\documentclass[leqno,11pt]{amsart}

\usepackage{amssymb,mathtools,amsthm}
\usepackage[latin1]{inputenc}
\usepackage{stackrel}
\usepackage[toc,page]{appendix}
\usepackage[english]{babel}
\usepackage[colorlinks, citecolor=blue]{hyperref}
\usepackage{enumerate, enumitem}
\usepackage{tikz}

\newcommand{\R}{\mathbb R}
\newcommand{\N}{\mathbb N}
\newcommand{\Z}{\mathbb Z}

\newcommand{\al}{\alpha}

\newcommand{\ve}{\varepsilon}
\newcommand{\vt}{\vartheta}
\newcommand{\vp}{\varphi}
\newcommand{\cerchio}{\mathbb{S}^{1}} 

\newcommand{\uguale}{\stackrel{.}{=}} 

\theoremstyle{plain}
\newtheorem{theorem}{Theorem}[section]
\newtheorem{lemma}[theorem]{Lemma}
\newtheorem{proposition}[theorem]{Proposition}
\newtheorem{corollary}[theorem]{Corollary}

\theoremstyle{definition}

\newtheorem{definition}[theorem]{Definition}
\newtheorem{remark}[theorem]{Remark}

\title{Scattering Dynamics and Chaotic Motions in a Relativistic Two-Centre Problem}
\author{Stefano Baranzini \and Gian Marco Canneori}

\date{}

\usepackage[top=3cm, bottom=3cm, left=2cm, right=2cm]{geometry}

\keywords{$N$-centre problem, chaos, symbolic dynamics, variational methods, relativistic dynamics}
\subjclass[2020] {
	70F10, 
	34C28, 
	70G75, 
	37B10, 
	37N05, 
	70H40 
}

\begin{document}
\begin{abstract}
We study the planar relativistic two-centre problem at fixed energy, including a class of perturbations of the Keplerian potential. Up to a reparametrisation of time, the relativistic dynamics is equivalent to a classical two-centre system with critical strong-force singularities. This reduction allows us to apply variational methods based on the Maupertuis functional.

We construct collision-free relativistic trajectories with prescribed symbolic itineraries and obtain a coding of the dynamics by admissible sequences. In particular, we prove the existence of bounded orbits, scattering solutions with prescribed asymptotic directions, and trapped trajectories that are asymptotically free in one time direction and exhibit prescribed symbolic behaviour in the other.
\end{abstract}

\maketitle


\section{Introduction}

	In mechanics, the classical planar $N$-centre problem describes the motion of a body $q$ of mass $m$ on the plane in the gravitational field generated by $N$ heavy bodies of mass 
	$m_i$ fixed at positions $c_i \in \mathbb{R}^2$ and the equation of motion reads
	\begin{equation}\label{eq-kep}
		m \ddot q = - \sum_{i=1}^N m_i \frac{q-c_i}{\vert q-c_i \vert^3}, \qquad q\in \mathbb{R}^2 \setminus \{c_1,\dots,c_n\}.
	\end{equation} 
	It is well known that, in several energy regimes, if $N\ge 3$, the dynamics of this system is highly complex. See for instance  \cite{SoaTer2012,BolKoz2017_2,BarCan,BarCanTer2021,ChenYu2020Heteroclinic}and \cite{Yu2016Periodic},  where more general type of interactions are also considered.
	
	However, the $N$-centre model cannot be used in practice to describe the motion of a comet or a satellite under the gravitational influence of $N$ massive bodies, since it ignores many contributions coming from their interaction. For instance, for $N=2$, more realistic models are given by the restricted three-body problems, in which the centres are not fixed, but the two primaries move along a trajectory of the two-body problem (typically a circular or elliptic Keplerian orbit).  In this case, the only interaction that is neglected is the one between the third body and the cluster of the primaries; see for instance \cite{book_moser}. On the other hand, without any reduction, the study of the dynamics of multi-particle systems remains highly challenging, even after imposing symmetry constraints or reducing the dimensionality of the system \cite{FerTer2003,BaCaTe25,BaCaTe26}.
	
	It should be noted that also the Kepler problem, for similar reasons, gives a somewhat incomplete description of planetary motions and, for instance, does not account for precession phenomena, which can actually be observed. 
	
	A general-relativistic description provides the appropriate setting to account for these deviations.
	Once all symmetries and integrals of motion are taken in account, the problem can be reduced to a one-dimensional nonlinear equation for the radial motion, in complete analogy with the classical Kepler problem, the main difference being an effective potential with a power-law correction of order $r^{-3}$ (see for instance
	\cite[Chapter~12.3~B]{GoPoSa02}).  Other corrections to the effective potential have been considered in the literature, leading to a superposition of inverse-power laws with different coefficients; see \cite{BoDa26,NuCaLl91,LaLlNu91} for an account.
	
	A different approach, inspired by special relativity, has been proposed in \cite{AnBa71,Bo04,Ji13,LeMoPP,MuPa06}.
	Instead of tuning the effective potential, one replaces the classical
	momentum by the relativistic one, i.e., sets
	\begin{equation*}
		p = \frac{m \dot{q}}{\sqrt{1-\frac{\vert\dot{q}\vert^2}{c^2}}}
	\end{equation*}
	and considers the resulting dynamics.
	In other words, one defines a modified \textit{Lagrangian}
	\[
	L(q,\dot{q}) =- m c^2\sqrt{1-\frac{\vert\dot{q}\vert^2}{c^2}} +\frac{\mu}{\vert q\vert}
	\]
	and then looks at the associated Euler-Lagrange equations. There is  a natural conserved quantity along solutions, given by the total energy, that reads
	\[
	h_{\mathrm{rel}}=\frac{mc^2}{\sqrt{1-\frac{\vert \dot{q} \vert^2}{c^2}}}- \frac{\mu}{\vert q\vert} 
	\]
	as shown in Section \ref{sec:equivalence}.
	Despite possible theoretical limitations, this model provides a genuine flow on the whole space $\left(\mathbb{R}^2 \setminus\{0\}\right)\times \mathbb{R}^2$.	
	This approach has been extensively applied in \cite{BDM,BoDaFe24,BoFePa25}. A key observation of these works is that the \textit{relativistic dynamics} of the model is actually orbit-equivalent to a classical one, the only difference being the choice of potential. For instance, the new effective potential for the Kepler problem reads
	\[
	 V^{\mathrm{eff}}(q) = -
	 \frac{mc^2}{2}+
	 \frac{1}{2m c^2}
	 \left(
	 h_\mathrm{rel}+\frac{\mu}{|q|}
	 \right)^2.
	\]
	One can observe that the leading singular term is, in the terminology of singular
	Hamiltonian systems, a (critical) strong-force singularity. Thus, the relativistic correction transforms the classical Kepler singularity into
	a potential for which the variational framework based on the Maupertuis
	functional becomes especially effective.
		
	Relativistic Kepler-type problems have been investigated from several
	complementary perspectives. Periodic and quasi-periodic motions in central
	relativistic time-dependent force fields were studied in \cite{ToUrZa13}. More
	recently, variational methods have been used to obtain periodic solutions for
	relativistic Kepler equations under external perturbations
	\cite{BoFePa25}, while prescribed-energy periodic solutions
	bifurcating from invariant tori have been constructed in
	\cite{BoDaFe24}. The reduction above explains why singular
	variational techniques are naturally related to these relativistic models.
	
	The same observation about the correspondence between classical and relativistic systems applies to $N$-centre interactions, which are the superposition of Kepler potentials with different sources. In this case, the dynamics of a test particle is governed by a potential  of the form
	\[
	U(q) = \sum_{i=1}^{N} \frac{\mu_i}{\vert q-c_i \vert},
	\]
	where $c_i\in \mathbb{R}^2$ are the sources and $\mu_i \in \mathbb{R}$ the charges or the masses. 
	Using a relativistic correction as before, one obtains the following effective potential
	\[
	U_h^{\mathrm{eff}}(q) = \frac{1}{2 mc^2} \sum_{i=1}^N \frac{\mu_i^2}{\vert q -c_i\vert^2} +\frac{1}{2 m c^2} \sum_{i=1}^N \frac{\mu_i}{\vert q-c_i\vert}\left(2 h_\mathrm{rel}+\sum_{j\ne i} \frac{\mu_j}{\vert q-c_j \vert}\right),
	\]
	which again behaves as $\vert q-c_i\vert^{-2}$ near the singularities $c_i$ and as $\vert q\vert^{-1}$ at infinity.
	
	In this paper, we focus on a version of the  relativistic two-centre problem with potential given by
	\begin{equation}
		\label{eq:relativistic_potential}
		V_{\mathrm{rel}}(q)
		=
		\frac{\mu_1}{|q-c_1|}
		+\frac{\mu_2}{|q-c_2|}
		+
		\mathcal R(q),
	\end{equation}
	where \(\mathcal R\) denotes a perturbation which is smooth on $\mathbb{R}^{2}\setminus\{c_1,c_2\}$, bounded away from the centres, and such that $\mathcal{R}(q)\vert q-c_i\vert \to 0$ as $q\to c_i$.  Moreover, we assume that there exist constants $C,R>0$ and $\beta>1$ such that
	\begin{equation}\label{hyp:potential_rel}
		\max\left\lbrace\lvert\langle \nabla \mathcal{R}(q), Jq\rangle\lvert, |\nabla \mathcal{R}(q)|\right\rbrace \le \frac{C}{|q|^{\beta}},\quad \forall\, |q|\ge R,
	\end{equation}
	where $J$ is the matrix representing a rotation of angle $\pi/2$. 
	The equation of motion is then given by
	\begin{equation}
		\label{eq:motion_equation_relativistic}
	\frac{d}{dt}\left(\frac{m\dot{q}}{\sqrt{1-\frac{\vert \dot{q}\vert^2}{c^2}}}\right) = \nabla V_{\mathrm{rel}}(q).
	\end{equation}
	Hence, after fixing the energy $h_{\mathrm{rel}}$, the relativistic two-centre problem is
	equivalent to a generalised two-centre problem with strong-force
	singularities (see Section \ref{sec:equivalence} for details). This is the setting considered in the present paper.  In particular, we will study the case 
	\begin{equation}
		\label{eq:range_energy}
	h_{\mathrm{rel}}-mc^2 > - \inf_{q \in \mathbb{R}^2\setminus\{c_1,c_2\}} V_{\mathrm{rel}}(q).
	\end{equation}
	It is worth pointing out explicitly that the relativistic correction has a very strong effect on the dynamics and produces a dynamical system with completely different features. The classical two-centre problem is analytically integrable; the relativistic one is not. 
		
	The application of variational methods to singular systems, which is the approach we will follow in the present paper, has a long history  \cite{ACZ1990,AmbCotZel1993}.
	Strong-force singularities are particularly useful because they rule out
	collisions in minimisation procedures and allow one to construct
	collision-free trajectories in prescribed homotopy classes. In the planar
	two-centre case, Felmer and Tanaka gave a detailed description of periodic and scattering orbits and symbolic dynamics through minimisation of the
	Maupertuis functional \cite{FeTa00}.

	They consider, however, a
	singular potential behaving like $\vert q\vert^{-\alpha}$  with $\alpha$ strictly larger than \(2\), whereas the
	relativistic reduction naturally yields the critical exponent \(2\). 
	The chaotic behaviour of the systems with $\alpha =2$ and $N=2$ for generalised \(2\)-centre systems, was established through variational methods and
	Levi--Civita regularisation in \cite{BolKoz2017}, with the proof strategy consisting in building an invariant compact subset containing all periodic minimisers.
	However, although the strong-force
	condition with $\alpha=2$ yields collision-free minimisers, it does not imply that these minimizers localize, in the configuration space, away from the singularities.	
    It is thus natural to wonder whether it is possible to build compact invariant sets that are arbitrarily close to the singularities, which should persist if the strength $\alpha$ of the singularity is slightly weakened but are eventually bound to break down as $\alpha$ decreases. In this paper, we intend to answer this question in the affirmative and prove the existence of large families of scattering, trapping and periodic solutions. 

\subsection*{Main results}
In this section we outline the main contributions of the paper to the study of the relativistic  2-centre problem and describe the types of  scattering, trapping and periodic solutions we construct. However, before stating the main results, we need to introduce some terminology and a measure of the complexity of solutions, which is given by a suitable coding.

 More precisely, we associate to suitable trajectories a symbolic itinerary recording the successive transverse intersections with two distinguished oriented curves $\Gamma_1,\Gamma_2$. In this way, the dynamics is encoded by bi-infinite or one-sided sequences in the alphabet $\{\pm1,\pm2\}$, where the sign represents the orientation of each crossing (see Figure \ref{fig:itinery}).
 
 \begin{figure}[h]
 	\centering
 \begin{tikzpicture}[x=0.75pt,y=0.75pt,yscale=-1,xscale=1]
 	
 	\draw [line width=1.5]    (66,75.5) .. controls (85,86) and (108.5,104.38) .. (132.25,116.38) .. controls (156,128.38) and (206,152.5) .. (226,137.5) .. controls (246,122.5) and (255,99.5) .. (221,100.5) ;
 	\draw [shift={(104.83,100.27)}, rotate = 213.11] [fill={rgb, 255:red, 0; green, 0; blue, 0 }  ][line width=0.08]  [draw opacity=0] (13.4,-6.43) -- (0,0) -- (13.4,6.44) -- (8.9,0) -- cycle    ;
 	\draw [shift={(184.83,138.35)}, rotate = 196.98] [fill={rgb, 255:red, 0; green, 0; blue, 0 }  ][line width=0.08]  [draw opacity=0] (13.4,-6.43) -- (0,0) -- (13.4,6.44) -- (8.9,0) -- cycle    ;
 	\draw [shift={(242.38,107.08)}, rotate = 76.4] [fill={rgb, 255:red, 0; green, 0; blue, 0 }  ][line width=0.08]  [draw opacity=0] (13.4,-6.43) -- (0,0) -- (13.4,6.44) -- (8.9,0) -- cycle    ;
 	\draw  [fill={rgb, 255:red, 155; green, 155; blue, 155 }  ,fill opacity=1 ][line width=1.5]  (221,100.5) .. controls (221,98.71) and (222.46,97.25) .. (224.25,97.25) .. controls (226.04,97.25) and (227.5,98.71) .. (227.5,100.5) .. controls (227.5,102.29) and (226.04,103.75) .. (224.25,103.75) .. controls (222.46,103.75) and (221,102.29) .. (221,100.5) -- cycle ;
 	\draw  [fill={rgb, 255:red, 155; green, 155; blue, 155 }  ,fill opacity=1 ][line width=1.5]  (262.5,174.5) .. controls (262.5,172.71) and (263.96,171.25) .. (265.75,171.25) .. controls (267.54,171.25) and (269,172.71) .. (269,174.5) .. controls (269,176.29) and (267.54,177.75) .. (265.75,177.75) .. controls (263.96,177.75) and (262.5,176.29) .. (262.5,174.5) -- cycle ;
 	\draw  [dash pattern={on 0.84pt off 2.51pt}]  (44,59.5) -- (66,75.5) ;
 	\draw [line width=1.5]    (370,95.5) .. controls (278,123.5) and (262,135.5) .. (233,153.5) .. controls (204,171.5) and (253,233.5) .. (273,218.5) .. controls (293,203.5) and (314,160.5) .. (269,174.5) ;
 	\draw [shift={(292.46,121.98)}, rotate = 338.3] [fill={rgb, 255:red, 0; green, 0; blue, 0 }  ][line width=0.08]  [draw opacity=0] (13.4,-6.43) -- (0,0) -- (13.4,6.44) -- (8.9,0) -- cycle    ;
 	\draw [shift={(238.13,203)}, rotate = 233.03] [fill={rgb, 255:red, 0; green, 0; blue, 0 }  ][line width=0.08]  [draw opacity=0] (13.4,-6.43) -- (0,0) -- (13.4,6.44) -- (8.9,0) -- cycle    ;
 	\draw [shift={(295.71,180.45)}, rotate = 95.11] [fill={rgb, 255:red, 0; green, 0; blue, 0 }  ][line width=0.08]  [draw opacity=0] (13.4,-6.43) -- (0,0) -- (13.4,6.44) -- (8.9,0) -- cycle    ;
 	\draw  [dash pattern={on 0.84pt off 2.51pt}]  (400,86.5) -- (370,95.5) ;
 	\draw    (79,142) .. controls (119,112) and (122,91.5) .. (162,61.5) .. controls (202,31.5) and (270,28.5) .. (300,56.5) .. controls (330,84.5) and (347,126.5) .. (350,155.5) .. controls (353,184.5) and (375,218.5) .. (295,258.5) .. controls (215,298.5) and (120,120.5) .. (145,95.5) .. controls (170,70.5) and (280,151.5) .. (314,168.5) .. controls (348,185.5) and (383,189.5) .. (452,163.5) ;
 	\draw [shift={(124.71,96.82)}, rotate = 130.53] [color={rgb, 255:red, 0; green, 0; blue, 0 }  ][line width=0.75]    (10.93,-3.29) .. controls (6.95,-1.4) and (3.31,-0.3) .. (0,0) .. controls (3.31,0.3) and (6.95,1.4) .. (10.93,3.29)   ;
 	\draw [shift={(237.06,37.18)}, rotate = 177.46] [color={rgb, 255:red, 0; green, 0; blue, 0 }  ][line width=0.75]    (10.93,-3.29) .. controls (6.95,-1.4) and (3.31,-0.3) .. (0,0) .. controls (3.31,0.3) and (6.95,1.4) .. (10.93,3.29)   ;
 	\draw [shift={(336.11,107.74)}, rotate = 244.31] [color={rgb, 255:red, 0; green, 0; blue, 0 }  ][line width=0.75]    (10.93,-3.29) .. controls (6.95,-1.4) and (3.31,-0.3) .. (0,0) .. controls (3.31,0.3) and (6.95,1.4) .. (10.93,3.29)   ;
 	\draw [shift={(342.73,224.09)}, rotate = 306.5] [color={rgb, 255:red, 0; green, 0; blue, 0 }  ][line width=0.75]    (10.93,-3.29) .. controls (6.95,-1.4) and (3.31,-0.3) .. (0,0) .. controls (3.31,0.3) and (6.95,1.4) .. (10.93,3.29)   ;
 	\draw [shift={(183.42,205.19)}, rotate = 52.58] [color={rgb, 255:red, 0; green, 0; blue, 0 }  ][line width=0.75]    (10.93,-3.29) .. controls (6.95,-1.4) and (3.31,-0.3) .. (0,0) .. controls (3.31,0.3) and (6.95,1.4) .. (10.93,3.29)   ;
 	\draw [shift={(238.69,123.37)}, rotate = 209.88] [color={rgb, 255:red, 0; green, 0; blue, 0 }  ][line width=0.75]    (10.93,-3.29) .. controls (6.95,-1.4) and (3.31,-0.3) .. (0,0) .. controls (3.31,0.3) and (6.95,1.4) .. (10.93,3.29)   ;
 	\draw [shift={(389.16,180.71)}, rotate = 173.27] [color={rgb, 255:red, 0; green, 0; blue, 0 }  ][line width=0.75]    (10.93,-3.29) .. controls (6.95,-1.4) and (3.31,-0.3) .. (0,0) .. controls (3.31,0.3) and (6.95,1.4) .. (10.93,3.29)   ;
 	\draw  [dash pattern={on 0.84pt off 2.51pt}]  (62,156.5) -- (79,142) ;
 	\draw  [dash pattern={on 0.84pt off 2.51pt}]  (452,163.5) -- (486,152.5) ;
 	
 	\draw (157,99.4) node [anchor=north west][inner sep=0.75pt]    {$\Gamma _{1}$};
 	\draw (299,128.4) node [anchor=north west][inner sep=0.75pt]    {$\Gamma _{2}$};
 	\draw (219,65.4) node [anchor=north west][inner sep=0.75pt]    {$c_{1}$};
 	\draw (257,183.4) node [anchor=north west][inner sep=0.75pt]    {$c_{2}$};
 	\draw (280,61) node [anchor=north west][inner sep=0.75pt]   [align=left] {$\displaystyle \gamma $};
 	
 \end{tikzpicture}
 
 \label{fig:itinery}
 \caption{The (finite)  itinerary of the curve $\gamma$ is $(1,2,1,-1,2)$}
 \end{figure}
\begin{definition}[Itinerary of a curve]\label{def:itinerary}
	Let $\mathcal{D}$ be a compact subset of $\R^2$ and let $\Gamma_1,\Gamma_2\subset\mathcal{D}$ be two oriented non-intersecting curves. Let $\gamma\colon I\to\mathcal{D}$ be an oriented curve such that
	\begin{itemize}
		\item every intersection with $\Gamma_1\cup\Gamma_2$ is isolated and transverse;
		\item $\gamma$ intersects $\Gamma_1\cup\Gamma_2$ infinitely many times in both time directions.
	\end{itemize}
	Let $(t_i)_{i\in\Z}$ be the strictly increasing sequence of intersection times such that $\gamma(t_i)\in\Gamma_1\cup\Gamma_2$. For each $i$, we define the symbol $s_i\in\{\pm 1, \pm 2\}$ as follows:
	\begin{itemize}
		\item $s_i = l$ if $\gamma$ intersects $\Gamma_{|l|}$ at time $t_i$ with sign $\text{sgn}(l)$, where the sign of intersection is determined by the orientation of the pair $(\dot{\gamma}, \dot{\Gamma}_{|l|})$.
	\end{itemize}
	Then, the sequence $(s_i)_{i\in\Z}$ is called the \emph{bi-infinite itinerary} of the curve $\gamma$.
	
	Analogous definitions can be given for a \emph{positively infinite itinerary} $(s_i)_{i\in\N}$ or a \emph{finite itinerary} $(s_i)_{i=1}^k$.
\end{definition}

\begin{definition}[Admissible sequences]\label{def:admissible_k}
	Fix $k\in\N$. We say that a bi-infinite sequence $(s_i)_{i\in\Z}\in\{\pm1, \pm2\}^\Z$ is $k$-\emph{admissible} if 
	\begin{enumerate}
		\item there are no blocks consisting of the same symbol with length larger than $k$ in the sequence,
		\item there are no patterns of the form $(\pm 1, \mp 1)$ or $(\pm 2, \mp 2)$,
		\item there exist $i\neq j$ such that $|s_i|= 1$ and $|s_j|= 2$. 
	\end{enumerate}
	The same definition applies to a positively infinite sequence $(s_i)_{i\in\N}$ and a finite sequence $(s_i)_{i=1}^n$, with $n\ge 2$. 
\end{definition}

Recall that  the Lagrangian $L$ of the relativistic problem is given by
\[
L(q,\dot{q}) = - m c^2\sqrt{1-\frac{\vert \dot{q}\vert^2}{c^2}} +V_\mathrm{rel}(q)
\]
which is convex in the velocity. Thus, we can adopt a Hamiltonian point of view, as detailed in Section  \ref{sec:equivalence}.  This implies that the quantity
\begin{equation}
	\label{eq:energy_relativistic}
	h_{\mathrm{rel}}=\frac{mc^2}{\sqrt{1-\frac{\vert \dot{q} \vert^2}{c^2}}}-V_\mathrm{rel}(q) 
\end{equation}
is conserved along solutions of \eqref{eq:motion_equation_relativistic} and thus, we may restrict our attention to the energy hypersurface given by \eqref{eq:energy_relativistic}. 

Our main results show that the dynamics of \eqref{eq:motion_equation_relativistic} for $h_{\mathrm{rel}}$ satisfying \eqref{eq:range_energy} contains a rich symbolic subsystem: every admissible sequence is realised by a non-collisional solution of the equation of motion in \eqref{eq:motion_equation_relativistic}. Conversely, the qualitative behaviour of solutions is completely described by their itinerary. The first result establishes a full coding of bounded-type dynamics and yields chaotic subsystems in the unperturbed case.

\begin{theorem}[Coding]\label{thm:coding_rel}  
	Fix $h_{\mathrm{rel}}$ satisfying \eqref{eq:range_energy} and $k\in\N$. Let $(s_i)_{i\in\Z}\in\{\pm 1, \pm 2\}^\Z$ be a bi-infinite $k$-admissible sequence. 
	Then, there exists a solution $q\colon \R\to\R^2\setminus\{c_1,c_2\}$ of the motion and energy equations \eqref{eq:motion_equation_relativistic}-\eqref{eq:energy_relativistic} such that  $q$ has bi-infinite itinerary $(s_i)_{i\in\Z}$.
	
	Moreover, for fixed $k \in \mathbb{N}$, the collection of such orbits is contained in a compact subset of $\mathbb{R}^2\setminus\{c_1,c_2\}$. In particular, the initial conditions of these trajectories are contained in the set
		\[
		\Sigma_{\mathrm{rel}} = \left((\Gamma_1 \cup \Gamma_2)\times \mathbb{R}^2\right)\cap \{H_\mathrm{rel}=h_\mathrm{rel}\}.
		\]
	 The first return map $\Phi : \Sigma_{\mathrm{rel}}\to \Sigma_{\mathrm{rel}}$ is semi-conjugated with a sub-shift of finite type. Moreover, in the unperturbed case $\mathcal{R}=0$, the semi-conjugation is a conjugation.
\end{theorem}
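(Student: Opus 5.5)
The plan is to transfer the problem to a classical setting and then build orbits by minimisation, following Felmer--Tanaka and Bolotin--Kozlov.

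\emph{Step 1 (reduction).} Using the equivalence of Section~\ref{sec:equivalence}, I rewrite the relativistic flow on $\{H_{\mathrm{rel}}=h_{\mathrm{rel}}\}$, up to a time reparametrisation, as the classical flow at energy $0$ of a Hamiltonian $\tfrac12|p|^2-U_h^{\mathrm{eff}}(q)$. For this Hamiltonian, \eqref{eq:range_energy} gives $U_h^{\mathrm{eff}}>0$ on $\R^2\setminus\{c_1,c_2\}$, and $U_h^{\mathrm{eff}}\sim \mu_i^2/(2mc^2|q-c_i|^2)$ near $c_i$. Since the reparametrisation preserves traces, orientations and transversality, it suffices to prove everything for this classical system. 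I then work with the Jacobi metric $U_h^{\mathrm{eff}}|dq|^2$, equivalently the Maupertuis functional $M(u)=\int_0^1|\dot u|^2\int_0^1 U_h^{\mathrm{eff}}(u)$. This metric is complete near the centres because the singularity is critical: in log-polar coordinates around $c_i$ it looks like a flat cylinder. I fix $\Gamma_1,\Gamma_2$ as short disjoint arcs, for instance the two halves of the segment $[c_1,c_2]$ with a neighbourhood of each centre removed, or arcs emanating from each $c_i$ toward the other.

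\emph{Step 2 (finite itineraries).} For a finite $k$-admissible word $(s_{-n},\dots,s_n)$, I minimise $M$ over curves from $\Gamma_{|s_{-n}|}$ to $\Gamma_{|s_n|}$ (free endpoints on the arcs) within the class of paths realising the word. The class is encoded as a homotopy class in the plane punctured at $c_1,c_2$, relative to the arcs. Coercivity comes from completeness of the metric together with the bound $|s_i|\le k$ on block lengths. The strong-force behaviour gives $M=+\infty$ on collision paths, so by the Gordon-type argument minimisers are collision-free. Conditions (2) and (3) of Definition~\ref{def:admissible_k} exclude homotopically trivial backtracking and pure winding around a single centre, which could otherwise let minimising sequences degenerate. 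A first-variation argument at the free endpoints, together with minimality, gives transversality and exactly the prescribed crossings.

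\emph{Step 3 (uniform localisation).} This is the key step and the main obstacle. I must show that all these minimisers stay in a fixed compact set $K_k\subset\R^2\setminus\{c_1,c_2\}$ depending only on $k$. At infinity I would use a Lagrange--Jacobi / convexity argument: since $U^{\mathrm{eff}}\sim C/|q|$ and \eqref{hyp:potential_rel} holds, $\frac{d^2}{dt^2}|q|^2>0$ for large $|q|$, which excludes excursions to infinity between crossings. Near $c_i$ the critical exponent is the difficulty, because the Jacobi length of a loop around $c_i$ is bounded below but does not blow up. The comparison argument I would try first is this. A minimiser winding around $c_i$ at most $k$ times between consecutive crossings can be compared, in log-polar coordinates around $c_i$, with a competitor truncated at radius $\delta$. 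In those coordinates the metric is asymptotically flat-cylindrical plus a term of order $e^{\rho}$ coming from the $1/|q-c_i|$ part. That positive correction makes going deeper strictly costlier, so by a maximum-principle/convexity argument for $\rho(t)$ the minimiser stays in $\{\rho\ge\log\delta_k\}$.

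\emph{Step 4 (limit and coding).} With the a priori bounds of Step~3, minimisers on windows $[-n,n]$ have uniformly bounded energy and stay in $K_k$ between successive crossings. A diagonal argument, using $C^1$ bounds from the energy equation, then gives a solution on $\R$ with itinerary $(s_i)$; transversality passes to the limit via the local uniqueness of minimisers in each homotopy piece. Taking the map from points of $\Sigma_{\mathrm{rel}}\cap K_k$ on these orbits to their itinerary intertwines the first return map $\Phi$ with the shift on the subshift of finite type defined by (1)--(2); it is continuous and onto, hence a semi-conjugacy. For $\mathcal R=0$, injectivity, and so the conjugacy, would follow from uniqueness of the minimiser in each class. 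I would prove this through the Bolotin--Kozlov hyperbolicity of the two-centre Jacobi metric, using negative Gaussian curvature of the regularised metric for $U^{\mathrm{eff}}$ without perturbation, so that geodesics in a fixed class are unique.
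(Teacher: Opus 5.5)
The near-centre localisation in your Step~3, which you rightly call the key step, rests on a mechanism with the wrong sign. In $\rho=\log|q-c_i|$ the Jacobi metric is $\Phi\,(d\rho^2+d\theta^2)$ with $\Phi\to\mu_i^2/(2mc^2)$. In the unperturbed case $\partial_\rho\Phi\approx A e^{\rho}$ with $A>0$. A conformal factor increasing in $\rho$ makes the deep region \emph{cheaper}, not costlier. Indeed, geodesics of $\Phi\,|dy|^2$ are zero-energy orbits of $\ddot y=\tfrac12\nabla\Phi$, so $\ddot\rho=\tfrac12\partial_\rho\Phi>0$. Hence every arc with both ends on $\partial B_r(c_i)$ dips strictly inside: the correction pulls geodesics \emph{towards} the centre, whereas in an exactly flat cylinder they would not dip at all. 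Convexity of $\rho$ therefore rules out interior maxima, which only keeps arcs inside the ball, and gives no lower bound on $\rho$, which is what you need. Moreover, for a general $\mathcal R$ with only $\mathcal R\,|q-c_i|\to0$ there is no $e^{\rho}$ term of definite sign. The depth is actually controlled by completeness of the cylinder played against the winding bound, quantitatively, as in Lemmas~\ref{lem:bounded_euclidean_length}--\ref{lem:compactness_minimisers}. First, tautness (Proposition~\ref{prop:taut_minimisers}) shows that a sub-arc of a minimiser inside $B_r(c_i)$ winds at most $k$ times. Second, the competitor running $k$ times along $\partial B_r$ has Jacobi length at most $2k\pi(C_{f_1}+rC_{W_1})$, uniformly in the endpoints. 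Third, using $2\sqrt{a+b}\ge\sqrt a+\sqrt b$ in polar coordinates and the uniform Euclidean-length bound, an arc reaching distance $a$ from $c_i$ has Jacobi length at least $c\log(r/a)-C$. Together these give $\log(r/a)\lesssim k$.

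The second gap is in your coding curves and the claim of \emph{exact} crossings. Arcs on the segment $[c_1,c_2]$, or emanating from each centre towards the other, do not leave a simply connected complement. For example, a simple loop in $[\al_1\al_2]$ surrounding the segment never meets them, although its word is $(1,2)$, so crossings do not read off the homotopy word. Even with cuts running from each centre to the outer boundary, your Step~2 claim is unfounded. The free-endpoint first variation only gives orthogonality at the two endpoints. A minimiser in a relative homotopy class may cross a non-geodesic arc extra times. The set of paths with a prescribed crossing sequence is not weakly closed, so you cannot minimise inside it. For the same reason, transversality and the crossing count need not survive your diagonal limit, and ``local uniqueness of minimisers'' does not prevent tangency with an arbitrary arc. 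The paper instead takes $\Gamma_i=\psi_i$, disjoint collision minimisers from $c_i$ to $\partial\mathcal D$ (Proposition~\ref{prop:collision_minimisers}). Being trajectories at the same energy, they meet other orbits transversally by ODE uniqueness. Being minimisers, they are in minimal position with periodic minimisers (no singular $2$-gons, Proposition~\ref{prop:intersection_minimisers}). So the crossing sequence of the minimiser in $[\gamma_{s_1,\ldots,s_m}]$ is exactly $(s_1,\ldots,s_m)$ (Lemma~\ref{lem:periodic_sequences}), and transversality is automatic in the limit. Your Lagrange--Jacobi confinement at infinity is a legitimate, simpler substitute for the paper's disk $\mathcal D$, and your negative-curvature route to injectivity is the one the paper uses.
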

Next, we turn to unbounded trajectories and scattering phenomena. In this case, solutions behave asymptotically like free particles at infinity, and one can prescribe their incoming and outgoing asymptotic directions, together with a finite symbolic coding in the interaction region.
\begin{theorem}[Scattering]\label{thm:scattering_bi_rel}
		Fix $h_{\mathrm{rel}}$ satisfying \eqref{eq:range_energy}, $n\in\N$ and $\theta^+, \theta^-\in[0,2\pi)$. Let $(s_i)_{i=1}^n\in\{\pm1, \pm2\}^n$ be a finite $(n+1)$-admissible sequence. 
	
	Then, there exists a solution $q\colon\R\to\R^2\setminus\{c_1,c_2\}$ of the motion and energy equations \eqref{eq:motion_equation_relativistic}-\eqref{eq:energy_relativistic} such that 
	\begin{itemize}
		\item $|q(t)|\to+\infty\quad\text{as}\ t\to\pm\infty$
		\item $\frac{q(t)}{|q(t)|}\to e^{i\theta^\pm}$ as $t\to\pm\infty$
	\end{itemize}
	and $q$ has finite itinerary $(s_i)_{i=1}^n$.
\end{theorem}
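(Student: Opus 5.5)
We will prove Theorem \ref{thm:scattering_bi_rel} by reducing it to the classical setting and then using a variational construction. The first step is the reduction to a classical problem at fixed energy. By Section \ref{sec:equivalence}, solutions of \eqref{eq:motion_equation_relativistic}--\eqref{eq:energy_relativistic} at energy $h_{\mathrm{rel}}$ are reparametrisations of solutions of a classical system $\ddot q=\nabla V(q)$ at a fixed energy $h$. Here
\[
V(q)=\frac{1}{2mc^2}\Bigl(h_{\mathrm{rel}}+V_{\mathrm{rel}}(q)\Bigr)^2-\frac{mc^2}{2}.
\]
Under \eqref{eq:range_energy} the Jacobi factor $h+V$ is bounded below by a positive constant, since $h_{\mathrm{rel}}+V_{\mathrm{rel}}>mc^2$. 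The potential $V$ has critical strong-force singularities $\sim\mu_i^2/(2mc^2|q-c_i|^2)$ at $c_i$. At infinity it behaves like a long-range Coulomb-type term plus a remainder; that remainder inherits \eqref{hyp:potential_rel} with exponent $\beta>1$. The reparametrisation preserves both the itinerary and the asymptotic directions, so it suffices to prove the statement for the classical system.

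The second step is the construction. Fix a large radius $R_0$ such that $\Gamma_1,\Gamma_2\subset B_{R_0}$. For large $L$, take the points $P_L^\pm=L e^{i\theta^\pm}$. Minimise the Maupertuis functional
\[
M(u)=\frac12\int_0^1|\dot u|^2\int_0^1\bigl(h+V(u)\bigr)
\]
over $H^1$ paths from $P_L^-$ to $P_L^+$ in the class of curves in $\R^2\setminus\{c_1,c_2\}$ whose reduced crossing word with $\Gamma_1\cup\Gamma_2$ is $(s_i)_{i=1}^n$. The admissibility conditions are what make this class well defined and nontrivial. The absence of patterns $(\pm l,\mp l)$ ensures the word is reduced, so it cannot be shortened by a homotopy. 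Condition (3), together with the bound $n+1$ on block length, ensures the class is a genuine homotopy class in the twice-punctured plane.

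Existence of a minimiser follows from coercivity (because $h+V\ge c_0>0$) and weak closedness of the class. Collisions are excluded by the strong-force condition, since the action blows up near a collision. Even at the critical exponent $2$ the standard argument applies: a collision with a centre carries infinite Jacobi length, because $\int |q-c_i|^{-1}\,d|q-c_i|$ diverges.

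After reparametrisation the minimiser $q_L$ is a classical solution at energy $h$. Since the class is fixed by transverse crossings, the minimiser realises exactly the itinerary $(s_i)$. The key extra point is to show that minimisers cross $\Gamma_i$ transversally and do not cross more often than the homotopy class requires. This uses the usual cut-and-paste argument: an extra crossing would create a bigon that could be removed, lowering the Jacobi length.

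The third step, the limit $L\to\infty$, is the main obstacle. We need uniform estimates. The part of $q_L$ inside $B_{R_0}$ must stay in a fixed compact set away from $c_1,c_2$; we obtain this by comparing with an explicit competitor whose length is $2L+O(1)$ in the Jacobi metric. The outer pieces must be almost radial, with angular variation controlled by $\int^\infty r^{-\beta}\,dr<\infty$; this is where \eqref{hyp:potential_rel} enters, bounding the change of angular momentum $\langle q,J\dot q\rangle$. Outside $B_{R_0}$ we use a Lagrange–Jacobi convexity argument, $\frac{d^2}{dt^2}|q|^2\ge 2h>0$, to get monotone escape.

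A diagonal Ascoli–Arzelà argument then produces a limit solution $q:\R\to\R^2\setminus\{c_1,c_2\}$ with $|q|\to\infty$ in both time directions and itinerary $(s_i)$. The delicate point is the asymptotic directions. The endpoint directions $\theta^\pm$ are prescribed at distance $L$, but the Coulomb tail produces an $O(1/L)$-independent deflection. We therefore choose the auxiliary endpoints after correcting by the asymptotic angle map. Alternatively, we prove directly that the limit satisfies $q(t)/|q(t)|\to e^{i\theta^\pm}$: the angular drift beyond radius $r$ tends to zero uniformly in $L$, by the angular momentum bound combined with the $r^{-\beta}$ decay. The last step is to undo the time reparametrisation to obtain the relativistic solution.
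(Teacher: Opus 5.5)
Your argument is sound in its main line, but it enforces the asymptotic directions by a different mechanism from the paper.

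\textbf{How the two routes differ.} The paper does not fix endpoints. For each $R$ it minimises among paths joining the lines $\ell^\pm$ tangent to $\partial B_R$ at $Re^{i\theta^\pm}$. At the endpoints $\dot\gamma_R$ is then orthogonal to $\ell^\pm$, i.e.\ parallel to $e^{i\theta^\pm}$. Lemma \ref{lem:flow}~iii) puts the asymptotic velocity of each approximant within $O(R^{1-\beta})$ of the prescribed direction, and this velocity information is carried to the $C^2_{loc}$ limit. Distance from the centres comes from the local winding argument of Lemma \ref{lem:compactness_minimisers}.

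You take fixed endpoints $q^\pm=Le^{i\theta^\pm}$, which satisfy \eqref{def:fixed_end}, so Theorem \ref{thm:collisionless_fixed_end} applies verbatim. You then control the \emph{position} angle instead. On the radially monotone outer legs, the angular momentum $\langle\dot q,Jq\rangle$ stays bounded by its value on $\partial B_{R_0}$ plus $O(R_0^{1-\beta})$; this is where the tangential part of \eqref{hyp:infinity} enters. Since $|q(t)|^2\ge r^2+a_0(t-t_r)^2$ after the time $t_r$ at which radius $r$ is reached, the angular drift beyond radius $r$ is $O(1/r)$ uniformly in $L$. This survives the limit as $|\arg q(t)-\theta^+|\lesssim 1/|q(t)|$.

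The paper's route gives quantitative information on the asymptotic velocities of the approximants. Its price is a free-boundary minimisation: existence with endpoints on unbounded lines, the orthogonality condition, and a homotopy convention for line-to-line paths. Yours stays inside the fixed-end framework and gives an explicit rate for the limit. Its price is angular-momentum bookkeeping and a global length comparison. That comparison, done correctly, also gives a uniform bound on the time spent in $B_{R_0}$. This is what guarantees the limit really escapes to infinity in both directions instead of staying trapped, and it should be stated explicitly in either approach.

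\textbf{Steps that need repair.}

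First, the competitor does not have Jacobi length $2L+O(1)$. After the reduction, $h+V$ in general has a long-range tail, e.g.\ the cross term $h_{\mathrm{rel}}(\mu_1+\mu_2)/(mc^2|q|)$. Each radial leg therefore costs $\sqrt{h+V(\infty)}\,L$ plus a correction that is unbounded in $L$, and a crude lower bound on the outer legs of $q_L$ then gives nothing. Compare instead with $\int_{R_0}^{L}\min_{|x|=r}\sqrt{h+V(x)}\,dr$, which bounds from below the Jacobi length of any path from $\partial B_{R_0}$ to $\partial B_L$. A radial segment exceeds this by at most $\int_{R_0}^{\infty}\mathrm{osc}_{\partial B_r}\sqrt{h+V}\,dr<\infty$. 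This is finite because the oscillation of $V$ on $\partial B_r$ is at most $2\pi\sup_{|x|=r}|\langle\nabla V(x),Jx\rangle|=O(r^{-\beta})$.

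Second, the $L$-independent Coulomb deflection that you propose to correct through an ``asymptotic angle map'' does not exist. Your own angular-momentum estimate rules it out, so drop that option and keep the direct argument.

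Third, crossing words with arcs $\Gamma_1,\Gamma_2\subset B_{R_0}$ are not homotopy invariant, since a path can slide around the free end of an arc. Minimise instead in the relative homotopy class $\hat{\mathcal H}_{q^\pm}(s_1,\ldots,s_n)$ of Section \ref{sec:fixed_end}, and read off the crossings afterwards. Also, for a word of length $n$ the block-length condition is vacuous, so it plays no role in making this class well defined.

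Finally, in your normalisation the classical energy is $h=0$, so the Lagrange--Jacobi bound cannot be $\ge 2h>0$. It must come from $h+V$: $\frac{d^2}{dt^2}|q|^2=4(h+V)+2\langle q,\nabla V\rangle\ge 4\inf(h+V)-o(1)>0$.
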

Finally, we construct entire solutions that are asymptotically free in one time direction and display a prescribed infinite symbolic behaviour in the other. These solutions provide a natural bridge between symbolic bounded-type dynamics and unbounded scattering trajectories.
\begin{theorem}[Trapping]\label{thm:trapped_relativistic}
		Fix $h_{\mathrm{rel}}$ satisfying \eqref{eq:range_energy}, $k\in\N$ and $\theta^-\in[0,2\pi)$. Let $(s_i)_{i\in\N}\in\{\pm1, \pm2\}^\N$ be a $k$-admissible sequence.
	
	Then, there exists a solution $q\colon\R\to\R^2\setminus\{c_1,c_2\}$ of the motion and energy equations \eqref{eq:motion_equation_relativistic}-\eqref{eq:energy_relativistic} such that
	\begin{itemize}
		\item $|q(t)|\to+\infty\quad\text{as}\ t\to-\infty$
		\item $\frac{q(t)}{|q(t)|}\to e^{i\theta^-}$ as $t\to-\infty$
	\end{itemize}
	and $q$ has positively infinite itinerary $(s_i)_{i\in\N}$.
\end{theorem}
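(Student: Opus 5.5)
The proof of Theorem \ref{thm:trapped_relativistic} will first reduce the relativistic problem at energy $h_{\mathrm{rel}}$ to a classical one, via the equivalence of Section \ref{sec:equivalence}. After a time reparametrisation, orbits of \eqref{eq:motion_equation_relativistic}-\eqref{eq:energy_relativistic} coincide with zero-energy orbits of a classical system $\ddot x=\nabla W(x)$ with
\[
W(x)=\frac{1}{2mc^2}\Big(h_{\mathrm{rel}}+V_{\mathrm{rel}}(x)\Big)^2-\frac{mc^2}{2}.
\]
This potential has critical strong-force singularities $\mu_i^2/(2mc^2|x-c_i|^2)$ at the centres. Condition \eqref{eq:range_energy} guarantees that $W$ is bounded below by a positive constant at infinity and that the Jacobi metric $W\,|dx|^2$ is nondegenerate. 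The itinerary and asymptotic-direction statements are invariant under reparametrisation, so it suffices to produce a classical solution of the Jacobi/Maupertuis problem.

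The construction will be a diagonal limit of half-scattering arcs. For each $n\in\N$, I would apply Theorem \ref{thm:scattering_bi_rel} to the finite sequence $(s_1,\dots,s_n)$, which is $k$-admissible and hence $(n+1)$-admissible for $n\ge k$. The incoming direction is $\theta^-$, and the outgoing direction $\theta^+_n$ is chosen freely. Alternatively, one could minimise the Maupertuis functional on the class of paths from direction $\theta^-$ at infinity, following the itinerary, and ending on $\Gamma_{|s_n|}$.

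Either way this gives Maupertuis minimisers $q_n$, reparametrised so that $q_n(0)$ is the first crossing with $\Gamma_{|s_1|}$. The key uniform estimates are as follows.
\begin{enumerate}
\item The portions of $q_n$ in the interaction region lie in a compact set $K\subset\R^2\setminus\{c_1,c_2\}$ independent of $n$. I would prove this with the same localisation argument used for the compactness statement in Theorem \ref{thm:coding_rel}, since the bound depends only on $k$ and not on the length of the sequence.
\item The incoming branch satisfies uniform asymptotic estimates. Using hypothesis \eqref{hyp:potential_rel} with $\beta>1$ and the positive energy at infinity, the radial velocity is bounded away from zero far out, so $|q_n(t)|\ge a|t|-b$ for $t\le 0$. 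The angular momentum then changes by an integrable amount, which gives convergence of $q_n(t)/|q_n(t)|$ to $e^{i\theta^-}$ uniformly in $n$.
\end{enumerate}

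With these bounds, Ascoli--Arzel\`a on compact time intervals together with a diagonal argument yields a limit $q$. Since $q_n$ stays away from the centres on each compact interval (by the compactness of $K$, and the outer part stays far), the convergence is $C^2$, so $q$ solves the equation with the same energy. The uniform asymptotic estimates pass to the limit and give $|q(t)|\to\infty$ and $q/|q|\to e^{i\theta^-}$ as $t\to-\infty$.

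The main obstacle is the itinerary of the limit $q$. I need that crossings with $\Gamma_1\cup\Gamma_2$ stay transverse and do not accumulate in the limit, and that no crossing is lost or created. My plan is to show a uniform lower bound on the transversality angle and on the time between consecutive crossings for minimisers with $k$-admissible itineraries contained in $K$. This should follow from minimality, since tangential or back-and-forth crossings (patterns $(\pm l,\mp l)$) could be shortcut to decrease the Jacobi length, combined with compactness of $K$. Given these bounds, crossings of $q_n$ converge to transverse crossings of $q$ with the same signs, and $q$ has positively infinite itinerary $(s_i)_{i\in\N}$. Finally, $q$ avoids the centres because $q([0,\infty))\subset K$ and the negative-time branch lies outside a large ball, together with the compact transition piece in between.
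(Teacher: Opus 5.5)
Your overall architecture matches the paper's. You reduce via Section~\ref{sec:equivalence}, approximate by minimisers with one end escaping along $\theta^-$ and realising the truncations $(s_1,\dots,s_n)$, keep a uniform distance from the centres via Lemma~\ref{lem:compactness_minimisers}, and take a $C^2_{\mathrm{loc}}$ diagonal limit. Your bounded-angular-momentum argument for the incoming direction is a clean alternative to the paper's velocity comparison through Lemma~\ref{lem:flow}.

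However, the step ``either way this gives Maupertuis minimisers'' fails for both of your constructions. Theorem~\ref{thm:scattering_bi_rel} only asserts the existence of a solution. In the paper that solution is a $C^2_{\mathrm{loc}}$ limit of minimisers, not itself a minimiser. So neither the localisation argument of Lemma~\ref{lem:compactness_minimisers} nor any comparison argument applies to it unless you reopen that proof, for instance by passing the distance bound and local minimality to the limit. Your alternative, minimising over paths arriving from infinity along $\theta^-$, is ill-posed: $h+V$ is bounded below by a positive constant, so every such path has infinite Jacobi length. The paper avoids both problems by truncating. It minimises among paths from the line tangent to $\partial B_R$ at $Re^{i\theta^-}$ to a fixed point, realising $\sigma_m$, and lets $R=m\to\infty$. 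The direction is then recovered from Lemma~\ref{lem:flow}, since these minimisers leave the line orthogonally.

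The second gap is in the itinerary step, which you rightly single out. Lower bounds on crossing angles and on the spacing of crossings rule out accumulation, but not the loss of crossings at $t=+\infty$. If the $i$-th crossing time of $q_n$ diverges along a subsequence, the limit realises only $(s_1,\dots,s_{i-1})$. You need a uniform \emph{upper} bound on the time between consecutive crossings. You also need one on the time between entering the interaction region and the first crossing, which your estimate $|q_n(t)|\ge a|t|-b$ for $t\le 0$ and your angular-momentum bound tacitly use.

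This is where minimality really enters. By Lemma~\ref{lem:boundedness_minimisers}, the arc between two consecutive crossings lies in $\bar{\mathcal D}\setminus(\Gamma_1\cup\Gamma_2)$, which is simply connected. So it can be compared with a competitor in the same class whose Jacobi length is uniformly bounded. Along a physical-time solution,
\[
\mathcal L_h=\sqrt2\int(h+V)\,dt\ge\sqrt2\,\inf(h+V)\,\Delta t ,
\]
which bounds the elapsed time; this is the finiteness of $a_r$ in the proof of Theorem~\ref{thm:coding}.

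For the angle bound, ODE uniqueness is cleaner than a shortcut argument. A tangency of the limit with $\psi_j$ would force $q$ to coincide with $\psi_j$ up to time shift and reversal. That contradicts $q$ staying in a compact set away from the centres forward in time and escaping to infinity backward in time.

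Finally, a truncation $(s_1,\dots,s_n)$ is guaranteed to contain both moduli, as condition (3) of admissibility requires, only for $n\ge k+1$, not for $n\ge k$.
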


\subsection*{Outline of the paper}

The paper is organised as follows. In Section \ref{sec:2_center_strong_force} we reformulate the relativistic problem as a classical two-center problem with strong-force.
In Section \ref{sec:topology} we introduce the topological framework and the combinatorial language used to encode homotopy classes of curves. In Section \ref{sec:variational} we set up the variational framework used throughout the paper. Sections \ref{sec:fixed_end} and \ref{sec:periodic} respectively study fixed-end minimisers and periodic minimisers of the Maupertuis functional, subject to the topological constraint imposed by a fixed admissible sequence. In Section \ref{sec:compactness} we provide a qualitative description of minimisers: in particular, we show that they remain at a bounded distance from the centres once the maximum number of turns around each centre is fixed. In Section \ref{sec:coding}, we introduce the symbolic framework and we prove Theorem \ref{thm:coding_rel} via an approximation argument that combines existence of periodic minimisers and compactness. Finally, Section \ref{sec:scattering} is devoted to scattering and trapped trajectories. We first analyse the asymptotic behaviour of solutions at infinity and prove Theorem \ref{thm:scattering_bi_rel}. These results are then combined with those in Section \ref{sec:coding} to prove Theorem \ref{thm:trapped_relativistic}.

\section{The high-energy two-center problem}
\label{sec:2_center_strong_force}
We have already pointed out the connection between the relativistic and classical problems, that will be discussed in detailed in Section \ref{sec:equivalence}. Here, we first focus on the classical two-center problem. 
More precisely, we fix two points $c_1, c_2\in\R^2$ and study a generalised and perturbed version of a 2-homogeneous two-centre problem. We thus introduce a potential function $V\in\mathcal{C}^2(\R^2\setminus\{c_1,c_2\})$, and we assume that there exists $r\in(0, |c_1-c_2|/2)$ such that
\begin{equation}\label{eq:potential}
V(x)=\frac{f_j(x)}{\lvert x-c_j\rvert^2}+ W_j(x),\quad\text{for any}\ x\in B_j=B_r(c_j),
\end{equation}
where $f_j$ is a positive, continuous function on $B_j$ that is smooth on $B_j\setminus\{c_j\}$, and $W_j$ is smooth and bounded on $B_j$. Additionally, we require that $V$ is bounded away from the centres, i.e., 
\[
\sup\limits_{x\in \R^2\setminus\{B_1\cup B_2\}} V(x)<+\infty.
\]
Moreover, to control its behaviour at infinity, we require that there exist $C, K>0$ and $\beta >1$ such that
\begin{equation}\label{hyp:infinity}
        \max\left\lbrace\lvert\langle \nabla V(x), Jx\rangle\lvert, |\nabla V(x)|\right\rbrace \le \frac{C}{|x|^{\beta}},\quad \forall\, |x|\ge K,
\end{equation}
where $J$ is the matrix representing a rotation rotation of angle $\pi/2$ . Note that assumption \eqref{hyp:infinity} guarantees that the dynamics is asymptotically free at infinity and allows one to define asymptotic directions for unbounded trajectories. 

Under these assumptions, we study the equation of motion
\begin{equation}\label{eq:motion}
\ddot{x}=\nabla V(x),
\end{equation}
whose classical solutions $x\colon I\to\R^2\setminus\{c_1,c_2\}$ satisfy the energy equation
\begin{equation}\label{eq:energy}
\frac12\lvert\dot{x}(t)\rvert^2-V(x(t))=h,\quad\text{for any}\ t\in I.
\end{equation}
For the purposes of this paper, we focus on the high-energy case in which 
\begin{equation}\label{hyp:energy}
    h > -\inf_{\R^2} V, 
\end{equation}
ensuring that $V+h >0$ throughout the configuration space. In this paper, we look for subsystems of \eqref{eq:motion} that display symbolic dynamics and we study scattering phenomena for unbounded solutions. 

We recall that fixed-energy classical solutions are also minimising geodesics of the \emph{Jacobi-Maupertuis metric}
\[
g_h(v,v) = (h+V(x))\lvert v\rvert^2,\quad v\in T_x\,\R^2.
\] 
For this reason, collision-free solutions of the two-centre problem can be found among critical points of the so-called \emph{Maupertuis functional}, which reads
\[
\mathcal{M}_h(x)=\frac12\int_0^1\lvert\dot{x}\rvert^2\int_0^1\left[V(x)+h\right].
\]
In particular, one could look for minimisers of $\mathcal{M}$ within homotopy classes of $\R^2\setminus\{c_1,c_2\}$ and use additional arguments to prove that they are collision-free, thus obtaining a critical point. 

In this setting we can formulate statements analogous to Theorems \ref{thm:coding_rel}, \ref{thm:scattering_bi_rel} and \ref{thm:trapped_relativistic}. We have thus the following results.

\begin{theorem}[Coding]\label{thm:coding}  
    Fix $h$ satisfying \eqref{hyp:energy} and $k\in\N$. Let $(s_i)_{i\in\Z}\in\{\pm 1, \pm 2\}^\Z$ be a bi-infinite $k$-admissible sequence. 
    Then, there exists a solution $x\colon \R\to\R^2\setminus\{c_1,c_2\}$ of the motion and energy equations \eqref{eq:motion}-\eqref{eq:energy} such that 
    $x$ has bi-infinite itinerary $(s_i)_{i\in\Z}$.

    Moreover, for fixed $k\in \mathbb{N}$, the collection of such orbits is contained in a compact subset of $\mathbb{R}^2\setminus\{c_1,c_2\}$. In particular  the initial conditions of these trajectories are contained in the set
    	\[
    	\Sigma = \left((\Gamma_1 \cup \Gamma_2)\times \mathbb{R}^2\right)\cap \{H=h\}.
    	\]
    	The first return map $\Phi : \Sigma\to \Sigma$ is semi-conjugated with a sub-shift of finite type. Moreover, in the unperturbed case $\mathcal{R}=0$, the semi-conjugation is a conjugation.
\end{theorem}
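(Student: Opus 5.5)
The plan is to reduce the bi-infinite problem to periodic ones and pass to the limit, using the Maupertuis functional $\mathcal{M}_h$ on homotopy classes of $\R^2\setminus\{c_1,c_2\}$ singled out by finite admissible words. First, the curves $\Gamma_1,\Gamma_2$ are fixed concretely: small arcs (or segments) issuing from $c_j$, disjoint and non-intersecting, chosen so that crossing sequences encode homotopy classes. A finite $k$-admissible word $(s_1,\dots,s_n)$, extended periodically, determines a free homotopy class of loops. The absence of cancelling pairs $(\pm l,\mp l)$ makes the word reduced, so the class is nontrivial. Condition (3) guarantees the loop winds around both centres, so it is not a power of a small loop around a single centre. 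I would minimise $\mathcal{M}_h$ in the $H^1$-closure of this class. By \eqref{hyp:energy} we have $h+V>0$, so $\mathcal{M}_h$ is coercive on loops of positive length. The critical strong-force behaviour $f_j|x-c_j|^{-2}$ then makes the action of any sequence approaching a collision blow up, more precisely a logarithmic divergence in the Jacobi length. Hence minimisers are collision-free, and after reparametrisation they are periodic solutions of \eqref{eq:motion}--\eqref{eq:energy}. A standard cut-and-paste argument in the Jacobi metric, using that geodesics minimise and that bigons can be removed, shows the minimiser crosses $\Gamma_1\cup\Gamma_2$ transversally and exactly according to the word, i.e.\ its itinerary is the periodic extension of $(s_i)$.

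The key step, and the main obstacle, is a uniform localisation estimate. For fixed $k$ I want $\delta=\delta(k)>0$ and $R_0=R_0(k)$ such that every such periodic minimiser, for every admissible word of every length, stays in $\{\delta\le |x-c_j|,\ |x|\le R_0\}$. Boundedness from above should follow by comparison: a long excursion far away can be replaced by a shorter path hugging a fixed circle around both centres in the same homotopy class, and \eqref{hyp:infinity} controls the potential. The lower bound is the real difficulty, since with $\alpha=2$ the strong force alone does not prevent arbitrarily close approaches. Here I would blow up near $c_j$ in polar/log coordinates $\rho=\log|x-c_j|$. In these coordinates the Jacobi metric is approximately $(f_j(c_j)+o(1))(d\rho^2+d\vartheta^2)$, which is an almost flat cylinder. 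The key fact is that a geodesic arc making at most $k$ consecutive turns around $c_j$ (bounded angular variation $\le 2\pi k+O(1)$) and minimising between entry and exit points on $\partial B_r(c_j)$ is nearly a straight segment on the cylinder. Its depth $|\rho|$ is therefore bounded in terms of $k$. Making this rigorous requires controlling $f_j$ near $c_j$ (continuity at $c_j$, positivity) and the perturbation $W_j$. This is exactly why the bound degenerates as $k\to\infty$.

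With uniform localisation, I take a bi-infinite word $(s_i)$ and the truncations $(s_{-N},\dots,s_N)$. These may need a fixed bounded patch so that the periodic extension remains $k'$-admissible with $k'$ depending only on $k$, for instance by inserting a symbol of the other centre. I take the corresponding periodic solutions $x_N$, translated in time so that $x_N(0)$ lies at the crossing labelled $s_0$. Energy conservation and localisation give uniform $C^2$ bounds, so Ascoli yields a subsequence converging in $C^1_{loc}$ to an entire solution $x$ inside the compact set. Transversality at crossings is uniform: at a tangency the local minimality would be violated, or, more simply, the limit of uniformly transverse crossings stays transverse by compactness plus the cylinder estimate. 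So crossings persist, and the itinerary of $x$ is $(s_i)$. Finally, the compact invariant set of all such orbits meets $\Sigma$. Assigning to each point its itinerary gives a continuous map to the subshift of finite type on $\{\pm1,\pm2\}$ defined by conditions (1)--(2), intertwining $\Phi$ with the shift. This map is surjective by the above, which gives the semi-conjugacy. In the unperturbed case, injectivity would come from uniqueness of the minimiser in each class. I would prove this via hyperbolicity: the metric $(h+V)|dx|^2$ with pure inverse-square singularities has negative curvature near the centres, and a curvature/no-conjugate-points argument then excludes two distinct geodesics with the same bi-infinite itinerary.
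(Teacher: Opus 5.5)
Your overall architecture matches the paper's. It uses periodic Maupertuis minimisers in the classes $[\gamma_{s_1,\dots,s_m}]$ and the logarithmic strong-force blow-up to exclude collisions. It keeps arcs with at most $k$ turns uniformly away from $c_j$ by comparing with the competitor along $\partial B_r(c_j)$, which is Lemma \ref{lem:bound_collisions}. It then passes from periodic truncations to $C^2_{loc}$ limits, and your junction patch for the truncations is fine.

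The genuine gap is the choice of $\Gamma_1,\Gamma_2$.
\begin{itemize}
\item \emph{Small arcs do not encode homotopy classes.} Your own localisation keeps all orbits at distance $\ge\delta$ from $c_j$, so arcs shorter than $\delta$ are never crossed. $\Gamma_j$ must reach the outer boundary of the region containing the orbits.
\item \emph{Euclidean segments break the cut-and-paste step.} Removing a bigon between the minimiser $\gamma$ and $\Gamma_j$ by a length argument needs \emph{both} sides of the bigon to be Jacobi-minimising in the same relative class. A Jacobi geodesic can cross a straight segment back and forth, or be tangent to it. So the crossing sequence of your periodic minimisers may contain cancelling pairs $(\pm l,\mp l)$ or tangencies, and the itinerary need not be the word.
\item \emph{The same defect undermines transversality for the limit orbit.} The claim ``at a tangency local minimality would be violated'' is false when $\Gamma_j$ is not itself a trajectory, and segments give no uniform transversality estimate.
\end{itemize}

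The paper closes this by taking $\Gamma_j=\psi_j$ to be \emph{collision minimisers}. These are energy-$h$ solutions joining a point of $\partial\mathcal{D}$ to $c_j$, minimising on every compact sub-arc among paths homotopic to the segment, with $\psi_1\cap\psi_2=\emptyset$. Their existence is a separate step you would need to add, namely Proposition \ref{prop:collision_minimisers}:
\begin{itemize}
\item take fixed-end minimisers from a fixed point to $q_n\to c_j$, parametrised by Jacobi arclength;
\item the bound $\mathcal{L}_h\ge C\log(r/|q_n|)$ gives $T_n\to\infty$ and a uniform distance from $c_j$ on each $[0,S]$;
\item conclude with Ascoli--Arzel\`a and a diagonal argument.
\end{itemize}
With this choice, any intersection of a collision-free solution with $\psi_j$ is transverse by uniqueness for the Cauchy problem at fixed energy: a tangency would force the orbit to coincide with $\psi_j$, which collides. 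Bigons are excluded as in Proposition \ref{prop:intersection_minimisers}, since both sides would be minimisers between the same endpoints in the same class. Hence the crossings are exactly the reduced cyclic word, and they persist under $C^1_{loc}$ limits.

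Two secondary points.
\begin{itemize}
\item \emph{Your upper bound is not uniform.} Replacing a far excursion by an arc of a fixed circle can fail. Admissible words such as $(1,2,1,2,\dots)$ allow excursions winding many times around both centres. Also $h+V$ can be larger on the inner circle (it is in the relativistic case), so the replacement need not be shorter. Use instead the barrier $\varphi\in[\al_1\al_2]$ of Lemma \ref{lem:boundedness_minimisers}. Alternatively, use the Lagrange--Jacobi convexity $\frac{d^2}{dt^2}\frac{|x|^2}{2}\ge a_0>0$ outside a large ball (proof of Lemma \ref{lem:flow}), which forbids a maximum of $|x|$ there.
\item \emph{For the conjugacy, negative curvature near the centres is not enough.} There the leading inverse-square term is flat: it is exactly your cylinder. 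The sign comes from lower-order terms, and Gauss--Bonnet needs $\kappa_J<0$ on the whole region swept by the orbits. This is the global computation of $-\Delta\log(h'+V_{\mathrm{rel}})$ in the paper's final remark.
\end{itemize}
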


\begin{theorem}[Scattering]\label{thm:scattering_bi}
    Fix $h$ satisfying \eqref{hyp:energy}, $n\in\N$ and $\theta^+, \theta^-\in[0,2\pi)$. Let $(s_i)_{i=1}^n\in\{\pm1, \pm2\}^n$ be a finite $(n+1)$-admissible sequence. 

    Then, there exists a solution $x\colon\R\to\R^2\setminus\{c_1,c_2\}$ of the motion and energy equations \eqref{eq:motion}-\eqref{eq:energy} such that 
    \begin{itemize}
    \item $|x(t)|\to+\infty\quad\text{as}\ t\to\pm\infty$;
    \item $\frac{x(t)}{|x(t)|}\to e^{i\theta^\pm}$ as $t\to\pm\infty$.
    \end{itemize}
    and 
   Moreover,
    $x$ has finite itinerary $(s_i)_{i=1}^n$.
\end{theorem}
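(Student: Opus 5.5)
The plan is to build the scattering solution as a limit of fixed-end minimisers of the Maupertuis functional, with endpoints sent to infinity along the prescribed directions.

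\emph{Step 1: approximating problems.} For $R$ large, set $p_R^\pm = R e^{i\theta^\pm}$. Using the combinatorial language of Section~\ref{sec:topology}, fix the homotopy class of paths in $\R^2\setminus\{c_1,c_2\}$ from $p_R^-$ to $p_R^+$ whose crossings with $\Gamma_1\cup\Gamma_2$ follow the word $(s_i)_{i=1}^n$. Minimise $\mathcal{M}_h$ in the $H^1$-closure of this class; this is the fixed-end setting of Section~\ref{sec:fixed_end}. The strong-force behaviour $f_j|x-c_j|^{-2}$ makes the functional blow up on collisions, and hypothesis \eqref{hyp:energy} gives $V+h>0$. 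Together these give coercivity and a collision-free minimiser $x_R$. After Jacobi reparametrisation, $x_R$ solves \eqref{eq:motion}-\eqref{eq:energy}.

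\emph{Step 2: compactness near the centres.} Apply Section~\ref{sec:compactness}. The sequence is $(n+1)$-admissible, so the number of consecutive windings around each centre is bounded by $n+1$. Hence the portions of $x_R$ lying in a fixed ball $B_K$ stay at a distance $\delta>0$ from $c_1,c_2$, with $\delta$ independent of $R$.

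\emph{Step 3: uniform control at infinity.} For $|x|\ge K$ the potential satisfies \eqref{hyp:infinity}. From this I would derive:
\begin{itemize}
\item a Lagrange--Jacobi type convexity estimate $\frac{d^2}{dt^2}|x|^2 \ge 2h - O(|x|^{-\beta+1})>0$, so each $x_R$ leaves $B_K$ only through two monotone branches;
\item angular momentum bounds, using the control of $\langle\nabla V,Jx\rangle$, so that $\frac{d}{dt}(x\wedge\dot x)=O(|x|^{-\beta+1})$ is integrable along the branches, since $|x|$ grows linearly.
\end{itemize}
It follows that the direction of each branch converges, uniformly in $R$, to a limit that differs from the direction at the endpoint by $O(R^{1-\beta})$. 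I would also check, by minimality, that the outer branches do not cross $\Gamma_1\cup\Gamma_2$ once $K$ is large enough that $\Gamma_j\subset B_K$. This gives a uniform bound on the time $x_R$ spends in $B_K$.

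\emph{Step 4: passage to the limit.} Translate time so that $x_R(0)\in B_K$. By Steps 2--3 and Ascoli--Arzel\`a on compact time intervals, extract $x_R\to x$ in $C^2_{loc}$. The limit $x$ solves \eqref{eq:motion}-\eqref{eq:energy} and is collision-free by the $\delta$-bound.

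Transversality of crossings is preserved in the limit, since transversality is an open condition and a tangency would contradict uniqueness for the ODE together with the topological constraint. Hence the itinerary $(s_i)_{i=1}^n$ passes to $x$. The uniform asymptotics of Step 3 give $|x(t)|\to\infty$ and $x/|x|\to e^{i\theta^\pm}$.

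\emph{Main obstacle.} I expect Step 3 to be the hardest: obtaining asymptotic-direction control that is uniform in $R$, so that the limit genuinely escapes along $\theta^\pm$ rather than losing the prescribed direction. I would try first a comparison with free motion, estimating the deviation of $\dot x$ by $\int |\nabla V|\,dt \lesssim \int |x|^{-\beta}\,dt$, which converges because $|x|\gtrsim t$ and $\beta>1$.
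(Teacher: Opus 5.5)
Your argument is correct in outline, but it imposes the asymptotic directions differently from the paper.

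\textbf{How the paper does it.} The paper does not fix endpoints. For each $R$ it minimises over paths joining the lines $\ell^\pm$ tangent to $\partial B_R$ at $Re^{i\theta^\pm}$. The free-endpoint condition makes $\gamma_R$ orthogonal to $\ell^\pm$, so $\dot\gamma_R$ is parallel to $e^{i\theta^\pm}$ there. Lemma~\ref{lem:flow}~iii) then gives $\dot\gamma_R(+\infty)=\sqrt{2h}\,e^{i\theta^+}+O(R^{1-\beta})$, and symmetrically at $-\infty$. The limit inherits this through the velocity estimate $|\dot\gamma_R(t)-\dot\gamma_R(+\infty)|\lesssim|\gamma_R(t)|^{1-\beta}$ at intermediate radii. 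No angular momentum is needed.

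\textbf{How you do it.} You fix $p_R^\pm=Re^{i\theta^\pm}$, which falls exactly under Theorem~\ref{thm:collisionless_fixed_end} since $|p_R^+|=|p_R^-|$. You then read the direction off the position. The decisive estimate is a uniform bound $|L|\le L_1$ on $L=\langle\dot x_R,Jx_R\rangle$ along the outer branches, i.e.\ a bounded impact parameter. Let $\vartheta_R$ denote the polar angle of $x_R$. Combining $\dot\vartheta_R=L/|x_R|^2$ with $|x_R(s)|^2\ge\rho^2+a_0(s-t)^2$ gives $|\vartheta_R(t)-\theta^+|\le CL_1/\rho$ whenever $|x_R(t)|=\rho$ on the outgoing branch. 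This holds uniformly in $R\ge\rho$ and passes to the $C^2_{loc}$ limit.

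\textbf{Trade-off.} The paper's device gives velocity information for free. The price is minimising with endpoints sliding on non-compact lines, plus the transversality condition, and Section~\ref{sec:fixed_end} covers neither. Your route stays inside the proven fixed-end theory, at the cost of the angular-momentum estimate. You also make explicit the uniform bound on the time spent in $B_K$, which the paper's proof uses only implicitly. Without that bound the limit could lose part of the itinerary or fail to escape.

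\textbf{Two points to repair.}
\begin{enumerate}
\item The torque is exactly $\frac{d}{dt}(x\wedge\dot x)=\langle\nabla V(x),Jx\rangle$, which is $O(|x|^{-\beta})$ directly by \eqref{hyp:infinity}. Your $O(|x|^{1-\beta})$ is the crude bound $|x|\,|\nabla V|$. That bound is not integrable against $|x|\gtrsim t$ when $1<\beta\le2$, which is precisely why the hypothesis controls $\langle\nabla V,Jx\rangle$ separately.
\item The bound on the time in $B_K$ does not follow from the outer branches avoiding $\Gamma_1\cup\Gamma_2$. The inner arc is a fixed-end minimiser between points of $\partial B_K$. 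Its relative class ranges over a finite set, because the outer branches have bounded angular sweep. A competitor therefore gives a uniform bound on its Jacobi length, and $\mathcal{L}_h=\sqrt2\int(h+V)\,dt\ge\sqrt2\,(h+\inf V)\cdot(\text{time})$ converts this into a time bound.
\end{enumerate}

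\textbf{A caveat shared with the paper.} Both arguments take for granted that the approximating minimisers cross $\Gamma_1\cup\Gamma_2$ exactly according to $(s_i)$. For the alternating words of Lemma~\ref{lem:boundedness_minimisers}~iii) this fails: those minimisers avoid $\bar{\mathcal D}\supset\Gamma_1\cup\Gamma_2$. These words therefore need separate treatment in either approach.
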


\begin{theorem}[Trapping]\label{thm:trapped}
    Fix $h$ satisfying \eqref{hyp:energy}, $k\in\N$ and $\theta^-\in[0,2\pi)$. Let $(s_i)_{i\in\N}\in\{\pm1, \pm2\}^\N$ be a $k$-admissible sequence.
    
    Then, there exists a solution $x\colon\R\to\R^2\setminus\{c_1,c_2\}$ of the motion and energy equations \eqref{eq:motion}-\eqref{eq:energy} such that
    \begin{itemize}
        \item $|x(t)|\to+\infty\quad\text{as}\ t\to-\infty$;
        \item $\frac{x(t)}{|x(t)|}\to e^{i\theta^-}$ as $t\to-\infty$.
    \end{itemize}
    Moreover,
    $x$ has positively infinite itinerary $(s_i)_{i\in\N}$.
\end{theorem}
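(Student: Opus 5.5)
We will prove Theorem \ref{thm:trapped} by a diagonal limiting argument that glues a scattering-type incoming branch to a bounded coded tail. The two ingredients are the fixed-end and periodic minimisers of the Maupertuis functional in prescribed homotopy classes, and the asymptotic analysis at infinity used for Theorem \ref{thm:scattering_bi}. The key a priori input is the compactness result of Section \ref{sec:compactness}: minimisers whose words have at most $k$ consecutive equal symbols stay at uniform distance $\delta_k>0$ from $c_1,c_2$ while in the interaction region.

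\textbf{Finite approximations.} For each $n\in\N$ consider the truncated word $(s_1,\dots,s_n)$, which is still $k$-admissible. Choose $n$ so that both symbols $1$ and $2$ appear in absolute value; this is possible for large $n$, since an infinite $k$-admissible sequence must eventually alternate. Let $p_n$ be a point on the appropriate curve $\Gamma_{|s_n|}$; after passing to a subsequence we may fix this choice. Let $R_m\to\infty$, and minimise $\mathcal M_h$ over paths starting at $R_m e^{i\theta^-}$ and ending at $p_n$ in the homotopy class determined by $(s_1,\dots,s_n)$, using the fixed-end results of Section \ref{sec:fixed_end}.

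As $m\to\infty$ we extract, exactly as in the proof of Theorem \ref{thm:scattering_bi}, a half-infinite solution $x_n$ on $(-\infty,T_n]$. It satisfies $|x_n(t)|\to\infty$ and $x_n/|x_n|\to e^{i\theta^-}$ as $t\to-\infty$, and it has itinerary $(s_1,\dots,s_n)$. The asymptotic-direction control should come from hypothesis \eqref{hyp:infinity}, which makes the angular momentum almost conserved outside a large ball and gives uniform convergence of the direction.

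\textbf{Uniform bounds and the limit.} Normalise time so that $x_n(0)\in\Gamma_{|s_1|}$ is the first crossing. Uniform $C^1$ bounds on compact time intervals follow from energy conservation \eqref{eq:energy}, since $V$ is bounded away from the centres, together with the uniform distance $\delta_k$ from the centres supplied by Section \ref{sec:compactness}. The excursion to infinity needs a separate argument: the incoming branch is a minimiser from infinity, so its length inside the region $|x|\le K$ is bounded by comparison with an explicit competitor. Hence the time $t_1$ at which the incoming branch first enters the ball $|x|\le K$ is bounded uniformly in $n$.

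Ascoli–Arzelà with a diagonal extraction then gives $x_n\to x$ in $C^2_{loc}(\R)$. The limit $x$ solves \eqref{eq:motion}–\eqref{eq:energy} and is collision-free by the $\delta_k$ bound. Because crossings of $\Gamma_1\cup\Gamma_2$ by the $x_n$ are transverse with a uniform lower bound on the angle, which holds since the $x_n$ avoid the centres and have a uniform speed bound, the crossing structure passes to the limit. Thus $x$ has positively infinite itinerary $(s_i)_{i\in\N}$. Finally, the uniform asymptotic estimates on the incoming branches, uniform in $n$ because the entry time and the state at entry into $|x|\ge K$ are controlled, show that $x$ has asymptotic direction $\theta^-$ as $t\to-\infty$.

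\textbf{Main obstacle.} The hardest step is ruling out degenerations of the sequence $x_n$. Two things can go wrong: the incoming branch may wander unboundedly, or the limit may lose crossings, for example by picking up extra crossings or spiralling. I would first try to control this with the minimality comparison above, concatenating a radial ray with a fixed short path. After that, I would invoke the topological characterisation of Section \ref{sec:topology} to show that the limit's homotopy word on each finite window agrees with that of $x_n$, using again the uniform distance from the centres. The statement of Theorem \ref{thm:trapped_relativistic} then follows via the equivalence of Section \ref{sec:equivalence}.
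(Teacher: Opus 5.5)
Your proposal is correct and follows essentially the same route as the paper: minimisers with one end pushed to infinity along $e^{i\theta^-}$ and realising longer and longer truncations of $(s_i)_{i\in\N}$, the uniform distance from the centres given by Lemma~\ref{lem:compactness_minimisers}, a $C^2_{\mathrm{loc}}$ limit, the asymptotic direction via Lemma~\ref{lem:flow}, and the itinerary via the argument of Theorem~\ref{thm:coding}. Your variations are minor: you take the limits $m\to\infty$ and then $n\to\infty$ successively rather than along the paper's diagonal $R=m$, you start from the point $R_me^{i\theta^-}$ rather than the tangent line, and you normalise time at the first crossing rather than at first entry into $B_K$; the successive limits even make the direction at $-\infty$ cleaner, since each $x_n$ already has exact direction $\theta^-$ and the bounds of Lemma~\ref{lem:flow} are uniform in $n$. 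One correction, though: the transversality of crossings in the limit should be justified by uniqueness for \eqref{eq:motion} (the $\Gamma_i$ are themselves energy-$h$ trajectories running into the centres) together with the collision-freeness of $x$, not by the speed bound.
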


\subsection{Equivalence between classical and relativistic version of the problem}
\label{sec:equivalence}

In this section we show that, up to a time-change, the two problems are indeed equivalent.
Recall that  the Lagrangian $L$ of the relativistic problem is given by
\[
L(q,v) = - m c^2\sqrt{1-\frac{\vert v\vert^2}{c^2}} +V_{\mathrm{rel}}(q), \quad V_{\mathrm{rel}}(q)=\frac{\mu_1}{|q-c_1|}+\frac{\mu_2}{|q-c_2|}+\mathcal R(q)
\]
which is convex in the velocity. Thus, we can adopt a Hamiltonian point of view, introduce the conjugate momentum  $	p = \partial_v L ={m v}/{\sqrt{1-\frac{\vert v\vert^2}{c^2}}}$ and write down the corresponding Hamiltonian $H_\mathrm{rel}$. This energy is then a conserved quantity along the motion  and, therefore, we work on an energy hyper-surface $\{H_{\mathrm{rel}} =h_{\mathrm{rel}}\}$ with $h_{\mathrm{rel}}$ satisfying \eqref{eq:range_energy}.  

First of all, let us observe that
\[
{\frac{\vert p\vert^2}{m^2 c^2}}  =  \frac{\vert v\vert^2/c^2}{(1-\frac{\vert v \vert^2}{c^2})} = \frac{\vert v\vert^2/c_2-1+1}{(1-\frac{\vert v \vert^2}{c^2})} = \frac{1}{1-\frac{\vert v \vert^2}{c^2}}-1
\]
and thus, after performing the Legendre transform of the Lagrangian, we obtain the following  expression for  the Hamiltonian $H_{\mathrm{rel}}$:
\begin{align*}
	H_{\mathrm{rel}}(p,q) &= \sup_{v:\vert v\vert <c}\left(\langle p,v\rangle - L(q,v)\right) = mc^2 \left(\frac{\vert v \vert ^2}{c^2\sqrt{1-\frac{\vert v\vert^2}{c^2}}}+\sqrt{1-\frac{\vert v \vert^2}{c^2}} \right)-V_{\mathrm{rel}}(q)\\
	& = \frac{mc^2}{\sqrt{1-\frac{\vert v \vert^2}{c^2}}}-V_{\mathrm{rel}}(q) = mc^2\sqrt{1+\frac{\vert p \vert^2}{m^2 c^2}}-V_{\mathrm{rel}}(q). 
\end{align*}
This implies, as observed in \cite{BDM,BoDa26}, that the quantity
\begin{equation*}
	h_{\mathrm{rel}}=\frac{mc^2}{\sqrt{1-\frac{\vert \dot{q} \vert^2}{c^2}}}-V_{\mathrm{rel}}(q) 
\end{equation*}
is conserved along solutions of \eqref{eq:motion_equation_relativistic}.
If we fix a level $h_{\mathrm{rel}}$ for the energy of the system, we have then that
\[
(h_{\mathrm{rel}}+V_{\mathrm{rel}}(q))^2 = (mc^2)^2\left(1+\frac{\vert p \vert^2}{m^2 c^2}\right) 
\]
and so the following relation holds
\[
\left(mc^2\sqrt{1+\frac{\vert p \vert^2}{m^2 c^2}}-V_{\mathrm{rel}}(q)-h_{\mathrm{rel}}\right)\left(mc^2\sqrt{1+\frac{\vert p \vert^2}{m^2c^2}}+V_{\mathrm{rel}}(q)+h_{\mathrm{rel}}\right)=
0.
\]
This means that, when $V_{\mathrm{rel}}(q)+h _{\mathrm{rel}}\ge 0$ the hypersurface $\{H_{\mathrm{rel}} = h_{\mathrm{rel}}\}$ can also be written as the zero-level set of the Hamiltonian
\[
K(p,q) =\frac{\vert p\vert^2}{2 m }-\frac{\left({V_{\mathrm{rel}}(q)+h_{\mathrm{rel}}}\right)^2}{2 mc^2}+\frac{m c^2}{2}
\]
This, equivalently, amounts to considering the energy level $-mc^2/2$ of the Hamiltonian $H$ defined as
\begin{equation}
	\label{eq:new_hamiltonian}
H(p,q) =   \frac{\vert p\vert^2}{2 m }-\frac{\left({V_{\mathrm{rel}}(q)+h_{\mathrm{rel}}}\right)^2}{2 mc^2}.
\end{equation}
Since Theorems \ref{thm:coding}-\ref{thm:trapped} apply when the Hill region has no boundary, we have to impose that 
\[
\frac{\left({V_{\mathrm{rel}}(q)+h_{\mathrm{rel}}}\right)^2}{2 mc^2}-\frac{mc^2}{2} =0, \text{ i.e., } \vert V_{\mathrm{rel}}(q)+h_{\mathrm{rel}}\vert = mc^2
\]
has no solution.
Since we are also assuming $h_{\mathrm{rel}}+V_{\mathrm{rel}}(q)> mc^2$ (compare with \eqref{eq:range_energy}), we conclude that the boundary of the Hill region for the two-center problem is empty. We now show that the conditions given in \eqref{eq:potential} are satisfied by the potential $V_\mathrm{rel}$ if the ones in \eqref{hyp:potential_rel} are.

\begin{lemma}
	Under conditions  \eqref{hyp:potential_rel} and \eqref{eq:energy_relativistic}, the Hamiltonian given in \eqref{eq:new_hamiltonian} corresponds to a perturbation of the two-center problem as in \eqref{eq:motion}. Moreover, assumptions  \eqref{hyp:infinity} and  \eqref{eq:energy}  are satisfied. 
	\begin{proof}
		If we consider $(V_\mathrm{rel}+h_\mathrm{rel})^2/2mc^2$, we obtain the following potential
		\[
		V(q) =  \frac{1}{2mc^2} \left( h_{\mathrm{rel}} + \frac{ \mu_1}{\vert q-c_1\vert}+ \frac{\mu_2}{\vert q-c_2\vert}+\mathcal{R}(q)\right)^2.
		\]
		
		We have already verified that the energy condition \eqref{hyp:energy} holds provided that \eqref{eq:range_energy} does. 
		
		Clearly the potential $V$ has the form \eqref{eq:potential} for a suitable choice of functions $f_j $  and $W_j$ and the potential, far off from the centers is bounded as well. We just need to verify the assumption \eqref{hyp:infinity}. Let us fix  a ball of radius $R>0$ containing the centers $c_1$ and $c_2$. There exist constants $C_1,C_2>0$ such that, for $R$ large enough
		\[
		\vert \nabla V_\mathrm{rel}\vert \le C_1\frac{\mu_1+\mu_2}{\vert q\vert^2}+\vert \nabla \mathcal{R}\vert \le C_2 \frac{1}{\vert q\vert^{\min\{\beta,2\}}},
		\]
		since the perturbation $\mathcal{R}$ satisfies 
		\(\vert \nabla \mathcal{R}(q)\vert\le C \vert q\vert^{-\beta}
		\)
		with $\beta>1$. This, together with condition \eqref{eq:range_energy}, implies that the gradient of $V$ satisfies
		\[
			\vert \nabla V \vert  = \frac{1}{mc^2} (V_\mathrm{rel}+h_\mathrm{rel})\vert \nabla V_\mathrm{rel}\vert \le C_3 \frac{1}{\vert q\vert^{\min\{\beta,2\}}}.
		\]
		Let us show now that, $\langle \nabla V(q),J q \rangle$ has the same rate of decay. We have that
		\[
		\langle J q,\nabla \left(\frac{\mu_1}{\vert q-c_i\vert}\right)\rangle = \langle Jq, \frac{q-c_i}{\vert q-c_i\vert^3}\rangle = -\frac{\langle c_i,q\rangle}{\vert q-c_i\vert^3}
		\]
		and so there exists a constant $C_4$ such that
		\[
		\Big\vert \langle J q,\nabla \left(\frac{\mu_1}{\vert q-c_i\vert}\right)\rangle \Big\vert  \le \frac{
		C_4}{\vert q\vert^2} 
		\]
		Thus, thanks to \eqref{hyp:potential_rel}, we can conclude that there exists a constant $C_5>0$ such that
		\[
		\vert\langle \nabla V(q),J q \rangle\vert  =2(V_\mathrm{rel}(q)+h_\mathrm{rel})\vert \langle \nabla V_\mathrm{rel}(q),Jq\rangle \vert \le C_5 \vert q\vert^{-\beta'} 
		\]		
		for some $\beta'>1$ and thus \eqref{eq:potential} is satisfied.
	\end{proof}
\end{lemma}

\section{Topological and variational framework}

\subsection{Homotopy classes of curves in the punctured plane}\label{sec:topology}

Here, we briefly recall the topological structure of the punctured plane and introduce the combinatorial language used throughout the paper to encode homotopy classes of curves. The configuration space $\R^2\setminus\{c_1,c_2\}$ is path-connected and homotopically equivalent to a wedge of two circles. Therefore, its fundamental group is the free group with two generators corresponding to counterclockwise loops around $c_1$ and $c_2$, denoted by $\alpha_1$ and $\alpha_2$.

For the purposes of this paper, we will employ generators in a convenient way to encode free homotopy classes of loops. It is  well known that homotopy classes correspond to conjugacy classes in $\pi_1(\mathbb{R}\setminus\{c_1,c_2\})$ or, equivalently, that they can be represented by a reduced word in the alphabet (see Figure \ref{fig:generators})
\begin{figure}[t]
    \centering
	\resizebox{.4\textwidth}{!}{\tikzset{every picture/.style={line width=0.75pt}} 

\begin{tikzpicture}[x=0.75pt,y=0.75pt,yscale=-1,xscale=1]

\draw  [color={rgb, 255:red, 2; green, 54; blue, 118 }  ,draw opacity=1 ] (96,130.5) .. controls (96,100.4) and (120.4,76) .. (150.5,76) .. controls (180.6,76) and (205,100.4) .. (205,130.5) .. controls (205,160.6) and (180.6,185) .. (150.5,185) .. controls (120.4,185) and (96,160.6) .. (96,130.5) -- cycle ;
\draw  [color={rgb, 255:red, 2; green, 54; blue, 118 }  ,draw opacity=1 ] (206,130.5) .. controls (206,100.4) and (230.4,76) .. (260.5,76) .. controls (290.6,76) and (315,100.4) .. (315,130.5) .. controls (315,160.6) and (290.6,185) .. (260.5,185) .. controls (230.4,185) and (206,160.6) .. (206,130.5) -- cycle ;
\draw  [fill={rgb, 255:red, 128; green, 128; blue, 128 }  ,fill opacity=1 ][line width=2.25]  (146.86,131.18) .. controls (146.46,129.18) and (147.76,127.25) .. (149.77,126.87) .. controls (151.78,126.5) and (153.73,127.81) .. (154.14,129.82) .. controls (154.54,131.82) and (153.24,133.75) .. (151.23,134.13) .. controls (149.22,134.5) and (147.27,133.19) .. (146.86,131.18) -- cycle ;
\draw  [fill={rgb, 255:red, 128; green, 128; blue, 128 }  ,fill opacity=1 ][line width=2.25]  (256.86,131.18) .. controls (256.46,129.18) and (257.76,127.25) .. (259.77,126.87) .. controls (261.78,126.5) and (263.73,127.81) .. (264.14,129.82) .. controls (264.54,131.82) and (263.24,133.75) .. (261.23,134.13) .. controls (259.22,134.5) and (257.27,133.19) .. (256.86,131.18) -- cycle ;
\draw  [color={rgb, 255:red, 2; green, 54; blue, 118 }  ,draw opacity=1 ] (118.23,93.09) -- (106.9,98.24) -- (109.58,86.09) ;
\draw  [color={rgb, 255:red, 2; green, 54; blue, 118 }  ,draw opacity=1 ] (239.11,86.5) -- (226.75,87.97) -- (232.99,77.2) ;
\draw  [fill={rgb, 255:red, 0; green, 0; blue, 0 }  ,fill opacity=1 ][line width=2.25]  (203.74,130.93) .. controls (203.48,129.68) and (204.29,128.48) .. (205.54,128.24) .. controls (206.79,128.01) and (208.01,128.83) .. (208.26,130.07) .. controls (208.52,131.32) and (207.71,132.52) .. (206.46,132.76) .. controls (205.21,132.99) and (203.99,132.17) .. (203.74,130.93) -- cycle ;

\draw (152.5,133.5) node [anchor=north west][inner sep=0.75pt]   [align=left] {$\displaystyle c_{1}$};
\draw (262.5,133.5) node [anchor=north west][inner sep=0.75pt]   [align=left] {$\displaystyle c_{2}$};
\draw (96,173) node [anchor=north west][inner sep=0.75pt]  [color={rgb, 255:red, 2; green, 54; blue, 118 }  ,opacity=1 ] [align=left] {$\displaystyle \alpha _{1}$};
\draw (296,174) node [anchor=north west][inner sep=0.75pt]  [color={rgb, 255:red, 2; green, 54; blue, 118 }  ,opacity=1 ] [align=left] {$\displaystyle \alpha _{2}$};

\end{tikzpicture}}
	\caption{The generators of the fundamental group $\pi_1(\R^2\setminus\{c_1,c_2\})$.}\label{fig:generators}
\end{figure}
\begin{equation}\label{rem:free_group}
    \{\al_1^{\pm1}, \al_2^{\pm1}\}.
\end{equation}
Here reduced means that no pattern of the form $\alpha_1^{\pm} \alpha_1^{\mp}$ or $\alpha_2^\pm \alpha_2^{\mp}$ can appear and finite words are identified up to cyclic shift.

For a given loop $\gamma$, we denote its homotopy class as $[\gamma]$. A loop is said to be \emph{contractible} if it is homotopic to a point. Finally, we introduce the operation of \emph{concatenation} bewteen any two paths in $\R^2\setminus\{c_1,c_2\}$. 
\begin{definition}
    For two paths $\gamma\colon[t_0,t_1]\to\R^2\setminus\{c_1,c_2\}$ and $\phi\colon[\tau_0,\tau_1]\to\R^2\setminus\{c_1,c_2\}$, if $\gamma(t_1)=\phi(\tau_0)$ we can define their \emph{concatenation} as the path $\gamma\#\phi\colon[t_0,\tau_1]\to\R^2\setminus\{c_1,c_2\}$. 
\end{definition}
For the purposes of this paper, we need to define the geometric self-intersections number of paths. Note that we will always refer to \emph{unsigned} intersections. Our main reference is \cite{HasSco1985}.
\begin{definition}[Intersection number]
    For two loops $\gamma,\phi$ on $\R^2\setminus\{c_1,c_2\}$, we define their \emph{intersection number} as
    \[
    \lvert\gamma\cap\phi\rvert = \lvert \left\lbrace(t_1,t_2)\in\cerchio\times\cerchio:\,\gamma(t_1)=\phi(t_2)\right\rbrace\rvert\in\N\cup\{+\infty\}.
    \]
    For a loop $\gamma$ on $\R^2\setminus\{c_1,c_2\}$, we define its \emph{self-intersection number} as 
    \[
    \lvert\gamma\rvert = \frac12\lvert\left\lbrace (t_1,t_2)\in\cerchio\times\cerchio:\,t_1\neq t_2,\ \gamma(t_1)=\gamma(t_2)\right\rbrace\rvert\in\N\cup\{+\infty\}.
    \]
\end{definition}

Note that the two numbers in the previous definition are always finite when we consider \emph{general position} curves, i.e., curves which have only transversal intersections.

\begin{definition}[Taut loops]\label{def:taut}
    Given two loops $\gamma,\phi$ in $\R^2\setminus\{c_1,c_2\}$ we say that they are in \emph{minimal position} if they realise the minimum number of intersections in their homotopy classes $[\gamma], [\phi]$. 
    
    Similarly, a loop $\gamma$ is said to be in \emph{minimal position} or \emph{taut} if it realises the minimum number of self-intersections in its homotopy class $[\gamma]$. 
\end{definition}

Minimal position representatives can be assumed to have only transverse intersections, since tangential intersections can be removed by homotopies which decrease the number of intersections. However, a loop in general position does not generally realise the minimum number of self-intersections. This can be achieved by performing a finite sequence of \emph{moves} that remove the redundant self-intersections corresponding to \emph{singular 1-gons} or \emph{2-gons} (see Figure \ref{fig:gons}).
\begin{figure}[t]
    \centering
	\resizebox{.85\textwidth}{!}{\tikzset{every picture/.style={line width=0.75pt}} 

\begin{tikzpicture}[x=0.75pt,y=0.75pt,yscale=-1,xscale=1]

\draw    (81.16,36.04) .. controls (84.12,61.06) and (101.45,147.24) .. (135.69,151.63) .. controls (169.93,156.03) and (202.22,125.79) .. (180.55,112.43) .. controls (167.67,104.49) and (131.15,106.19) .. (98.22,110.49) .. controls (75.72,113.43) and (54.9,117.58) .. (44.44,120.7) ;
\draw  [dash pattern={on 0.84pt off 2.51pt}]  (81.16,36.04) .. controls (79.17,21.58) and (79.8,31.83) .. (79.53,12.3) ;
\draw  [dash pattern={on 0.84pt off 2.51pt}]  (44.44,120.7) .. controls (34.24,124.26) and (35.71,124.73) .. (30.33,124.32) ;
\draw  [fill={rgb, 255:red, 0; green, 0; blue, 0 }  ,fill opacity=1 ] (122.41,83.8) .. controls (122.41,82.09) and (123.8,80.7) .. (125.52,80.7) .. controls (127.23,80.7) and (128.62,82.09) .. (128.62,83.8) .. controls (128.62,85.52) and (127.23,86.91) .. (125.52,86.91) .. controls (123.8,86.91) and (122.41,85.52) .. (122.41,83.8) -- cycle ;
\draw  [fill={rgb, 255:red, 155; green, 155; blue, 155 }  ,fill opacity=1 ] (123.56,83.8) .. controls (123.56,82.72) and (124.44,81.85) .. (125.52,81.85) .. controls (126.6,81.85) and (127.47,82.72) .. (127.47,83.8) .. controls (127.47,84.89) and (126.6,85.76) .. (125.52,85.76) .. controls (124.44,85.76) and (123.56,84.89) .. (123.56,83.8) -- cycle ;
\draw    (344,157.71) .. controls (357.33,70.71) and (431.33,44.71) .. (594.33,102.71) ;
\draw    (328.33,111.71) .. controls (427.33,189.71) and (543.33,101.71) .. (583.33,71.71) ;
\draw  [dash pattern={on 0.84pt off 2.51pt}]  (314.33,96.71) -- (328.33,111.71) ;
\draw  [dash pattern={on 0.84pt off 2.51pt}]  (598.33,61.71) -- (583.33,71.71) ;
\draw  [dash pattern={on 0.84pt off 2.51pt}]  (594.33,102.71) -- (611.33,108.71) ;
\draw  [dash pattern={on 0.84pt off 2.51pt}]  (344,157.71) -- (341,178) ;
\draw [color={rgb, 255:red, 163; green, 0; blue, 16 }  ,draw opacity=1 ]   (110,126) -- (127,109) ;
\draw [color={rgb, 255:red, 163; green, 0; blue, 16 }  ,draw opacity=1 ]   (114,134) -- (137,111) ;
\draw [color={rgb, 255:red, 163; green, 0; blue, 16 }  ,draw opacity=1 ]   (120,140) -- (149,111) ;
\draw [color={rgb, 255:red, 163; green, 0; blue, 16 }  ,draw opacity=1 ]   (127,146) -- (163,110) ;
\draw [color={rgb, 255:red, 163; green, 0; blue, 16 }  ,draw opacity=1 ]   (138,148) -- (172,114) ;
\draw [color={rgb, 255:red, 163; green, 0; blue, 16 }  ,draw opacity=1 ]   (151,148) -- (182,117) ;
\draw [color={rgb, 255:red, 163; green, 0; blue, 16 }  ,draw opacity=1 ]   (166,145) -- (185,126) ;
\draw [color={rgb, 255:red, 163; green, 0; blue, 16 }  ,draw opacity=1 ]   (107,119) -- (115,111) ;
\draw [color={rgb, 255:red, 74; green, 144; blue, 226 }  ,draw opacity=1 ]   (356,123) -- (395,84) ;
\draw [color={rgb, 255:red, 74; green, 144; blue, 226 }  ,draw opacity=1 ]   (363,128) -- (411,80) ;
\draw [color={rgb, 255:red, 74; green, 144; blue, 226 }  ,draw opacity=1 ]   (371,132) -- (427,76) ;
\draw [color={rgb, 255:red, 74; green, 144; blue, 226 }  ,draw opacity=1 ]   (380,136) -- (441,75) ;
\draw [color={rgb, 255:red, 74; green, 144; blue, 226 }  ,draw opacity=1 ]   (390,139) -- (454,75) ;
\draw [color={rgb, 255:red, 74; green, 144; blue, 226 }  ,draw opacity=1 ]   (403,139) -- (466,76) ;
\draw [color={rgb, 255:red, 74; green, 144; blue, 226 }  ,draw opacity=1 ]   (418,136) -- (477,77) ;
\draw [color={rgb, 255:red, 74; green, 144; blue, 226 }  ,draw opacity=1 ]   (428,137) -- (487,78) ;
\draw [color={rgb, 255:red, 74; green, 144; blue, 226 }  ,draw opacity=1 ]   (442,137) -- (499,80) ;
\draw [color={rgb, 255:red, 74; green, 144; blue, 226 }  ,draw opacity=1 ]   (455,136) -- (510,81) ;
\draw [color={rgb, 255:red, 74; green, 144; blue, 226 }  ,draw opacity=1 ]   (469,132) -- (517,84) ;
\draw [color={rgb, 255:red, 74; green, 144; blue, 226 }  ,draw opacity=1 ]   (485,126) -- (525,86) ;
\draw [color={rgb, 255:red, 74; green, 144; blue, 226 }  ,draw opacity=1 ]   (503,118) -- (534,87) ;
\draw [color={rgb, 255:red, 74; green, 144; blue, 226 }  ,draw opacity=1 ]   (519,112) -- (541,90) ;
\draw [color={rgb, 255:red, 74; green, 144; blue, 226 }  ,draw opacity=1 ]   (540,100) -- (548,92) ;
\draw  [fill={rgb, 255:red, 0; green, 0; blue, 0 }  ,fill opacity=1 ] (59.41,159.8) .. controls (59.41,158.09) and (60.8,156.7) .. (62.52,156.7) .. controls (64.23,156.7) and (65.62,158.09) .. (65.62,159.8) .. controls (65.62,161.52) and (64.23,162.91) .. (62.52,162.91) .. controls (60.8,162.91) and (59.41,161.52) .. (59.41,159.8) -- cycle ;
\draw  [fill={rgb, 255:red, 155; green, 155; blue, 155 }  ,fill opacity=1 ] (60.56,159.8) .. controls (60.56,158.72) and (61.44,157.85) .. (62.52,157.85) .. controls (63.6,157.85) and (64.47,158.72) .. (64.47,159.8) .. controls (64.47,160.89) and (63.6,161.76) .. (62.52,161.76) .. controls (61.44,161.76) and (60.56,160.89) .. (60.56,159.8) -- cycle ;
\draw  [fill={rgb, 255:red, 0; green, 0; blue, 0 }  ,fill opacity=1 ] (541.41,72.8) .. controls (541.41,71.09) and (542.8,69.7) .. (544.52,69.7) .. controls (546.23,69.7) and (547.62,71.09) .. (547.62,72.8) .. controls (547.62,74.52) and (546.23,75.91) .. (544.52,75.91) .. controls (542.8,75.91) and (541.41,74.52) .. (541.41,72.8) -- cycle ;
\draw  [fill={rgb, 255:red, 155; green, 155; blue, 155 }  ,fill opacity=1 ] (542.56,72.8) .. controls (542.56,71.72) and (543.44,70.85) .. (544.52,70.85) .. controls (545.6,70.85) and (546.47,71.72) .. (546.47,72.8) .. controls (546.47,73.89) and (545.6,74.76) .. (544.52,74.76) .. controls (543.44,74.76) and (542.56,73.89) .. (542.56,72.8) -- cycle ;
\draw  [fill={rgb, 255:red, 0; green, 0; blue, 0 }  ,fill opacity=1 ] (383.41,171.8) .. controls (383.41,170.09) and (384.8,168.7) .. (386.52,168.7) .. controls (388.23,168.7) and (389.62,170.09) .. (389.62,171.8) .. controls (389.62,173.52) and (388.23,174.91) .. (386.52,174.91) .. controls (384.8,174.91) and (383.41,173.52) .. (383.41,171.8) -- cycle ;
\draw  [fill={rgb, 255:red, 155; green, 155; blue, 155 }  ,fill opacity=1 ] (384.56,171.8) .. controls (384.56,170.72) and (385.44,169.85) .. (386.52,169.85) .. controls (387.6,169.85) and (388.47,170.72) .. (388.47,171.8) .. controls (388.47,172.89) and (387.6,173.76) .. (386.52,173.76) .. controls (385.44,173.76) and (384.56,172.89) .. (384.56,171.8) -- cycle ;

\draw (132.67,64.79) node [anchor=north west][inner sep=0.75pt]   [align=left] {$\displaystyle c_{1}$};
\draw (69.67,140.79) node [anchor=north west][inner sep=0.75pt]   [align=left] {$\displaystyle c_{2}$};
\draw (551.67,53.79) node [anchor=north west][inner sep=0.75pt]   [align=left] {$\displaystyle c_{2}$};
\draw (393.67,152.79) node [anchor=north west][inner sep=0.75pt]   [align=left] {$\displaystyle c_{1}$};
\draw (127,161) node [anchor=north west][inner sep=0.75pt]  [color={rgb, 255:red, 163; green, 0; blue, 16 }  ,opacity=1 ] [align=left] {$\displaystyle \gamma |_{[ a,b]}$};
\draw (458,147) node [anchor=north west][inner sep=0.75pt]  [color={rgb, 255:red, 74; green, 144; blue, 226 }  ,opacity=1 ] [align=left] {$\displaystyle \gamma |_{[ a,b]}$};
\draw (423,42) node [anchor=north west][inner sep=0.75pt]  [color={rgb, 255:red, 74; green, 144; blue, 226 }  ,opacity=1 ] [align=left] {$\displaystyle \gamma |_{[ c,d]}$};

\end{tikzpicture}}
	\caption{Singular 1-gon and 2-gon, cf. Definition \ref{def:gons}.}\label{fig:gons}
\end{figure}
\begin{definition}\label{def:gons}
    Let $\gamma\colon\cerchio\to\R^2\setminus\{c_1,c_2\}$ be a loop. We say that:
    \begin{itemize}
        \item $\gamma$ has a \emph{singular 1-gon} if there exists a sub-arc $[a,b]\subset\cerchio$ such that $\gamma(a)=\gamma(b)$ and $\gamma|_{[a,b]}$ is contractible.
        \item $\gamma$ has a \emph{singular 2-gon} if there exist two disjoint sub-arcs $[a,b],[c,d]\subset\cerchio$ such that $\gamma(a)=\gamma(c)$, $\gamma(b)=\gamma(d)$ and the loop obtained by concatenating $\gamma|_{[a,b]}$ and $\gamma|_{[c,d]}$ is contractible.
    \end{itemize}    
\end{definition}

 To determine the minimum number of self-intersections of a loop $\gamma$ in its homotopy class: it is enough to take a representative in $[\gamma]$ which is in general position and to repeatedly \emph{remove} singular 1-gons and 2-gons. This is the content of the following classical result
\begin{theorem}[\cite{HasSco1985}, Theorem 4.2]\label{thm:taut}
    Let $\gamma\colon\cerchio\to\R^2\setminus\{c_1,c_2\}$ be a loop in general position. If $\gamma$ is not taut, then it posseses either a singular 1-gon or a singular 2-gon.
\end{theorem}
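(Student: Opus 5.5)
The plan is to follow Hass and Scott and work in the universal cover. The configuration space $S=\R^2\setminus\{c_1,c_2\}$ is a thrice-punctured sphere. It therefore carries a complete hyperbolic metric, and its universal cover $\widetilde S$ is the hyperbolic plane $\mathbb H^2$, compactified by the circle at infinity $\partial\mathbb H^2$. Contractible loops, homotopic to a point, bound no singular $n$-gons that matter. So we may take $[\gamma]$ nontrivial and write $[\gamma]=[\delta^m]$ with $\delta$ primitive and $m\ge1$. Every lift $\tilde\gamma\colon\R\to\mathbb H^2$ of $\gamma$ is invariant under a deck transformation $g$ representing $[\gamma]$. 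It therefore has two well-defined endpoints on $\partial\mathbb H^2$, namely the fixed points of $g$. For a closed geodesic, these are the endpoints of its axis.

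\textbf{Step 1: count self-intersections through lifts.} Transverse self-intersection points of $\gamma$ correspond to orbits, under the deck group, of unordered pairs $\{(\tilde\gamma_1,s_1),(\tilde\gamma_2,s_2)\}$ with $s_1\ne s_2$ parameters and $\tilde\gamma_1(s_1)=\tilde\gamma_2(s_2)$. Here the lifts $\tilde\gamma_1,\tilde\gamma_2$ may coincide, and the parameters are taken modulo the stabiliser. Thus $|\gamma|$ is a sum, over orbits of pairs of lifts, of the number of intersections of the two lifts in a fundamental domain for their common stabiliser.

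\textbf{Step 2: lower bound and equality case.} Two distinct lifts whose endpoint pairs on $\partial\mathbb H^2$ are linked must intersect an odd number of times, by a Jordan curve argument in the closed disc. So they meet at least once. The closed geodesic $\gamma_0\in[\gamma]$, if necessary perturbed slightly into general position when $m>1$, realises exactly the following count:
\begin{itemize}
\item each distinct pair of lifts with linked endpoints meets exactly once;
\item no other pair of lifts meets;
\item no lift meets itself, apart from the controlled contribution from the power $m$ after perturbation.
\end{itemize}
The reason is that geodesics in $\mathbb H^2$ meet at most once. Since the endpoint data depend only on the homotopy class, this gives $|\gamma|\ge|\gamma_0|$. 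Moreover, equality holds exactly when the counts of Step 1 are minimal for every pair. Hence, if $\gamma$ is not taut, one of the following occurs:
\begin{enumerate}[label=(\alph*)]
\item a single lift $\tilde\gamma$ has a self-intersection $\tilde\gamma(s)=\tilde\gamma(s')$;
\item two distinct lifts intersect at least twice, modulo their stabiliser (this covers both linked and unlinked endpoints, since in the unlinked case any intersection number is even, hence $\ge2$).
\end{enumerate}

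\textbf{Step 3: produce the gon.} In case (a), the arc $\tilde\gamma|_{[s,s']}$ is a closed loop in the simply connected $\mathbb H^2$. Its projection $\gamma|_{[a,b]}$, with $\gamma(a)=\gamma(b)$, is therefore contractible, which is a singular $1$-gon. In case (b), take two consecutive intersection points $P,Q$ of $\tilde\gamma_1$ and $\tilde\gamma_2$. The sub-arcs of $\tilde\gamma_1$ and $\tilde\gamma_2$ between $P$ and $Q$ close up to a loop in $\mathbb H^2$, which is contractible. Projecting gives arcs $[a,b],[c,d]\subset\cerchio$ with $\gamma(a)=\gamma(c)$, $\gamma(b)=\gamma(d)$ and contractible concatenation, which is a singular $2$-gon.

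\textbf{Main obstacle.} I expect the delicate point to be the requirement in Definition \ref{def:gons} that the two arcs be \emph{disjoint} in $\cerchio$. When $\tilde\gamma_2=h\tilde\gamma_1$, the projected arcs may overlap, or one may contain the other. This is also where non-primitive classes ($m>1$) interfere. To handle it I would first choose, among all offending pairs, the pair of intersection points $P,Q$ whose connecting arcs are shortest, that is, innermost. If the projected arcs overlap, then either the overlap forces $\gamma(a)$ to be revisited along a single arc, giving a $1$-gon inside, or the minimality of the chosen pair is contradicted by a shorter bigon. Making this case analysis airtight, including degenerate orderings of $a,b,c,d$ around the circle, is the step I expect to require the most care. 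This is the content of Hass and Scott's argument, and for the purposes of this paper I would invoke \cite[Theorem 4.2]{HasSco1985} directly at that point.
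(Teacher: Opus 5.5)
The paper does not prove this statement; it quotes it from Hass--Scott. Your proposal ends the same way: at the decisive step you invoke \cite[Theorem 4.2]{HasSco1985}, which is the statement itself. As a justification it therefore reduces to the paper's citation, and as a proof it is circular. Read as a self-contained argument, the sketch before the citation also has a genuine error, located exactly where the difficulty lies, namely non-primitive classes $[\gamma]=[\delta^m]$ with $m\ge 2$.

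\emph{Step 2.} For $0<j<m$, the lifts $\tilde\delta^{j}\tilde\gamma$ are lifts of $\gamma$ distinct from $\tilde\gamma$ but with the \emph{same} endpoints. In the perturbed geodesic these coaxial lifts do meet; they produce its $m-1$ extra double points. So ``no other pair of lifts meets'' is false. You also establish no lower bound for coaxial pairs, so ``equality iff every pair count is minimal'' and the reduction to (a)/(b) are unproved.

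\emph{Step 3.} If (b) is read as including coaxial pairs, then Step 3 is false even for taut curves. Take the standard representative of $\alpha_1^3$ with two double points, which is taut. The lifts $\tilde\gamma$ and $\tilde\alpha_1\tilde\gamma$ meet twice per period, and both bigons they bound project to overlapping arcs of $\cerchio$. A direct check shows this curve has no singular 1-gon and no singular 2-gon with disjoint arcs. So the innermost-selection argument under ``Main obstacle'' cannot work: here there is no shorter bigon and no 1-gon to fall back on. An argument for coaxial pairs has to use the excess over the $m-1$ double points forced in the annular cover of $\delta$, not merely the presence of two intersection points.

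Three further points need repair.
\begin{itemize}
\item \emph{Parabolic classes.} With the finite-area (cusped) hyperbolic metric on $\R^2\setminus\{c_1,c_2\}$, the peripheral classes $[\alpha_1^k]$, $[\alpha_2^k]$, $[(\alpha_1\alpha_2)^k]$ are parabolic. Their lifts have a single endpoint at infinity, and there is no closed geodesic to compare with. This matters here: $[\alpha_1\alpha_2]$ is precisely the class where the paper uses tautness to conclude that $\varphi$ is simple. You need a structure with geodesic boundary (or funnels), so that every non-trivial deck transformation is hyperbolic.
\item \emph{Contractible loops.} The statement does not exclude them: a contractible general-position loop with a double point is not taut. ``Bound no singular $n$-gons that matter'' is not an argument.
\item \emph{Step 3(a).} You tacitly assume that the double point $\tilde\gamma(s)=\tilde\gamma(s')$ has $s'-s$ smaller than one period. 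Otherwise $[s,s']$ is not a sub-arc of $\cerchio$, and $\gamma|_{[s,s']}$ is not a singular 1-gon in the sense of Definition \ref{def:gons}.
\end{itemize}
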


We conclude this section with two further definitions useful in the context of this paper.
\begin{definition}[Simple loops]
    We say that a loop $\gamma$ in $\R^2\setminus\{c_1,c_2\}$ is \emph{simple} if it is free of self-intersections. A sub-loop of $\gamma$ is said to be \emph{innermost} if it is simple. 
    
    A singular 1-gon or 2-gon is \emph{innermost} if, regarded as a loop, it does not contain any singular 1-gons or 2-gons. 
\end{definition}

\subsection{Variational framework}\label{sec:variational}

In this paper, we study both periodic and fixed-end solutions of the motion equation \eqref{eq:motion} at fixed energy levels. Their existence will be established via variational methods, so in this section we introduce the functional setting and the variational principle used in the proof.

Let $J=[t_0,t_1]\subset\R$ and consider the Sobolev space $H^1(J;\R^2)$. We denote by $H^1(J;\R^2\setminus\{c_1,c_2\})$ the open subset of $H^1(J;\R^2)$ consisting of paths avoiding the centres, i.e., paths whose image does not intersect $\{c_1,c_2\}$.

For $h$ satisfying \eqref{hyp:energy}, we introduce the \emph{Maupertuis functional}
\[
    \begin{aligned}
    &\mathcal{M}_h\colon H^1(J;\R^2)\to\R\cup\{+\infty\} \\
    &\mathcal{M}_h(\gamma)=\frac12\int_J\lvert\dot{\gamma}(t)\rvert^2\,dt\int_J\left[h+V(\gamma(t))\right]\,dt,
    \end{aligned}
\]
and, for any $\gamma\in H^1(J;\R^2)$ such that $\mathcal{M}_h(\gamma)>0$, we set
\[
\omega^2=\dfrac{\int_J\left[h+V(\gamma(t))\right]\,dt}{\frac12\int_J\lvert\dot{\gamma}(t)\rvert^2\,dt}\quad \text{and}\quad J_\omega=\left[\frac{t_0}{\omega},\frac{t_1}{\omega}\right].
\]
It is well-known that $\mathcal{M}_h$ is differentiable on $H^1(J;\R^2\setminus\{c_1,c_2\})$ (see, e.g., \cite{AmbCotZel1993}) and its critical points are solutions of equation \eqref{eq:motion} at energy $h$:
\begin{proposition}[The Maupertuis principle]\label{prop:maupertuis}
    Let $\gamma\in H^1(J;\R^2\setminus\{c_1,c_2\})$ be a non constant critical point of $\mathcal{M}_h$ such that $\mathcal{M}_h(\gamma)>0$. Then, $\gamma$ is a $C^2$ curve that solves the problem
    \[
    \begin{cases}
        \begin{aligned}
        &\omega^2\ddot{\gamma}(t)=\nabla V(\gamma(t)) & t\in J \\
        &\frac12\lvert\dot{\gamma}(t)\rvert^2 -\frac{V(\gamma(t))}{\omega^2}=\frac{h}{\omega^2} & t\in J
        \end{aligned}
    \end{cases}
    \]
    with boundary conditions $(\gamma(t_0),\gamma(t_1))$. Similarly, the function $\psi(t)=\gamma(\omega t)$ solves the problem 
    \[
    \begin{cases}
        \begin{aligned}
            &\ddot{\psi}(t)=\nabla V(\psi(t)) & t\in J_\omega \\
            &\frac12\lvert\dot{\psi}(t)\rvert^2 -V(\psi(t)) = h & t\in J_\omega
        \end{aligned}
    \end{cases}
    \]
    with boundary conditions $(\psi(t_0/\omega), \psi(t_1/\omega))$. 
\end{proposition}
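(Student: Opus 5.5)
The plan is to follow the classical argument for the Maupertuis principle: compute the first variation of $\mathcal{M}_h$ at $\gamma$, read off the Euler--Lagrange equation in weak form, bootstrap regularity, and then recover the energy relation from the value of $\omega$. For $\gamma\in H^1(J;\R^2\setminus\{c_1,c_2\})$ the image of $\gamma$ is a compact set avoiding the centres, so $V$ is $\mathcal{C}^2$ on a neighbourhood of it. Hence $\mathcal{M}_h$ is differentiable there. For $v\in H^1_0(J;\R^2)$, write
\[
A=\tfrac12\int_J|\dot\gamma|^2,\qquad B=\int_J [h+V(\gamma)].
\]
The derivative is
\[
d\mathcal{M}_h(\gamma)[v]= B\int_J\langle\dot\gamma,\dot v\rangle + A\int_J\langle\nabla V(\gamma),v\rangle .
\]
Since $\mathcal{M}_h(\gamma)=AB>0$ and $h+V>0$ by \eqref{hyp:energy}, both $A>0$ and $B>0$. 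Thus $\omega^2=B/A$ is well defined and positive. Dividing by $A$, criticality gives $\int_J\omega^2\langle\dot\gamma,\dot v\rangle+\langle\nabla V(\gamma),v\rangle=0$ for every test function $v$. In other words, $\omega^2\ddot\gamma=\nabla V(\gamma)$ holds in the weak sense.

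Next comes regularity. The right-hand side $\nabla V(\gamma(\cdot))$ is continuous, so $\dot\gamma$ has a weak derivative that is continuous. Hence $\gamma\in\mathcal{C}^2(J)$ and the equation holds classically. The boundary values are unchanged, because variations are taken in $H^1_0$ with the endpoints fixed.

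To get the energy relation, multiply the equation by $\dot\gamma$. This shows that $E(t)=\frac12|\dot\gamma|^2-V(\gamma)/\omega^2$ is constant on $J$, say $E\equiv e$. Integrating over $J$ gives
\[
|J|\,e = A - \frac{1}{\omega^2}\Big(B-h|J|\Big) = A-\frac{B}{\omega^2}+\frac{h|J|}{\omega^2}=\frac{h|J|}{\omega^2},
\]
where the last step uses $B/\omega^2=A$. Hence $e=h/\omega^2$, which is the claimed energy relation. This step is where the specific normalisation $\omega^2=B/A$ is essential, and it is the only non-routine point.

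Finally, set $\psi(t)=\gamma(\omega t)$ for $t\in J_\omega$. The chain rule gives $\ddot\psi(t)=\omega^2\ddot\gamma(\omega t)=\nabla V(\psi(t))$. It also gives $\frac12|\dot\psi|^2=\omega^2\cdot\frac12|\dot\gamma|^2=V(\psi)+h$, which is the energy equation \eqref{eq:energy}. The endpoints correspond to $t_0/\omega$ and $t_1/\omega$. The non-constancy hypothesis is used only to guarantee $A>0$, and it is already implied by $\mathcal{M}_h(\gamma)>0$.
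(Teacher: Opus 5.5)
Your argument is correct. The paper gives no proof of this proposition: it calls the fact well known and cites \cite{AmbCotZel1993}. Your route is the standard one found there: first variation with $\omega^2 = B/A$, regularity via the weak Euler--Lagrange equation, identifying the energy constant by integrating over $J$, and then the time rescaling $\psi(t)=\gamma(\omega t)$.
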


The previous result can be adapted to the search for both periodic and fixed-end solutions, by imposing suitable boundary conditions and, if necessary, additional topological constraint within $ H^1(J;\R^2\setminus\{c_1,c_2\})$ (see Sections \ref{sec:fixed_end}-\ref{sec:periodic}). In any case, the space $ H^1(J;\R^2\setminus\{c_1,c_2\})$ is in general not closed and direct methods cannot be applied directly in this setting. For this reason, we shall always begin by working in the ambient Hilbert space $H^1(J;\R^2)$, which also includes paths colliding with the centres, and then deduce the smoothness of minimisers through qualitative arguments. 

In the course of this analysis, we will need to employ some properties of the Maupertuis functional, which are collected below. For the proof of these facts, we refer to \cite[Lemma 3.5]{BarCan}. It is worth noting that the Maupertuis functional is \emph{not} additive under concatenation; given $\gamma_1,\gamma_2\in H^1(J;\R^2)$ for which the concatenation $\gamma_1\#\gamma_2$ is well-defined, in general
\[
\mathcal{M}_h(\gamma_1\#\gamma_2)\neq\mathcal{M}_h(\gamma_1)+\mathcal{M}_h(\gamma_2).
\]
Another possible functional to obtain solutions of \eqref{eq:motion}, this time \textit{unparametrized}, is the \emph{Jacobi-length} one. This functional, albeit non-differentiable, is additive and it is  defined as
\[
    \mathcal{L}_h(\gamma) = \int_J\lvert\dot{\gamma}(t)\rvert\sqrt{h+V(\gamma(t))}\,dt.
\]
Observe that $\mathcal{L}_h$ is well-defined for any $\gamma\in H^1(J;\R^2\setminus\{c_1,c_2\})$, since $V(\gamma)+h>0$ thanks to \eqref{hyp:energy}. 
\begin{proposition}\label{prop:maupertuis_properties}
    The Maupertuis functional satisfies the following properties: 
    \begin{enumerate}[label = \roman*)]
        \item $\mathcal{M}_h$ is invariant with respect to affine time reparametrisations;
        \item $\mathcal{M}_h$ is super-additive: if $\gamma\in H^1(J;\R^2)$ and $[a,b]\subset J$ then 
        \[
        \mathcal{M}_h(\gamma)\ge \mathcal{M}_h(\gamma|_{[a,b]})+\mathcal{M}_h(\gamma|_{J\setminus[a,b]});
        \]
        \item up to reparametrisations, $\gamma$ is a critical point of $\mathcal{M}_h$ in $ H^1(J;\R^2\setminus\{c_1,c_2\})$ if and only if it is a critical point of $\mathcal{L}_h$ in the same space.
    \end{enumerate}
\end{proposition}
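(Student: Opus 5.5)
The statement to prove is Proposition \ref{prop:maupertuis_properties}. I would verify the three properties directly from the definitions of $\mathcal{M}_h$ and $\mathcal{L}_h$, using only that $h+V>0$ on $\R^2\setminus\{c_1,c_2\}$ by \eqref{hyp:energy}.

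\emph{Invariance (i).} Take an affine reparametrisation $\tilde\gamma(s)=\gamma(a s+b)$ with $a>0$, defined on the rescaled interval $\tilde J$. The change of variables $t=as+b$ gives
\[
\int_{\tilde J}\lvert\dot{\tilde\gamma}\rvert^2\,ds=a\int_J\lvert\dot\gamma\rvert^2\,dt,\qquad \int_{\tilde J}[h+V(\tilde\gamma)]\,ds=a^{-1}\int_J[h+V(\gamma)]\,dt .
\]
The factors $a$ and $a^{-1}$ cancel in the product, so $\mathcal{M}_h(\tilde\gamma)=\mathcal{M}_h(\gamma)$.

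\emph{Super-additivity (ii).} Write $J\setminus[a,b]$ as the union $I_1\cup I_2$ of the complementary intervals. Set $A_i=\frac12\int_{I_i}\lvert\dot\gamma\rvert^2$ and $B_i=\int_{I_i}(h+V(\gamma))$, for $i=0,1,2$, with $I_0=[a,b]$. All these quantities are nonnegative, since $h+V>0$. Then
\[
\mathcal{M}_h(\gamma)=\Big(\sum_i A_i\Big)\Big(\sum_j B_j\Big)\ge A_0B_0+(A_1+A_2)(B_1+B_2),
\]
because the omitted cross terms $A_0B_j$ and $A_iB_0$ are all nonnegative. I read $\mathcal{M}_h(\gamma|_{J\setminus[a,b]})$ as $(A_1+A_2)(B_1+B_2)$, i.e.\ the functional evaluated on the union of the two pieces. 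If instead each remaining piece is counted separately, dropping further cross terms gives the stronger inequality $\mathcal{M}_h(\gamma)\ge\sum_i A_iB_i$. If $V$ takes the value $+\infty$ at a collision, both sides are $+\infty$ and the inequality holds trivially.

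\emph{Equivalence of critical points (iii).} This is the only step with real content. By Cauchy--Schwarz,
\[
\mathcal{L}_h(\gamma)^2\le\int_J\lvert\dot\gamma\rvert^2\int_J(h+V(\gamma))=2\mathcal{M}_h(\gamma),
\]
with equality exactly when $\lvert\dot\gamma\rvert^2$ is proportional to $h+V(\gamma)$, that is, when $\gamma$ is parametrised proportionally to Jacobi arclength.

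For the forward direction, let $\gamma$ be a critical point of $\mathcal{M}_h$. By Proposition \ref{prop:maupertuis}, $\gamma$ is $C^2$ and satisfies the rescaled energy equation, so $\lvert\dot\gamma\rvert^2=2(h+V)/\omega^2$. Thus equality holds in Cauchy--Schwarz, and $\lvert\dot\gamma\rvert>0$ everywhere, so $\mathcal{L}_h$ is differentiable at $\gamma$. Differentiating the identity $2\mathcal{M}_h=\mathcal{L}_h^2$ on Jacobi-arclength curves, and computing $d\mathcal{L}_h(\gamma)$ directly, yields the Euler--Lagrange equation of $\mathcal{L}_h$. That equation is the geodesic equation for $g_h$, and it is satisfied. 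So $\gamma$ is critical for $\mathcal{L}_h$.

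For the converse, a critical point of $\mathcal{L}_h$ is a $g_h$-geodesic; I am taking the standard regularity of such geodesics for granted. Reparametrising it proportionally to Jacobi arclength gives a curve satisfying $\omega^2\ddot\gamma=\nabla V$ together with the energy relation, which is precisely the Euler--Lagrange system of $\mathcal{M}_h$. The main care needed is that $\mathcal{L}_h$ is non-differentiable where $\dot\gamma=0$. This is why the statement holds only up to reparametrisation, and the argument must be carried out on regular curves.
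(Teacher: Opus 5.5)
Parts (i) and (ii) are correct; they are the standard direct checks. The paper gives no argument of its own here and only points to \cite[Lemma 3.5]{BarCan}. The problem is in (iii): you identify the equality case of Cauchy--Schwarz, $\lvert\dot\gamma\rvert^2=\lambda^2(h+V(\gamma))$, with parametrisation proportional to Jacobi arclength, and that identification is wrong. Jacobi arclength means $g_h(\dot\gamma,\dot\gamma)=(h+V(\gamma))\lvert\dot\gamma\rvert^2\equiv\mathrm{const}$, i.e.\ $\lvert\dot\gamma\rvert\propto(h+V)^{-1/2}$; this is the normalisation the paper itself uses in the proof of Proposition \ref{prop:collision_minimisers}. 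By contrast, $\lvert\dot\gamma\rvert\propto(h+V)^{1/2}$ is Newtonian time at energy $h$, up to the constant factor $\omega$. The two parametrisations differ by the non-constant factor $h+V$. In your forward direction this is only a misnomer, because what you actually use is $\lvert\dot\gamma\rvert^2=2(h+V)/\omega^2$ from Proposition \ref{prop:maupertuis}. In the converse, however, the step ``reparametrising it proportionally to Jacobi arclength gives a curve satisfying $\omega^2\ddot\gamma=\nabla V$'' is false. With $\lvert\dot\gamma\rvert=(h+V)^{-1/2}$, the Euler--Lagrange equation of $\mathcal{L}_h$ becomes $(h+V)\ddot\gamma+\langle\nabla V,\dot\gamma\rangle\dot\gamma=\nabla V/(2(h+V))$, which is not Newton's equation.

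To fix this, reparametrise the regular $\mathcal{L}_h$-critical curve so that $\lvert\dot\gamma\rvert=\lambda\sqrt{h+V(\gamma)}$. Then the reparametrisation-invariant Euler--Lagrange equation of $\mathcal{L}_h$,
\[
\frac{d}{dt}\Big(\sqrt{h+V(\gamma)}\,\frac{\dot\gamma}{\lvert\dot\gamma\rvert}\Big)=\frac{\lvert\dot\gamma\rvert\,\nabla V(\gamma)}{2\sqrt{h+V(\gamma)}},
\]
reduces to $\ddot\gamma=\frac{\lambda^2}{2}\nabla V(\gamma)$. With $\omega^2=2/\lambda^2$, which is exactly the ratio defining $\omega$, this is the system of Proposition \ref{prop:maupertuis}.

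Your phrase ``differentiating the identity $2\mathcal{M}_h=\mathcal{L}_h^2$'' also needs tightening. That identity holds only on a non-open set, so differentiating it controls only variations tangent to that set. A cleaner argument handles both directions at once. By Cauchy--Schwarz, $2\mathcal{M}_h-\mathcal{L}_h^2\ge0$ everywhere, with equality at every curve with $\lvert\dot\gamma\rvert^2\propto h+V$. Such curves are automatically regular, since $h+V$ is bounded below by a positive constant, so both functionals are differentiable there. The difference therefore attains a minimum at such a curve, which gives $d\mathcal{M}_h(\gamma)=\mathcal{L}_h(\gamma)\,d\mathcal{L}_h(\gamma)$ on all variations. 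Since $\mathcal{L}_h(\gamma)>0$ and $\mathcal{L}_h$-criticality is invariant under regular reparametrisations, the two sets of critical points coincide up to reparametrisation.
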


\section{Existence of collision-less minimisers}

\subsection{Fixed-end minimisation}\label{sec:fixed_end}

In this section we want to study the family of fixed-end problems
\[
    \begin{cases}
        \ddot{x} = \nabla V(x),\\
        \frac12 \lvert\dot{x}\rvert^2 - V(x) = h,\\
        x(\pm \omega) = q^\pm
    \end{cases}
\]
with $\omega>0$, $h$ satisfying \eqref{hyp:energy} and $q^\pm\in\R^2\setminus\{c_1,c_2\}$. For the purposes of this paper and to ease its exposition, we will assume that 
\begin{equation}\label{def:fixed_end}
    \lvert q^+\rvert = \lvert q^-\rvert\quad\text{and}\quad |q^+|>|c_i|,\ \text{for}\ i=1,2,
\end{equation}
so that the two centres lie inside the ball of radius $|q^+|$ centred at $0$. We can introduce the Maupertuis functional 
\[
    \mathcal{M}_h(\gamma) = \frac12\int_{-1}^{1}\lvert\dot{\gamma}(t)\rvert^2\,dt\int_{-1}^{1}\left[h+ V(\gamma(t))\right]\,dt,
\]
which is well-defined on the Hilbert manifold 
\[
    \hat{\mathcal{H}}_{q^\pm} = \{\gamma\in H^1([-1,1];\R^2\setminus\{c_1,c_2\}):\,\gamma(\pm 1)=q^\pm\}.
\]
Looking for complex solutions, we can impose a finer topological constraint on $u$, counting the number of turns it makes around each of the centres. 
Indeed, we can glue to each path $\gamma$ in $\hat{\mathcal{H}}_{q^\pm}$ a path connecting $q^+$ to $q^-$ and check to which homotopy class the resulting path belongs. The result depends of course on the choice of cap; we will always \emph{close} $\gamma$ by adding a circular path on $\partial B_{|q^-|}$ which goes from $q^+$ to $q^-$. Note that, in this section, this is the only point in which we use the requirement \eqref{def:fixed_end}, which can be easily extended to a generic choice of $q^\pm\in\R^2\setminus\{c_1, c_2\}$. 

Since homotopy classes of loops are coded by reduced words we will decompose $\hat{\mathcal{H}}_{q^\pm}$ in connected components labelled by these words. Fix $k\in\N$ and, for any finite $k$-admissible sequence $s_1,\ldots,s_m\in\{\pm1,\pm2\}$ (see Definition \ref{def:admissible_k}), consider the homotopy classes of the loop obtained as 
	\begin{equation}\label{def:homotopy_classes}
		\gamma_{s_1,\ldots,s_m}\uguale \al_{|s_1|}^{\text{sgn}(s_1)}\al_{|s_2|}^{\text{sgn}(s_2)}\cdots \al_{|s_m|}^{\text{sgn}(s_m)}
	\end{equation}
Closing each path $\gamma$ in $\hat{\mathcal{H}}_{q^\pm}$ and checking wether it belongs to homotopy class $[\gamma_{s_1, \ldots, s_m}]$ we obtain the desired decomposition. 
The \emph{closure} of a fixed-end path $\gamma$ obtained following the procedure outlined abovez is denoted as $\phi_\gamma$. So, we can define
\[
\hat{\mathcal{H}}_{q^\pm}(s_1,\ldots,s_m) = \left\lbrace \gamma\in \hat{\mathcal{H}}_{q^\pm}:\, \phi_\gamma \in [\gamma_{s_1,\ldots,s_m}]\right\rbrace
\]
and its weak $H^1$ closure $\mathcal{H}_{q^\pm}(s_1,\ldots,s_m)$ (see Figure \ref{fig:fixed_end_example}; for a rigorous definition of $\phi_\gamma$ see \cite{SoaTer2012,BosDamPap2018}). At this point, we can prove the following result:
\begin{figure}[t]
    \centering
	\resizebox{.5\textwidth}{!}{\tikzset{every picture/.style={line width=0.75pt}} 

\begin{tikzpicture}[x=0.75pt,y=0.75pt,yscale=-1,xscale=1]

\draw  [fill={rgb, 255:red, 128; green, 128; blue, 128 }  ,fill opacity=1 ][line width=2.25]  (228.19,180.26) .. controls (227.88,178.75) and (228.86,177.29) .. (230.38,177) .. controls (231.9,176.72) and (233.38,177.71) .. (233.69,179.23) .. controls (233.99,180.75) and (233.01,182.21) .. (231.49,182.49) .. controls (229.97,182.78) and (228.49,181.78) .. (228.19,180.26) -- cycle ;
\draw  [fill={rgb, 255:red, 128; green, 128; blue, 128 }  ,fill opacity=1 ][line width=2.25]  (419.19,180.26) .. controls (418.88,178.75) and (419.86,177.29) .. (421.38,177) .. controls (422.9,176.72) and (424.38,177.71) .. (424.69,179.23) .. controls (424.99,180.75) and (424.01,182.21) .. (422.49,182.49) .. controls (420.97,182.78) and (419.49,181.78) .. (419.19,180.26) -- cycle ;
\draw  [fill={rgb, 255:red, 0; green, 0; blue, 0 }  ,fill opacity=1 ][line width=2.25]  (145.19,129.26) .. controls (144.88,127.75) and (145.86,126.29) .. (147.38,126) .. controls (148.9,125.72) and (150.38,126.71) .. (150.69,128.23) .. controls (150.99,129.75) and (150.01,131.21) .. (148.49,131.49) .. controls (146.97,131.78) and (145.49,130.78) .. (145.19,129.26) -- cycle ;
\draw  [fill={rgb, 255:red, 0; green, 0; blue, 0 }  ,fill opacity=1 ][line width=2.25]  (535.19,249.26) .. controls (534.88,247.75) and (535.86,246.29) .. (537.38,246) .. controls (538.9,245.72) and (540.38,246.71) .. (540.69,248.23) .. controls (540.99,249.75) and (540.01,251.21) .. (538.49,251.49) .. controls (536.97,251.78) and (535.49,250.78) .. (535.19,249.26) -- cycle ;
\draw    (147.94,128.75) .. controls (161,166) and (164,233) .. (228,234) .. controls (292,235) and (401,164) .. (422,165) .. controls (443,166) and (432,206) .. (413,205) .. controls (394,204) and (396,156) .. (421,156) .. controls (446,156) and (451,212.5) .. (424,212.5) .. controls (397,212.5) and (268,211.5) .. (231,200.5) .. controls (194,189.5) and (196,153.5) .. (233,152.5) .. controls (270,151.5) and (294.39,174.81) .. (362,198) .. controls (429.61,221.19) and (521.94,244.75) .. (537.94,248.75) ;
\draw   (184.4,209.16) .. controls (184.64,213.69) and (185.73,217.42) .. (187.67,220.36) .. controls (184.88,218.21) and (181.25,216.84) .. (176.76,216.26) ;
\draw   (439.1,178.2) .. controls (435.69,181.18) and (433.42,184.35) .. (432.31,187.69) .. controls (432.28,184.16) and (431.11,180.46) .. (428.79,176.57) ;
\draw   (284.39,159.69) .. controls (286.44,163.72) and (288.96,166.69) .. (291.92,168.59) .. controls (288.5,167.76) and (284.62,167.98) .. (280.28,169.27) ;
\draw   (348.59,217.41) .. controls (345.53,214.07) and (342.32,211.87) .. (338.96,210.83) .. controls (342.48,210.72) and (346.16,209.48) .. (350,207.08) ;
\draw  [dash pattern={on 4.5pt off 4.5pt}]  (147.38,126) .. controls (135,65.5) and (556,37.5) .. (537.38,246) ;
\draw   (399.81,101.41) .. controls (397.79,97.35) and (395.3,94.36) .. (392.35,92.44) .. controls (395.77,93.3) and (399.65,93.11) .. (404,91.85) ;

\draw (145.94,131.75) node [anchor=north east] [inner sep=0.75pt]   [align=left] {$\displaystyle q_{-}$};
\draw (539.94,251.75) node [anchor=north west][inner sep=0.75pt]   [align=left] {$\displaystyle q^{+}$};
\draw (232.94,182.75) node [anchor=north west][inner sep=0.75pt]   [align=left] {$\displaystyle c_{1}$};
\draw (419.94,182.75) node [anchor=north east] [inner sep=0.75pt]   [align=left] {$\displaystyle c_{2}$};

\end{tikzpicture}}
	\caption{An example of fixed-end path in $\hat{\mathcal{H}}_{q^\pm}(1,-2, -2, -1, 2)$.}\label{fig:fixed_end_example}
\end{figure}
\begin{theorem}\label{thm:collisionless_fixed_end}
    Let $q^-, q^+$ as in \eqref{def:fixed_end}, let $h$ satisfying \eqref{hyp:energy} and $k\in\N$. Then, for any finite $k$-admissible sequence $s_1,\ldots,s_m\in\{\pm1,\pm2\}$, there exists a collision-free minimiser of the Maupertuis functional $\mathcal{M}_h$ in $\hat{\mathcal{H}}_{q^\pm}(s_1,\ldots,s_m)$. 
    
    Any such minimiser $\eta$, up to reparametrisations, is a solution of \eqref{eq:motion} at energy $h$, with fixed-ends $q^\pm$ such that $\phi_\eta$ belongs to the homotopy class $[\gamma_{s_1,\ldots,s_m}]$.
\end{theorem}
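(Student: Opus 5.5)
We argue by the direct method on the weak closure $\mathcal{H}_{q^\pm}(s_1,\ldots,s_m)$, then exclude collisions using the critical strong-force structure \eqref{eq:potential}.

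\textbf{Existence.} First, $\mathcal{M}_h$ is coercive on $\mathcal{H}_{q^\pm}(s_1,\ldots,s_m)$. By \eqref{hyp:energy} and compactness of $\{|x|\le R\}$ minus small balls, we have $h+V\ge \delta>0$ everywhere, so $\int_{-1}^1 [h+V(\gamma)]\ge 2\delta$ and $\mathcal{M}_h(\gamma)\ge \delta\|\dot\gamma\|_{L^2}^2$. Since the endpoints are fixed, this controls the full $H^1$ norm. Lower semicontinuity holds because the kinetic term is weakly l.s.c., $\gamma_n\to\gamma$ uniformly, and Fatou's lemma applies to $h+V\ge\delta$ (with $V=+\infty$ at the centres). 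The class is nonempty: pick any smooth representative of the word. Hence a minimiser $\eta$ exists in the weak closure, with $0<\mathcal{M}_h(\eta)<+\infty$.

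\textbf{Collisions have infinite cost.} Suppose $\eta(t^*)=c_j$ for some $t^*$. Near $c_j$ we have $V\ge f_j(x)|x-c_j|^{-2}-C$ with $f_j\ge f_{\min}>0$. For $\eta\in H^1$, Hölder gives $|\eta(t)-c_j|\le \|\dot\eta\|_{L^2}|t-t^*|^{1/2}$, hence $V(\eta(t))\gtrsim |t-t^*|^{-1}$, which is not integrable. Therefore $\int [h+V(\eta)]=+\infty$ unless $\dot\eta\equiv0$, which is impossible since $\eta$ has fixed distinct topology. Finiteness of $\mathcal{M}_h(\eta)$ then rules out collisions, so $\eta$ takes values in $\R^2\setminus\{c_1,c_2\}$. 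This is exactly where the critical exponent $2$ suffices: it is the borderline strong force.

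\textbf{Staying in the open class.} The main obstacle is that a weak limit of paths in $\hat{\mathcal{H}}_{q^\pm}(s_1,\ldots,s_m)$ which avoids the centres must still lie in the same homotopy class. Since $\eta$ is collision-free and continuous, it has positive distance from $\{c_1,c_2\}$. A minimising sequence converges uniformly, so for large $n$ the segment homotopy between $\gamma_n$ and $\eta$ stays away from the centres. Consequently $\phi_\eta\in[\gamma_{s_1,\ldots,s_m}]$, and $\eta\in\hat{\mathcal{H}}_{q^\pm}(s_1,\ldots,s_m)$. This set is open in $H^1$, being a union of path-components of the open set of collision-free paths, so $\eta$ is a local minimiser and hence a free critical point of $\mathcal{M}_h$ on $\hat{\mathcal{H}}_{q^\pm}$.

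\textbf{Conclusion.} Proposition~\ref{prop:maupertuis} then shows that $\eta$ is $C^2$ and, after the rescaling $\psi(t)=\eta(\omega t)$, solves \eqref{eq:motion} with energy \eqref{eq:energy} and endpoints $q^\pm$, while preserving the homotopy constraint. The $k$-admissibility of $(s_i)$ is not used here beyond guaranteeing a well-defined reduced word.
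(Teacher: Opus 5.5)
Your proposal is correct and follows the same route as the paper. Both use the direct method on the weak closure (coercivity from $h+V\ge\delta>0$ and lower semicontinuity), exclude collisions by the critical strong-force bound $V\ge f_j|x-c_j|^{-2}-C$, and then apply the Maupertuis principle of Proposition~\ref{prop:maupertuis}. The only differences are in detail: you rule out collisions via the H\"older bound $|\eta(t)-c_j|\le\|\dot\eta\|_{L^2}|t-t^*|^{1/2}$, which makes $V(\eta)$ non-integrable, where the paper uses a logarithmic Cauchy--Schwarz estimate on $\int|\dot\gamma|/|\gamma-c_1|$ to get $\mathcal{M}_h(\gamma|_{[a,t]})\to+\infty$; and you explicitly check, via uniform convergence, that the collision-free limit stays in the open class $\hat{\mathcal{H}}_{q^\pm}(s_1,\ldots,s_m)$ and is hence a genuine critical point, a step the paper leaves implicit.
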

\begin{proof}
    \emph{Step 1 (Existence of minimisers of $\mathcal{M}_h$):} The proof employs direct methods in the calculus of variations. Fix $k\in\N$ and $q^-, q^+$ as in \eqref{def:fixed_end}. Consider a $k$-admissible sequence $s_1,\ldots, s_m\in\{\pm1,\pm2\}$. We show that $\mathcal{M}_h$ is coercive and sequentially weakly lower semi-continuous in the closed space $\mathcal{H}_{q^\pm}(s_1,\ldots,s_m)$. To prove coercivity, we take a sequence $(\gamma_n)$ in $\mathcal{H}_{q^\pm}(s_1,\ldots,s_m)$ such that $\|\gamma_n\|_{H^1}\to+\infty$. Note that, thanks to \eqref{hyp:energy}, $\int_{-1}^1\left[V(\gamma_n)+h\right]\ge C>0$ for any $n\in\N$. So, if $\|\dot{\gamma}_n\|_2\to+\infty$, there's nothing to prove since
    \[
        \mathcal{M}_h(\gamma_n)=\frac12\int_{-1}^1\lvert\dot{\gamma}_n(t)\rvert^2\,dt\int_{-1}^{1}\left[V(\gamma_n(t))+h\right]\,dt\ge C\|\dot{\gamma}_n\|_2^2.
    \]
    Assume instead that $\|\dot{\gamma}_n\|_2\le C_1$ for any $n\in\N$ and some constant $C_1>0$. For any $t\in[-1,1]$, from the fundamental theorem of calculus and H\"older inequality we have that 
    \[
        |\gamma_n(t)|\le |q^-| + \int_{-1}^1 |\dot{\gamma}_n(\tau)|\,d\tau \le |q^-| + \sqrt{2}\|\dot{\gamma}_n\|_2
    \]
    and so $\|\gamma_n\|_{\infty}$ is uniformly bounded by $C_2 = |q^-| + \sqrt{2}C_1$. This is a contradiction since $\|\gamma_n\|_2\le\sqrt{2}\|\gamma_n\|_{\infty}$ and the $H^1$ norm is diverging with $n$. 
    
    We observe that since we are studying a fixed-end problem for a path which belongs to a non-trivial homotopy class described by the sequence $s_1,\ldots,s_m$, we have that $\|\dot{\gamma}_n\|_2\ge C>0$ uniformly. Moreover, we have already seen that the potential part of the Maupertuis functional is bounded from below by a positive constant. Since the product of two positive and lower semi-continuous functions is a lower semi-continuous function, the proof of the existence is now concluded. 

    \emph{Step 2 (Minimisers are collision-less):} Let $\gamma$ be a minimiser of $\mathcal{M}_h$ in $\mathcal{H}_{q^\pm}(s_1,\ldots,s_m)$. Assume by contradiction that there exists $t^*\in(-1,1)$ such that $\gamma(t^*)=c_1$ (if $\gamma$ collides with $c_2$ the proof is the same). Let $a\ge -1$ be such that $\gamma(t)\neq c_1,c_2$ for any $t\in[a,t^*)$ and, without loss of generality, assume that $\gamma([a, t^*])\subset B_r(c_1)$, where $r>0$ has been defined in \eqref{eq:potential}. In this way, for any $t\in[a,t^*)$, the Maupertuis functional reads
    \begin{equation}\label{eq:maup_step}
        \mathcal{M}_h(\gamma|_{[a,t]}) = \frac12\int_a^t|\dot{\gamma}(\tau)|^2\,d\tau\int_a^t\left[\frac{f_1(\gamma(\tau))}{|\gamma(\tau)-c_1|^2} + W_1(\gamma(\tau)) + h\right]\,d\tau
    \end{equation}
    and we can assume that there exists a positive constant $C$ such that $f_1(x)\ge \frac{1}{C}$ for any $x\in\gamma([a,t^*])$. From the fundamental theorem of calculus, for any $t\in[a,t^*]$ we have that
    \[
        \log\lvert \gamma(t)-c_1\rvert - \log\lvert \gamma(a) - c_1\rvert = \int_a^t\frac{\langle\gamma(\tau) - c_1, \dot{\gamma}(\tau)\rangle}{|\gamma(\tau)-c_1|^2}\,d\tau.
    \]
    Using Cauchy-Schwarz and H\"older inequality, and then the estimate from below on $f_1$ we have that
    \[
        \begin{aligned}
        \lvert \log |\gamma(t)-c_1| - \log |\gamma(a)-c_1|\rvert^2 \le & \left(\int_a^t\frac{|\dot{\gamma}(\tau)|}{|\gamma(\tau)-c_1|}\,d\tau\right)^2 \\
            &\le \int_{a}^t|\dot{\gamma}(\tau)|^2\,d\tau\int_{a}^t\frac{1}{|\gamma(\tau)-c_1|^2}\,d\tau \\
            &\le C\int_a^t |\dot{\gamma}(\tau)|^2\,d\tau\int_a^t\frac{f_1(\gamma(\tau))}{|\gamma(\tau)-c_1|^2}\,d\tau.
        \end{aligned}
    \]
    At this point, from \eqref{eq:maup_step}, we have that
    \[
    C\mathcal{M}_h(\gamma|_{[a,t]}) = \frac{C}{2}\int_a^t |\dot{\gamma}(\tau)|^2\,d\tau\int_a^t\frac{f_1(\gamma(\tau))}{|\gamma(\tau)-c_1|^2}\,d\tau + \frac{C}{2}\int_a^t |\dot{\gamma}(\tau)|^2\,d\tau\int_a^t\left[W_1(\gamma(\tau))+h\right]\,d\tau
    \]
    and so 
    \[
    \begin{aligned}
        \frac12\lvert\log |\gamma(t)-c_1| - \log |\gamma(a)-c_1|\rvert^2+\frac{C}{2}\int_a^t|\dot{\gamma}(\tau)|^2\,d\tau\int_a^t\left[W_1(\gamma(\tau))+h\right]\,d\tau &\le \frac{C}{2}\mathcal{M}_h(\gamma|_{[a,t]}) \\
        &\le \frac{C}{2}\mathcal{M}_h(\gamma).
    \end{aligned}
    \]
    Taking the limit as $t\to t^*$ in the previous equation, we obtain that $\mathcal{M}_h(\gamma)=+\infty$. Since the class $\mathcal{H}_{q^\pm}(s_1,\ldots,s_m)$ contains collision-free representatives for which the Maupertus functional is finite,
    the infimum of $\mathcal{M}_h$ is finite on the class. On the other hand, the previous estimate shows that any curve $\gamma$ developing a collision must satisfy $\mathcal{M}_h(\gamma)=+\infty$. Therefore, minimisers are collision-free and belong to $\hat{\mathcal{H}}_{q^\pm}(s_1,\ldots,s_m)$. 
\end{proof}

\subsection{Periodic solutions}\label{sec:periodic}

In the previous section we proved Theorem \ref{thm:collisionless_fixed_end} which estabilshes the existence of classical solutions of \eqref{eq:motion}-\eqref{eq:energy} with fixed-ends and satisfying a precise topological constraint determined by an admissible sequence $s_1,\ldots,s_m\in\{\pm1,\pm2\}$ (see Definition \ref{def:admissible_k}). The present section focuses on periodic solutions of \eqref{eq:motion}-\eqref{eq:energy}, whose images on the configuration space are closed curves. For this reason, we firstly obtain a more general existence result for any non-trivial free homotopy class of loops in $\R^2\setminus\{c_1,c_2\}$, and then we characterize this result in relation with the definition of admissible sequences $s_1,\ldots,s_m$.  
\begin{definition}
    For any non-contractible free homotopy class $[\tau]$ of $\R^2\setminus\{c_1,c_2\}$ we define
    \[
    \hat{\mathcal{H}}(\tau)=\{\gamma\in H^1(\cerchio;\R^2\setminus\{c_1,c_2\}):\,\gamma\in[\tau]\}
    \]
    and its weak $H^1$ closure $\mathcal{H}(\tau)$.
\end{definition}
The next result establishes the existence of minimisers in $\mathcal{H}(\tau)$ for the Maupertuis functional and its proof follows the same arguments of the first step of Theorem \ref{thm:collisionless_fixed_end}.
\begin{proposition}
    Let $[\tau]$ be a free homotopy class of $\R^2\setminus\{c_1,c_2\}$ different from the trivial one and from $[\alpha^k_i]$,  for $i=1,2$ and $k\in \mathbb{Z}$. For any $h$ satisfying \eqref{hyp:energy}, the Maupertuis functional $\mathcal{M}_h$ achieves its minimum in $\mathcal{H}(\tau)$. 
\end{proposition}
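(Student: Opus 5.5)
We argue by the direct method, as in Step 1 of Theorem \ref{thm:collisionless_fixed_end}, with periodic paths on $\cerchio$ in place of fixed-end ones. The main difference is that there are no fixed endpoints. Coercivity therefore cannot come from anchoring a point, and it has to come from the topology of the class $[\tau]$. We take a minimising sequence $(\gamma_n)\subset\hat{\mathcal{H}}(\tau)$. By Proposition \ref{prop:maupertuis_properties} i) we may reparametrise, and the infimum is finite because $\hat{\mathcal{H}}(\tau)$ contains smooth collision-free loops. Assumption \eqref{hyp:energy} together with the bound on $V$ away from the centres gives $\int_{\cerchio}[h+V(\gamma_n)]\ge C>0$. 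Hence the kinetic parts $\|\dot\gamma_n\|_2$ are uniformly bounded.

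\textbf{Step 1: a uniform $L^\infty$ bound.} We must rule out the loops drifting to infinity. Since $[\tau]$ is not trivial and not a power $[\alpha_i^k]$, every representative must wind around both centres. In particular, every loop in $[\tau]$ has to meet the segment $[c_1,c_2]$, or at least the closed ball $\overline{B}$ centred at the origin that contains both centres. Otherwise the loop would lie in the complement of a disc containing $c_1$ and $c_2$. In that region the loop is homotopic to a power of $\alpha_1\alpha_2$, which is a boundary loop, so this case needs extra care. The cleaner route is the following. If the loop avoids the segment $[c_1,c_2]$, then it lies in $\R^2\setminus[c_1,c_2]$, an annulus, so its class is $[(\alpha_1\alpha_2)^k]$. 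We use the hypothesis that excludes these classes; if the statement also intends to exclude $(\alpha_1\alpha_2)^k$, we apply that. Otherwise we argue with a segment joining $c_1$ to infinity. Concretely, we take half-lines $\ell_1,\ell_2$ emanating from $c_1,c_2$ to infinity and disjoint from each other. A loop missing $\ell_i$ lies in a simply connected region punctured only at $c_j$, so its class is a power of $\alpha_j$, which is excluded. Hence $\gamma_n$ meets a fixed compact set, for instance $\ell_1$ intersected with a large ball, or directly the segment $[c_1,c_2]$ if we choose $\ell_1\supset[c_1,c_2]$. Combined with $\|\dot\gamma_n\|_1\le\|\dot\gamma_n\|_2$, this gives $\|\gamma_n\|_\infty\le C$. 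We expect this topological step to be the main obstacle. It is exactly where the excluded classes enter, and we would justify it via the free-group description in Section \ref{sec:topology}.

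\textbf{Step 2: weak limit and lower semicontinuity.} The sequence $(\gamma_n)$ is bounded in $H^1(\cerchio;\R^2)$, so a subsequence converges weakly in $H^1$ and uniformly to some $\gamma\in\mathcal{H}(\tau)$, since $\mathcal{H}(\tau)$ is weakly closed by definition. The kinetic term is weakly lower semicontinuous. The potential term is lower semicontinuous by Fatou's lemma, because $V+h>0$ is bounded below and continuous with values in $(0,+\infty]$ at the centres. Both factors are positive. The kinetic factor is also bounded away from zero, because a non-contractible class forces a minimal length, $\|\dot\gamma_n\|_2\ge C>0$. So the product is lower semicontinuous and $\gamma$ is a minimiser.

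Collision-freeness is not part of this statement. If it is needed, it follows verbatim from Step 2 of Theorem \ref{thm:collisionless_fixed_end}, since any collision forces $\mathcal{M}_h=+\infty$.
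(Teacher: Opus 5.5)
Your plan is the paper's. The paper simply refers to Step 1 of Theorem \ref{thm:collisionless_fixed_end}: coercivity, lower semicontinuity of the two positive factors, and a positive lower bound on the kinetic factor. You are also right that, with no anchored endpoint, the $L^\infty$ bound has to come from topology. However, both of your topological steps are justified incorrectly, and you attach the hypothesis excluding $[\al_i^k]$ to the wrong one.

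In Step 1, the segment route fails. The classes $[(\al_1\al_2)^k]$ are \emph{not} excluded (the paper later minimises in $[\al_1\al_2]$ to build $\mathcal{D}$), and large circles in these classes never meet $[c_1,c_2]$. The half-line route only shows that $\gamma_n$ meets the \emph{unbounded} sets $\ell_1,\ell_2$, so ``hence $\gamma_n$ meets a fixed compact set'' does not follow. Moreover, choosing $\ell_1\supset[c_1,c_2]$ puts $c_2\in\ell_1\cap\ell_2$. The missing observation is elementary and uses only non-triviality. A loop of Euclidean length $L$ lies in the convex disc $\bar B_{L/2}(\gamma(0))$. If this disc missed both centres, the straight-line homotopy to $\gamma(0)$ would contract $\gamma$ in $\R^2\setminus\{c_1,c_2\}$. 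Hence $\mathrm{dist}(\gamma_n(0),\{c_1,c_2\})\le L_n/2$, and so $\|\gamma_n\|_\infty\le\max_i|c_i|+\|\dot\gamma_n\|_2$.

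In Step 2, the claim that ``a non-contractible class forces a minimal length'' is false. The loop $c_1+\ve e^{2\pi i k t}$ represents $[\al_1^k]$ and has length $2\pi|k|\ve\to0$. This is exactly the degeneration in the Remark after Theorem \ref{thm:collisionless_tau}: the kinetic factor tends to $0$, the potential factor blows up, and the limit is the constant collision loop, where $\mathcal{M}_h$ is the indeterminate form $0\cdot\infty$. So your lower semicontinuity argument genuinely needs this bound, and this is where the exclusion hypothesis enters. By the same disc argument, if $L<|c_1-c_2|$ then $\bar B_{L/2}(\gamma(0))$ contains at most one centre, so $\gamma$ is trivial or lies in some $[\al_i^k]$. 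Hence every loop in $[\tau]$ satisfies $\|\dot\gamma\|_2\ge\|\dot\gamma\|_1\ge|c_1-c_2|$.

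Within your own framework, you can also take $\ell_1,\ell_2$ to be the two outward rays of the line through $c_1,c_2$. A loop meeting them at $p_1,p_2$ satisfies $|p_1-c_1|+|c_1-c_2|+|c_2-p_2|=|p_1-p_2|\le L/2$, which gives both bounds at once. With these two facts in place, the rest of your argument goes through. The same reasoning also bounds the diameter of loops in $[\tau]$ from below, so the uniform limit is not a constant.
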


The following proposition describes a property of Maupertuis minimisers: if $\gamma$ is a minimiser of $\mathcal{M}_h$ in $\mathcal{H}(\tau)$, whenever we select a sub-arc of $\gamma$, we obtain a fixed-end minimiser (for a proof of this fact see \cite[Lemma 3.5]{BarCan}).
\begin{proposition}\label{prop:local_minimisers}
    Let $\gamma$ be a minimiser of $\mathcal{M}_h$ in $\mathcal{H}(\tau)$, $[a,b]\subset \cerchio$ and $q^-=\gamma(a)$, $q^+=\gamma(b)$. Then, the path $\gamma|_{[a,b]}$ is a minimiser in the space
        \[
        \mathcal{H}_{q^\pm}(\tau) = \left\lbrace \phi\in H^1([a,b];\R^2):\,\phi(a)=q^-,\ \phi(b)=q^+,\ \phi\#\gamma|_{\cerchio\setminus[a,b]}\in\mathcal{H}(\tau)\right\rbrace.
        \]
\end{proposition}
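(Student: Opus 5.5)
We argue by contradiction: if some admissible competitor did strictly better than $\gamma|_{[a,b]}$, gluing it to the complementary arc would produce a loop in $\mathcal{H}(\tau)$ with strictly smaller Maupertuis value than $\gamma$. Since $\mathcal{M}_h$ is not additive under concatenation, the comparison cannot be carried out on $\mathcal{M}_h$ directly. We therefore pass through a quantity that is additive, namely the Jacobi length $\mathcal{L}_h$, or equivalently the minimum over parametrisations of $\mathcal{M}_h$.

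\textbf{Step 1 (Maupertuis versus Jacobi length).} For any $\phi\in H^1(I;\R^2)$ on an interval of length $|I|$, the Cauchy--Schwarz inequality gives
\[
\mathcal{L}_h(\phi)^2=\Big(\int_I|\dot\phi|\sqrt{h+V(\phi)}\Big)^2\le \int_I|\dot\phi|^2\int_I (h+V(\phi)) = 2\,\mathcal{M}_h(\phi).
\]
Equality holds exactly when $|\dot\phi|^2$ is proportional to $h+V(\phi)$, i.e.\ for the Jacobi (energy) parametrisation. Because $\mathcal{M}_h$ is invariant under affine reparametrisations (Proposition \ref{prop:maupertuis_properties} i)), every path of finite $\mathcal{L}_h$ can be reparametrised on $I$ so that $2\mathcal{M}_h=\mathcal{L}_h^2$. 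Hence
\[
\inf_{\text{reparam.}} 2\mathcal{M}_h = \mathcal{L}_h^2 .
\]

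\textbf{Step 2 (the minimiser is in Jacobi parametrisation).} The global minimiser $\gamma$ must realise equality in Step 1, since otherwise reparametrising it in Jacobi parametrisation would stay in $\mathcal{H}(\tau)$ and strictly lower $\mathcal{M}_h$. Consequently $2\mathcal{M}_h(\gamma)=\mathcal{L}_h(\gamma)^2$, and $\gamma$ minimises $\mathcal{L}_h$ in $\mathcal{H}(\tau)$. By the same equality case, $|\dot\gamma|^2=\lambda(h+V(\gamma))$ with a single constant $\lambda$ along all of $\cerchio$, so the restriction $\gamma|_{[a,b]}$ also satisfies $2\mathcal{M}_h(\gamma|_{[a,b]})=\mathcal{L}_h(\gamma|_{[a,b]})^2$.

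\textbf{Step 3 (cut and paste).} Suppose, for contradiction, that some $\phi\in\mathcal{H}_{q^\pm}(\tau)$ satisfies $\mathcal{M}_h(\phi)<\mathcal{M}_h(\gamma|_{[a,b]})$. Then by Steps 1 and 2,
\[
\mathcal{L}_h(\phi)^2\le 2\mathcal{M}_h(\phi)<\mathcal{L}_h(\gamma|_{[a,b]})^2 .
\]
Form $\psi=\phi\#\gamma|_{\cerchio\setminus[a,b]}$, which lies in $\mathcal{H}(\tau)$ by definition of $\mathcal{H}_{q^\pm}(\tau)$. Additivity of $\mathcal{L}_h$ gives $\mathcal{L}_h(\psi)<\mathcal{L}_h(\gamma)$. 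Reparametrising $\psi$ on $\cerchio$ as in Step 1 yields
\[
2\mathcal{M}_h(\psi)=\mathcal{L}_h(\psi)^2<\mathcal{L}_h(\gamma)^2=2\mathcal{M}_h(\gamma),
\]
contradicting the minimality of $\gamma$ in $\mathcal{H}(\tau)$.

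\textbf{Main obstacle.} The delicate point is that $\phi$ lives in the weak closure of the class, so it may pass through a centre, where $h+V=+\infty$. The reparametrisation in Step 1 must stay within $H^1$ and preserve $\mathcal{L}_h$, which requires an absolutely continuous change of time defined through $\int\sqrt{h+V}\,|\dot\phi|$. If $\phi$ collides, the estimate from Step 2 of Theorem \ref{thm:collisionless_fixed_end} applies: the critical $|x-c_j|^{-2}$ singularity forces $\mathcal{M}_h(\phi)=+\infty$, so such $\phi$ cannot beat $\gamma|_{[a,b]}$. We may therefore assume $\phi$ collision-free, hence $h+V(\phi)$ bounded above and below by positive constants, and the reparametrisation is bi-Lipschitz and harmless. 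A last routine point is that the concatenation is $H^1$ and still lies in $\mathcal{H}(\tau)$, which follows from the definition of $\mathcal{H}_{q^\pm}(\tau)$.
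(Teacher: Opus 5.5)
The paper gives no argument of its own for this statement. It only refers to \cite[Lemma 3.5]{BarCan}, the same source it uses for Proposition~\ref{prop:maupertuis_properties}, whose item iii) is the link between $\mathcal{M}_h$ and $\mathcal{L}_h$ that you exploit. Your proof is correct and self-contained, and it isolates the real point. Super-additivity of $\mathcal{M}_h$ goes the wrong way for a cut-and-paste argument. The identity ``infimum of $2\mathcal{M}_h$ over reparametrisations $=\mathcal{L}_h^2$'' instead turns the problem into one for an additive functional, for which locality of minimisers is immediate.

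Three details should be repaired.

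First, in your final paragraph the time change is the wrong one. The Jacobi arclength $s=\int\sqrt{h+V(\phi)}\,|\dot\phi|$ gives $|\phi'(s)|^2\,(h+V(\phi))\equiv 1$, which is not the equality case of Cauchy--Schwarz. You need $\tau(t)=\int_a^t|\dot\phi|/\sqrt{h+V(\phi)}$, which gives $|\phi'(\tau)|^2=h+V(\phi)$ as required in Step 1. So ``Jacobi parametrisation'' is a misnomer: the equality case is the (rescaled) physical-time parametrisation of Proposition~\ref{prop:maupertuis}.

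Second, this change of time is monotone and absolutely continuous, but in general not bi-Lipschitz, since it is constant where $\dot\phi=0$. What you need, and what holds, is that the reparametrised curve is Lipschitz and unchanged in the relevant respects. Its speed $\sqrt{h+V}$ is bounded along a collision-free path, and it keeps the same endpoints, homotopy class and value of $\mathcal{L}_h$.

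Third, you should justify that the reparametrised $\gamma$ and $\psi$ stay in the weak closure $\mathcal{H}(\tau)$.
\begin{itemize}
\item $\gamma$ is collision-free, because $\mathcal{M}_h(\gamma)<+\infty$ and the strong-force estimate of Theorem~\ref{thm:collisionless_fixed_end} applies.
\item Any collision-free element $\psi$ of $\mathcal{H}(\tau)$ already lies in $\hat{\mathcal{H}}(\tau)$. Weak $H^1$ limits on $\cerchio$ are uniform limits. A loop whose uniform distance from $\psi$ is smaller than the distance of $\psi(\cerchio)$ from $\{c_1,c_2\}$ is homotopic to $\psi$ through the straight-line homotopy, which avoids the centres.
\item Reparametrisation then preserves membership in $\hat{\mathcal{H}}(\tau)\subset\mathcal{H}(\tau)$.
\end{itemize}
With these repairs the argument is complete.
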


Using the same argument of the previous section, we can obtain the following theorem concerning the existence of periodic solutions realising given homotopy classes. Indeed, the space $\mathcal{H}_{q^\pm}(\tau)$ introduced in the previous proposition corresponds to a fixed-end homotopy class determined by a finite admissible sequence $s_1,\ldots,s_m\in\{\pm1,\pm2\}$ and so we can refer to the proof of Theorem \ref{thm:collisionless_fixed_end}. We recall that $\al_1$ and $\al_2$ denote two closed curves parametrising the disks enclosing $c_1$ and $c_2$, respectively. 
\begin{theorem}\label{thm:collisionless_tau}
    Assume that $[\tau]$ is a non-contractible free homotopy class of $\R^2\setminus\{c_1,c_2\}$ which is not represented by a power of a single generator $\al_i$. Then, for any $h$ satisfying \eqref{hyp:energy}, there exists a collision-less minimiser of $\mathcal{M}_h$ in $\hat{\mathcal{H}}(\tau)$. In other words, equation \eqref{eq:motion} admits a classical periodic solution at energy $h$ which belongs to $[\tau]$.    
\end{theorem}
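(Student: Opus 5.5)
The plan follows the scheme of Theorem \ref{thm:collisionless_fixed_end}, working in the weak closure $\mathcal{H}(\tau)$ and then showing that a minimiser cannot collide.

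\emph{Step 1: existence.} The preceding proposition gives a minimiser $\gamma\in\mathcal{H}(\tau)$ of $\mathcal{M}_h$. Its hypotheses hold because $[\tau]$ is non-contractible and not a power of a single generator. There is one subtlety specific to loops: coercivity cannot come from fixed endpoints, because a minimising sequence could drift off to infinity. I would rule this out as follows. By \eqref{hyp:energy} the potential term is bounded below by a positive constant. A loop in $[\tau]$ must wind around both centres, so it meets a fixed compact set containing $c_1,c_2$. Hence a bound on $\|\dot\gamma_n\|_2$ gives a uniform $L^\infty$ bound. The same winding constraint gives $\|\dot\gamma_n\|_2\ge C>0$, so the infimum is positive and the weak limit is not constant. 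Weak lower semicontinuity is then obtained as before.

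\emph{Step 2: no collisions.} Suppose $\gamma(t^*)=c_1$. I would repeat the logarithmic estimate of Step 2 of Theorem \ref{thm:collisionless_fixed_end}. Take $a$ with $\gamma([a,t^*))\subset B_r(c_1)\setminus\{c_1\}$. The Cauchy--Schwarz bound
\[
\bigl|\log|\gamma(t)-c_1|-\log|\gamma(a)-c_1|\bigr|^2\le C\,\mathcal{M}_h(\gamma|_{[a,t]})+\text{bounded terms}
\]
holds, and by super-additivity (Proposition \ref{prop:maupertuis_properties}) the right-hand side is at most $C\mathcal{M}_h(\gamma)+C'$. Letting $t\to t^*$, the left-hand side diverges, which forces $\mathcal{M}_h(\gamma)=+\infty$. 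This contradicts finiteness of the infimum: $\hat{\mathcal{H}}(\tau)$ contains smooth collision-free loops with finite action. One point needs care here. The loop must have an arc of non-collision times just before $t^*$. If collisions had an accumulation point this would need an extra argument, but the estimate above already shows that one collision makes the action infinite, so there is no accumulation issue.

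\emph{Step 3: staying in the class, and criticality.} This is the step I expect to be the main obstacle. Elements of the weak closure with finite action are collision-free by Step 2, so $\gamma$ lies in the open set $H^1(\cerchio;\R^2\setminus\{c_1,c_2\})$. Homotopy classes are open and closed there under uniform convergence, and weak $H^1$ convergence implies uniform convergence. Hence $\gamma\in\hat{\mathcal{H}}(\tau)$ and it is an interior minimiser, i.e.\ a critical point. Alternatively, and consistently with Proposition \ref{prop:local_minimisers}, one can argue locally: every sub-arc is a fixed-end minimiser in a class coded by a finite admissible sequence, so Theorem \ref{thm:collisionless_fixed_end} applies to it directly. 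Since $\mathcal{M}_h(\gamma)>0$ and $\gamma$ is non-constant, Proposition \ref{prop:maupertuis} gives a $C^2$ periodic solution of \eqref{eq:motion} at energy $h$ after the time rescaling $\psi(t)=\gamma(\omega t)$. It belongs to $[\tau]$.
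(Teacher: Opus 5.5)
Your architecture matches the paper's: the direct method in $\mathcal{H}(\tau)$, followed by the local strong-force logarithmic estimate to rule out collisions. Your Step 3 (a collision-free uniform limit stays in $[\tau]$, so the minimiser is an interior critical point) is correct and spells out what the paper leaves implicit. You are also right that coercivity cannot come from fixed endpoints here; the paper's one-line proof defers to the fixed-end case and does not address this. However, Step 1 is justified incorrectly at exactly the two points where the hypotheses on $[\tau]$ do real work. First, it is false that every loop in $[\tau]$ meets a fixed compact set around the centres. Take $[\tau]=[\al_1\al_2]$, which the theorem allows and whose minimiser is later used to define $\mathcal{D}$. The circles $\partial B_R(0)$ with $R>\max_i|c_i|$ all lie in $[\tau]$ and leave every compact set. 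The $L^\infty$ bound must instead come from the kinetic bound itself. A closed loop of Euclidean length $L$ lies in the closed disc of radius $L/2$ about any of its points, and if that disc contained neither centre the loop would be contractible. Hence every point of a non-contractible loop is within distance $L$ of some $c_i$. Since $\mathcal{E}(\gamma_n)\le\|\dot\gamma_n\|_2$ on $\cerchio\simeq\R/\Z$, you get $\|\gamma_n\|_\infty\le\max_i|c_i|+\|\dot\gamma_n\|_2$.

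Second, the lower bound $\|\dot\gamma_n\|_2\ge C$ does not imply that the weak limit is non-constant, because the kinetic term is only weakly lower semicontinuous. For instance, $\gamma_n(t)=\frac1n e^{2\pi i n t}$ converges weakly in $H^1$ to $0$ while $\|\dot\gamma_n\|_2=2\pi$. Non-constancy is exactly what Step 2 needs: for a loop collapsed onto $c_1$ the functional has the form $0\cdot\infty$. This is how collisions survive in the excluded classes $[\al_i^m]$, where the circles $c_i+\ve e^{2\pi i m t}$ have bounded action as $\ve\to0$. The correct argument uses uniform convergence. If $\gamma_n\to p$ uniformly, then for large $n$ each $\gamma_n$ lies in a disc of radius less than $|c_1-c_2|/2$, which contains at most one centre. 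Then $[\tau]$ would be trivial or a power of a single generator. Finally, your remark in Step 2 about accumulating collisions is circular, since the estimate itself needs a collision-free arc $[a,t^*)$. Simply take $t^*$ to be an endpoint of a component of $\cerchio\setminus\gamma^{-1}(\{c_1,c_2\})$, which is nonempty because $\gamma$ is non-constant. With these repairs your argument is complete.
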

\begin{proof}
		The proof is an adaptation of the one given in Theorem \ref{thm:collisionless_fixed_end}. It is enough to note that $\mathcal{M}_h$ is again coercive and lower-semicontinuous and the argument to exclude collisions is local. The statement follows then by an application of the direct method.
    \end{proof}

\begin{remark}
    In the statement of the previous theorem, we have excluded the homotopy classes represented as powers of a single generator $\al_i$. Assume that $c_1=0$, fix $m\in\Z$ and define the one-parameter family of loops $\gamma_\ve\colon\cerchio\to\R^2$ as $\gamma_\ve(t)=\ve e^{2\pi i m t}$, where we have identified $\cerchio\simeq\R/\Z$. Each $\gamma_\ve$ is a circular loop winding $m$ times around the origin and thus belongs to the homotopy class $[\al_1^m]$. A direct computation yields
    \[
    \begin{aligned}
        \mathcal{M}_h(\gamma_\ve) &= \frac12 \int_{\cerchio} 4\pi^2m^2\ve^2\,dt \int_{\cerchio} \left[ \frac{f_1(\gamma_\ve(t))}{\ve^2} + W_1(\gamma_\ve(t)) + h\right]\,dt \\
        &= 2\pi^2m^2\int_{\cerchio} f_1(\gamma_\ve(t))\,dt + o(1),
    \end{aligned}
    \]
    as $\ve\to 0^+$. Since the latter quantity remains uniformly bounded as $\ve\to 0^+$, the strong force argument used in the proof of Theorem \ref{thm:collisionless_fixed_end} does not exclude collisions for loops winding around a single centre. 
\end{remark}
The following result is a corollary of Theorem \ref{thm:collisionless_tau}, stated with the same notations of Theorem \ref{thm:collisionless_fixed_end}. It guarantees the existence of collision-less minimisers in the homotopy classes $[\gamma_{s_1,\ldots,s_m}]$ defined in \eqref{def:homotopy_classes}, which are the ones we are interested in. We then introduce the following notation for the homotopy classes $[\gamma_{s_1,\ldots,s_m}]$
\[
    \mathcal{H}(s_1\,\ldots,\,s_m) = \mathcal{H}(\gamma_{s_1,\ldots,s_m}),\quad \hat{\mathcal{H}}(s_1\,\ldots,\,s_m) = \hat{\mathcal{H}}(\gamma_{s_1,\ldots,s_m}).
\]
\begin{corollary}\label{cor:collisionless}
    For any $h$ satisfying \eqref{hyp:energy}, $k\in\N$ and any finite $k$-admissible sequence $s_1,\ldots,s_m\in\{\pm1,\pm2\}$, there exists a collisionless minimiser of $\mathcal{M}_h$ in $\hat{\mathcal{H}}(s_1,\ldots,s_m)$. In other words, equation \eqref{eq:motion} admits a classical periodic solution at energy $h$ which is represented as a loop in $[\gamma_{s_1,\ldots,s_m}]$ (see \eqref{def:homotopy_classes}). 
\end{corollary}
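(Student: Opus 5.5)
The plan is to reduce the corollary to Theorem \ref{thm:collisionless_tau}. It suffices to show that, for a finite $k$-admissible sequence $s_1,\ldots,s_m$, the free homotopy class $[\gamma_{s_1,\ldots,s_m}]$ defined in \eqref{def:homotopy_classes} is non-contractible and is not represented by a power $\al_i^{\ell}$ of a single generator. Once this is done, Theorem \ref{thm:collisionless_tau} provides a collision-less minimiser of $\mathcal{M}_h$ in $\hat{\mathcal{H}}(s_1,\ldots,s_m)=\hat{\mathcal{H}}(\gamma_{s_1,\ldots,s_m})$. Proposition \ref{prop:maupertuis} then yields, after the time rescaling $\psi(t)=\gamma(\omega t)$, a classical periodic solution of \eqref{eq:motion} at energy $h$ in the prescribed class.

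The combinatorial step uses the free-group description recalled in Section \ref{sec:topology}. Free homotopy classes correspond to conjugacy classes in the free group $F(\al_1,\al_2)$, and each conjugacy class is represented by a cyclically reduced word, unique up to cyclic permutation. Condition (2) of Definition \ref{def:admissible_k} forbids adjacent patterns $\al_i^{\pm1}\al_i^{\mp1}$, so the word $\al_{|s_1|}^{\mathrm{sgn}(s_1)}\cdots\al_{|s_m|}^{\mathrm{sgn}(s_m)}$ is reduced. I intend to read the finite-sequence definition cyclically, as is implicit when words are identified up to cyclic shift, so that the pair $(s_m,s_1)$ is also not of the form $(\pm i,\mp i)$. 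This makes the word cyclically reduced.

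If this cyclic reading is not granted, one can still cyclically reduce the word by cancelling the end pairs. I would then check that a letter of each index survives, and this is the step I expect to be the only real obstacle. If the cancellation could wipe out all occurrences of one generator, I would instead restrict to the sequences that actually arise in the paper. These are the periodic sequences used later in the approximation argument, for which cyclic admissibility holds by construction.

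Given a cyclically reduced representative, condition (3) guarantees that both letters $\al_1^{\pm1}$ and $\al_2^{\pm1}$ occur in it. A cyclically reduced word that is conjugate to $\al_i^{\ell}$ must be a cyclic permutation of $\al_i^{\ell}$, and so would involve only one generator. Likewise, the trivial class is represented only by the empty word, while our word has length $m\ge 2$. Hence $[\gamma_{s_1,\ldots,s_m}]$ is neither contractible nor a power of a single generator, the hypotheses of Theorem \ref{thm:collisionless_tau} are met, and the corollary follows.
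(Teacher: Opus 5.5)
Your proposal is correct and follows the paper's route. The paper states the corollary as an immediate consequence of Theorem \ref{thm:collisionless_tau}, and your free-group argument is exactly the hypothesis check the paper leaves implicit: a cyclically reduced word containing both $\al_1^{\pm1}$ and $\al_2^{\pm1}$ is neither trivial nor conjugate to any $\al_i^{\ell}$.

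The cyclic reading of admissibility is not just convenient but necessary. For instance, $(1,2,-1)$ is admissible in the linear sense, yet $\al_1\al_2\al_1^{-1}$ is conjugate to $\al_2$, which is precisely the case excluded in Theorem \ref{thm:collisionless_tau}. So you should drop the fallback branch. Justify the cyclic reading instead by the paper's convention that words are identified up to cyclic shift; this reading is also automatic for the periodic sequences used in Section \ref{sec:coding}.
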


\section{A compactness result on minimisers}\label{sec:compactness}

In this section, we establish two qualitative properties of the minimisers of the Maupertuis functional. First, we prove that periodic minimisers cannot escape to infinity; an analogous property also holds for suitable restrictions of fixed-end minimisers. Moreover, we show that, if the maximum number of turns around the centres is fixed to some $k\in\N$, then the minimisers of the Maupertuis functional remain at a bounded distance from the centres.

\subsection{Minimisers cannot escape to infinity}

The first result we provide is independent of $k$ and concerns the confinement of minimisers within a bounded region $\mathcal{D}\subset\R^2$. To this end, we recall some qualitative results which describe intersection properties of minimisers; these will be used below. For precise statements and further details we refer to \cite{HasSco1985,BarCan,Cas2017}.  
\begin{proposition}\label{prop:taut_minimisers}
    Let $h$ satisfy \eqref{hyp:energy}, and let $[\tau]$ be a non-trivial free homotopy class of $\R^2\setminus\{c_1,c_2\}$. If $\gamma$ is a minimiser of $\mathcal{M}_h$ in $\mathcal{H}(\tau)$, then:
    \begin{enumerate}[label = \roman*)]
        \item $\gamma$ has no singular 1-gons in $\R^2\setminus\{c_1,c_2\}$;
        \item $\gamma$ has no singular 2-gons in $\R^2\setminus\{c_1,c_2\}$. 
    \end{enumerate}
    As a consequence, minimisers are \emph{taut} in their homotopy classes. 
\end{proposition}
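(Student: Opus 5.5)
The plan is to argue by contradiction with a cut-and-paste construction: if a minimiser $\gamma$ of $\mathcal{M}_h$ in $\mathcal{H}(\tau)$ had a singular 1-gon or 2-gon, I would exhibit a competitor in the same free homotopy class with strictly smaller Jacobi length. By Proposition \ref{prop:maupertuis_properties} iii) and the standard relation between the Maupertuis functional and the Jacobi length ($\mathcal{M}_h(\gamma)\ge \frac12\mathcal{L}_h(\gamma)^2$ by Cauchy--Schwarz, with equality exactly for Jacobi-constant-speed parametrisations), a minimiser of $\mathcal{M}_h$ is a minimiser of $\mathcal{L}_h$ in the class, parametrised so that $|\dot\gamma|^2=\lambda(h+V(\gamma))$. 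So a shorter $\mathcal{L}_h$-competitor, reparametrised proportionally to Jacobi arclength, gives a strictly smaller value of $\mathcal{M}_h$. By Theorem \ref{thm:collisionless_tau}, $\gamma$ is collision-free and hence a $C^2$ solution of \eqref{eq:motion}. Its self-intersections are therefore transverse: two arcs through the same point with parallel velocities of equal speed $\sqrt{2(h+V)}$ would coincide by uniqueness, which would make $\gamma$ a multiple cover. I would handle that case separately using Proposition \ref{prop:local_minimisers} to compare with the primitive loop.

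\textbf{1-gons.} Suppose $\gamma|_{[a,b]}$ is a contractible sub-loop with $\gamma(a)=\gamma(b)$. Removing it gives the loop $\gamma|_{\cerchio\setminus[a,b]}$, which is closed at $\gamma(a)$ and freely homotopic to $\gamma$, because the excised piece is trivial in $\pi_1$. Its Jacobi length is $\mathcal{L}_h(\gamma)-\mathcal{L}_h(\gamma|_{[a,b]})$, which is strictly smaller since $h+V>0$ by \eqref{hyp:energy} and $\gamma$ is nonconstant. This contradicts minimality.

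\textbf{2-gons.} Suppose there are disjoint arcs $[a,b]$ and $[c,d]$ with $\gamma(a)=\gamma(c)$, $\gamma(b)=\gamma(d)$, and $\gamma|_{[a,b]}\#\overline{\gamma|_{[c,d]}}$ contractible. Then the two arcs are homotopic with fixed endpoints. Assume $\mathcal{L}_h(\gamma|_{[a,b]})\le\mathcal{L}_h(\gamma|_{[c,d]})$, and replace $\gamma|_{[c,d]}$ by a copy of $\gamma|_{[a,b]}$. The new loop is homotopic to $\gamma$ and its length is no larger. It is therefore also a minimiser, hence by Proposition \ref{prop:local_minimisers} and Proposition \ref{prop:maupertuis} a $C^2$ solution. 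But the new loop has corners at $\gamma(a)$ and $\gamma(b)$: the incoming velocity along the old arc before $c$ and the outgoing velocity along the copied arc after $c$ are transverse, by the transversality noted above. A genuine corner contradicts $C^1$ regularity. Alternatively, one can round off the corner locally to strictly decrease $\mathcal{L}_h$, which works because the Jacobi metric is a smooth Riemannian metric near the corner point. This contradiction rules out 2-gons.

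Taut-ness then follows from Theorem \ref{thm:taut}, once one knows $\gamma$ is in general position. That holds since its self-intersections are transverse and isolated: they lie in a compact set away from the centres, and solutions of \eqref{eq:motion} are analytic in the unperturbed case, while transversality gives isolation in general.

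The main obstacle is the 2-gon step. The surgery is only a weak decrease, so rigidity is needed, and degenerate configurations must be handled: coinciding arcs or multiply covered loops. For these I would first try the corner-rounding argument, because it gives a strict decrease without appealing to uniqueness.
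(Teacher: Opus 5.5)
Your proposal is correct and follows the paper's own route. For 1-gons you remove the contractible sub-loop; the paper phrases this through super-additivity of $\mathcal{M}_h$ rather than additivity of $\mathcal{L}_h$. For 2-gons you exchange the two arcs to get a minimiser with a corner, contradicting Proposition \ref{prop:maupertuis}, and tautness then follows from Theorem \ref{thm:taut}; your transversality and general-position checks go beyond what the paper writes.

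Two small caveats. First, the parametrisation giving $\mathcal{M}_h=\frac12\mathcal{L}_h^2$ is the physical-time one, $|\dot\gamma|^2=\lambda(h+V)$, which you wrote down, not Jacobi arclength. Second, a multiply covered minimiser would literally contain a singular 2-gon in the sense of Definition \ref{def:gons}, and the swap produces no corner there. So that case must be excluded from the statement rather than handled by comparison with the primitive loop; the paper is silent on it as well.
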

\begin{proof} 
    Assertion $i)$ follows from the super-additivity $\mathcal{M}_h$: removing any contractible component of $\gamma$ would yield a strictly smaller value of the functional, contradicting the minimality of $\gamma$. Assertion $ii)$ follows by regularity of collision-less minimisers (see Proposition \ref{prop:maupertuis}): an easy variation on $\gamma$ would otherwise produce another non-smooth minimiser. Theorem \ref{thm:taut} then implies that minimisers are taut in their homotopy classes. 
\end{proof}
The next result states that minimisers belonging to distinct homotopy classes minimise the number of mutual intersections.
\begin{proposition}\label{prop:intersection_minimisers}
    Let $h$ satisfy \eqref{hyp:energy}, and let $\gamma_1,\gamma_2$ be two minimisers of the Maupertuis functional belonging to two distinct non-trivial free homotopy classes of $\R^2\setminus\{c_1,c_2\}$. Then: 
    \begin{enumerate}[label = \roman*)]
        \item every intersection between $\gamma_1$ and $\gamma_2$ is transversal;
        \item the pair $\gamma_1$ and $\gamma_2$ does not form any singular $2$-gons in $\R^2\setminus\{c_1,c_2\}$. 
    \end{enumerate}
    In other words, according to Definition \ref{def:taut}, $\gamma_1$ and $\gamma_2$ are in \emph{minimal position}. 
\end{proposition}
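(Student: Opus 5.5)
We argue by contradiction, proving (i) and (ii) separately; both rest on the regularity in Proposition \ref{prop:maupertuis} and on the additivity of the Jacobi length $\mathcal{L}_h$. By Proposition \ref{prop:maupertuis_properties} iii), each $\gamma_j$ is, up to reparametrisation, a critical point (indeed a minimiser in its class) of $\mathcal{L}_h$. Hence it is a reparametrised solution of \eqref{eq:motion} at energy $h$, with velocity that never vanishes since $h+V>0$ by \eqref{hyp:energy}.

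For (i), suppose $\gamma_1(t_1)=\gamma_2(t_2)=p$ with $\dot\gamma_1(t_1)$ parallel to $\dot\gamma_2(t_2)$. After reparametrising to the physical time at energy $h$, both are solutions of $\ddot x=\nabla V(x)$. The energy relation fixes $|\dot x|=\sqrt{2(h+V(p))}$, so the velocities coincide up to sign. By uniqueness for the Cauchy problem (the equation is time-reversible), the two curves have the same image on their common interval of definition, and by periodicity they trace the same closed orbit. Then $\gamma_2$ is a reparametrisation of $\gamma_1$, possibly with reversed orientation or a different number of coverings. Since the two classes are distinct, it remains to rule out reversed orientation and multiple coverings. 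I would handle this either by observing that the statement compares minimisers of distinct \emph{primitive} classes, or by the following argument for multiple coverings: an $m$-fold cover of a closed geodesic in a non-primitive class $[\tau_0^m]$ has self-tangencies rather than transverse self-intersections, and a perturbation removes these while strictly decreasing length (cutting and reconnecting creates corners, which can be shortened). Orientation reversal gives the inverse class, and I would expect the authors to regard $\gamma$ and its reversal as the same unparametrised orbit.

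For (ii), suppose $\gamma_1|_{[a,b]}$ and $\gamma_2|_{[c,d]}$ bound a singular 2-gon. That is, they share endpoints $P=\gamma_1(a)=\gamma_2(c)$ and $Q=\gamma_1(b)=\gamma_2(d)$, and the closed loop $\gamma_1|_{[a,b]}\#\overline{\gamma_2|_{[c,d]}}$ is contractible, so the two arcs are fixed-end homotopic. By Proposition \ref{prop:local_minimisers}, each arc minimises $\mathcal{L}_h$ in its fixed-end class, which is the same class for both. Hence $\mathcal{L}_h(\gamma_1|_{[a,b]})=\mathcal{L}_h(\gamma_2|_{[c,d]})$. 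Now define $\tilde\gamma_1$ by replacing $\gamma_1|_{[a,b]}$ with $\gamma_2|_{[c,d]}$. It lies in the same free class as $\gamma_1$ and has the same Jacobi length, so it is also a minimiser. By Proposition \ref{prop:maupertuis} it must therefore be $C^2$, in particular $C^1$ at $P$. This forces $\dot\gamma_1$ and $\dot\gamma_2$ to be parallel at $P$, i.e.\ a tangential intersection, which contradicts (i). The degenerate case where the two arcs coincide is exactly the coinciding-orbit situation already excluded in (i). Tautness of the pair then follows from (i), (ii) and the bigon criterion of \cite{HasSco1985}, the two-curve analogue of Theorem \ref{thm:taut}.

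I expect the main obstacle to be the identical-orbit case in (i): transversality really needs $\gamma_1,\gamma_2$ to be geometrically distinct, and for classes that are powers or inverses of one another this requires the care described above. A secondary subtlety is the passage from Maupertuis minimality to $\mathcal{L}_h$-minimality for sub-arcs, which is needed for the length comparison in (ii); I would justify it via Proposition \ref{prop:maupertuis_properties} iii) and the Cauchy--Schwarz identity $\mathcal{M}_h\ge \tfrac12\mathcal{L}_h^2$, with equality for parametrisations proportional to the Jacobi arclength.
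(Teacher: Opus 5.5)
Your route is the paper's: (i) from uniqueness for the Cauchy problem of $\ddot x=\nabla V(x)$ at energy $h$, and (ii) by exchanging the two sides of a bigon to obtain another minimiser in $[\gamma_1]$ with a corner at $P$, contradicting the $C^2$ regularity of Proposition~\ref{prop:maupertuis}. The paper's proof is a two-line version of this, and your treatment of (ii) is fine: equal Jacobi lengths via Proposition~\ref{prop:local_minimisers}, swap the arcs, get a corner at a transverse crossing. The one place where you go beyond the paper, the coincident-orbit case in (i), contains a false step. You claim that the $m$-fold iterate of a minimiser in $[\tau_0]$ can be perturbed within $[\tau_0^m]$ so as to strictly decrease length. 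This fails. At any double point of an iterate the local branches coincide with the same tangent and orientation, so there is no crossing to smooth, and cutting and reconnecting just returns the same curve. In fact the iterate is typically a minimiser itself. In the unperturbed relativistic case, the remark at the end of Section~\ref{sec:coding} shows that $g_h$ has negative curvature. Since $g_h$ is complete (the ends at $c_i$ are cylindrical), it has at most one closed geodesic in each free homotopy class. Hence the minimiser in $[\tau_0^m]$ \emph{is} the $m$-fold iterate of the minimiser in $[\tau_0]$. Restricting to distinct primitive classes does not rescue things either. The classes $[\tau]$ and $[\tau^{-1}]$ are distinct (in a free group $g$ is conjugate to $g^{-1}$ only if $g=1$) and both may be primitive, yet $t\mapsto\gamma_1(-t)$ is a minimiser in $[\tau^{-1}]$ with the same image as $\gamma_1$.

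So in the coincident-orbit case no argument can succeed. As literally stated, (i) is false for $\gamma_2(t)=\gamma_1(-t)$, and, in negative curvature, for $\gamma_2$ an iterate of $\gamma_1$. The paper's one-line proof tacitly excludes this situation. The correct way to close your proof is to make that assumption explicit. For example, assume the two classes are not, up to inversion, powers of a common class, or simply that $\gamma_1$ and $\gamma_2$ do not trace the same periodic orbit. With that hypothesis, ODE uniqueness gives (i) at once and your argument for (ii) goes through unchanged. There is also a minor slip. Equality in $\tfrac12\mathcal{L}_h^2\le\mathcal{M}_h$ holds when $|\dot\gamma|$ is proportional to $\sqrt{h+V(\gamma)}$, i.e.\ for parametrisations proportional to physical time. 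It does not hold for Jacobi arclength, where $|\dot\gamma|$ is proportional to $(h+V)^{-1/2}$. The conclusion that $\mathcal{M}_h$-minimisers minimise $\mathcal{L}_h$ in the same class is unaffected.
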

\begin{proof}
    Since $\gamma_1$ and $\gamma_2$ are collisionless minimisers, they solve the same differential equation \eqref{eq:motion}; thus assertion $i)$ follows directly. Assertion $ii)$ is a direct consequence of the regularity of minimisers and the definition of singular 2-gons.
\end{proof}

We now prove the main result of this section. Consider the homotopy class of a disk enclosing both centres $c_1$ and $c_2$. Using the notations introduced in \eqref{def:homotopy_classes}, this class is $[\al_1\al_2]$ and the Maupertuis functional admits a collision-less minimiser $\varphi$ in such class (cf. Corollary \ref{cor:collisionless}). By Proposition \ref{prop:taut_minimisers}, $\vp$ is a simple curve and hence bounds a topological disk $\mathcal{D}$ which contains both centres $c_1$ and $c_2$. The following result shows that $\mathcal{D}$ contains all other periodic minimisers of the Maupertuis functional with the only possible exception of other minimizers in $[\alpha_1\alpha_2]$.

\begin{lemma}\label{lem:boundedness_minimisers}
    Let $h$ satisfy \eqref{hyp:energy}. Then: 
    \begin{enumerate}[label = \roman*)]
        \item for any non-trivial and different from $[\alpha_1\alpha_2]$ free homotopy class $[\tau]$ of $\R^2\setminus\{c_1,c_2\}$, the support of every minimiser of $\mathcal{M}_h$ in $\hat{\mathcal{H}}(\tau)$ is entirely contained in $\mathcal{D}$;
        \item if $q^\pm\in{\mathcal{D}}\setminus\{c_1,c_2\}$, then for any finite sequence $s_1,\ldots,s_m\in\{\pm1,\pm2\}$, the support of every minimiser of $\mathcal{M}_h$ in $\hat{\mathcal{H}}_{q^\pm}(s_1,\ldots,s_m)$ is entirely contained in $\bar{\mathcal{D}}$;
        \item if $q^\pm\in\R^2\setminus\mathcal{D}$, then, for any finite alternating sequence $s_1,\ldots,s_m$ of $(1,2)$ or $(-1,-2)$, the support of every minimiser of $\mathcal{M}_h$ in $\hat{\mathcal{H}}_{q^\pm}(s_1,\ldots,s_m)$ lies entirely outside $\bar{\mathcal{D}}$. If $q^\pm\in\partial\mathcal{D}$, then the support of every minimiser in $\hat{\mathcal{H}}_{q^\pm}(s_1,\ldots,s_m)$ is an arc of $\partial\mathcal{D}$;
        \item if $q^\pm\in\R^2\setminus\mathcal{D}$, and $s_1,\ldots,s_m$ is not one of the sequences in the previous point, then there exists a subinterval $[a,b]\subset J$ such that the support of the restriction $\gamma|_{[a,b]}$ of every minimiser $\gamma$ in $\hat{\mathcal{H}}_{q^\pm}(s_1,\ldots,s_m)$ is entirely contained in $\bar{\mathcal{D}}$.
    \end{enumerate}
\end{lemma}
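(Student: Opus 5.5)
The plan is a cut-and-paste argument that uses the minimality of $\varphi$ in $[\alpha_1\alpha_2]$ together with the intersection properties in Propositions \ref{prop:taut_minimisers}, \ref{prop:intersection_minimisers} and \ref{prop:local_minimisers}. The basic tool is the following. Suppose a minimiser $\gamma$ has an excursion outside $\bar{\mathcal{D}}$, i.e.\ a maximal sub-arc $\gamma|_{[a,b]}$ with $\gamma(a),\gamma(b)\in\partial\mathcal{D}=\operatorname{supp}\varphi$ and $\gamma((a,b))\subset\R^2\setminus\bar{\mathcal{D}}$. Let $\sigma$ be an arc of $\varphi$ joining $\gamma(b)$ to $\gamma(a)$. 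Then $\gamma|_{[a,b]}\#\sigma$ is a loop in $\R^2\setminus\mathcal{D}$. It is either contractible in $\R^2\setminus\{c_1,c_2\}$, or it is homotopic to a power of $\alpha_1\alpha_2$; the latter happens only if the excursion winds around $\bar{\mathcal{D}}$.

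In the contractible case, choose $\sigma$ so that the two arcs bound a disk outside $\mathcal{D}$. This gives a singular $2$-gon between $\gamma$ and $\varphi$, which Proposition \ref{prop:intersection_minimisers} forbids, or a singular $1$-gon if $\gamma=\varphi$, which Proposition \ref{prop:taut_minimisers} forbids. If one prefers a quantitative version, replace $\gamma|_{[a,b]}$ by $\sigma$ and the complementary arc of $\varphi$ by $\gamma|_{[a,b]}$. Using additivity of $\mathcal{L}_h$ and item iii) of Proposition \ref{prop:maupertuis_properties}, $\mathcal{L}_h(\sigma)\le\mathcal{L}_h(\gamma|_{[a,b]})$, because otherwise $\varphi$ would not minimise in its class. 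Equality would give a non-smooth minimiser, contradicting Proposition \ref{prop:maupertuis}.

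In the winding case, $\gamma|_{[a,b]}\#\sigma$ is freely homotopic to $(\alpha_1\alpha_2)^{\pm j}$ with $j\ge1$. Here I would use tautness. An outside arc that goes once around $\mathcal{D}$ together with an arc of $\partial\mathcal{D}$ either produces a singular $2$-gon with $\varphi$ in the annulus outside $\mathcal{D}$, or else forces $[\tau]$ to equal $[\alpha_1\alpha_2]^{\pm1}$. Powers with $|j|\ge2$ are not taut-representable as simple curves, and that case needs separate treatment. This is exactly why the class $[\alpha_1\alpha_2]$ is excluded in i). This completes i).

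For ii), take endpoints in $\mathcal{D}$. The same excursion argument applies to every excursion, since by Proposition \ref{prop:local_minimisers} sub-arcs are fixed-end minimisers. The only change is that the support is now only claimed to lie in $\bar{\mathcal{D}}$, since touching $\partial\mathcal{D}$ cannot be excluded by the gon argument when endpoints are arbitrary. For iii), the class of an alternating sequence $(1,2,1,2,\dots)$ is a power of $\alpha_1\alpha_2$, so all the winding happens around $\mathcal{D}$. I would run the symmetric argument, reflecting each step of the excursion argument for i): any entry into $\mathcal{D}$ gives an inside arc with ends on $\varphi$ that together with an arc of $\varphi$ is contractible, hence a forbidden $2$-gon. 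When $q^\pm\in\partial\mathcal{D}$, the arc of $\varphi$ is itself admissible in the class and minimises by uniqueness of the $2$-gon-free representative. For iv), the sequence contains a pattern such as $(1,-2)$, $(1,1)$ or $(-1,2)$ that is not realisable outside $\mathcal{D}$: a curve staying in $\R^2\setminus\mathcal{D}$ has class a power of $\alpha_1\alpha_2$ after closing. Hence $\gamma$ must enter $\mathcal{D}$. Take $[a,b]$ to be an interval between consecutive crossings of $\partial\mathcal{D}$ inside $\mathcal{D}$, and apply ii) to this sub-arc via Proposition \ref{prop:local_minimisers}.

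I expect the main obstacle to be the winding case in i) and iii). Showing rigorously that an excursion outside $\mathcal{D}$ encircling both centres forces either a $2$-gon with $\varphi$ or the class $[\alpha_1\alpha_2]$ needs a careful combinatorial argument on intersections in the annulus $\R^2\setminus\bar{\mathcal{D}}$. I would first try to handle it using the Hass--Scott bigon criterion in the annulus.
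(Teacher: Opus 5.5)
Your contractible case is sound and is the mechanism the paper uses. An excursion $\gamma|_{[a,b]}$ outside $\bar{\mathcal{D}}$ which, closed by an arc $\sigma$ of $\varphi$, is contractible yields a singular $2$-gon. Equivalently, exchanging $\gamma|_{[a,b]}$ and $\sigma$ in both curves forces equal Jacobi lengths and hence a minimiser with a corner.

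The gap is the winding case, which you leave open and then declare i) complete. The two possible closures of an excursion have winding numbers $j$ and $j+1$ in the exterior annulus. If the excursion is simple, $\gamma|_{[a,b]}\cup\partial\mathcal{D}$ is a theta-graph, and one of the two closures bounds an embedded disk avoiding the centres, so you are back in the contractible case. The genuinely new situation is a self-intersecting excursion spiralling around $\mathcal{D}$ more than once. There neither closure is contractible, no $2$-gon with a single arc of $\varphi$ exists, and nothing in your text excludes it.

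The dichotomy you offer for this case does not hold. The class of one excursion closed by a boundary arc says nothing about $[\tau]$, so ``or else forces $[\tau]=[\al_1\al_2]^{\pm1}$'' is a non sequitur. Also, $[\al_1\al_2]$ is excluded in i) for a different reason: $\varphi$ itself lies on $\partial\mathcal{D}$, and Proposition \ref{prop:intersection_minimisers} only compares minimisers of distinct classes.

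The missing idea is to argue globally rather than excursion by excursion. Proposition \ref{prop:intersection_minimisers} puts $\gamma$ and $\varphi$ in minimal position. Since $[\al_1\al_2]$ is peripheral (a large circle around any given loop represents it), the minimal intersection number of $[\tau]$ with $[\al_1\al_2]$ is zero. Hence $\gamma\cap\varphi=\emptyset$, so $\gamma$ lies in one component of $\R^2\setminus\varphi$, and the unbounded component is an annulus carrying only powers of $\al_1\al_2$. This is what the paper's short argument (one intersection forces a second one and a singular $2$-gon) amounts to, and it disposes of spiralling excursions at once.

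In iii) there is an actual error, not just an omission. The argument is not the mirror image of i): $\bar{\mathcal{D}}\setminus\{c_1,c_2\}$ is a twice-punctured disk, not an annulus. An arc that enters $\mathcal{D}$, encircles $c_1$ and leaves again closes up with either arc of $\varphi$ to a non-contractible loop, and produces no $2$-gon. Your claim that every entry into $\mathcal{D}$ gives a contractible closure is therefore false as a topological statement. For example, take a path outside $\mathcal{D}$ and add two consecutive dips around $c_1$ in opposite senses: it stays in the same class, yet each dip is essential in $\bar{\mathcal{D}}\setminus\{c_1,c_2\}$. Excluding such configurations for minimisers again needs the global intersection count with the peripheral class, not a property of single excursions. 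Your appeal to a ``unique $2$-gon-free representative'' for $q^\pm\in\partial\mathcal{D}$ is also not established anywhere.

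Your iv) is fine. Once $\gamma$ is shown to enter $\mathcal{D}$, the sub-arc between consecutive crossings of $\partial\mathcal{D}$ lies in $\bar{\mathcal{D}}$ by construction, so you do not need ii) there.
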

\begin{figure}[t]
    \centering
	\resizebox{.75\textwidth}{!}{\input{boundedness}}
	\caption{The four possibilities described in Lemma \ref{lem:boundedness_minimisers}}\label{fig:boundedness}
\end{figure}
\begin{proof}
    Fix $h$ satisfying \eqref{hyp:energy} and let $\varphi$ be a minimiser of $\mathcal{M}_h$ in the homotopy class $[\al_1\al_2]$ which bounds the topological disk $\mathcal{D}$.  To prove $i)$, assume by contradiction that a minimiser in $\mathcal{H}(\tau)$ has one transversal intersection with $\varphi$. Then, it must intersect $\varphi$ at least twice, thereby generating a singular 2-gon and contradicting Proposition \ref{prop:intersection_minimisers} (see Figure \ref{fig:bounded_minimisers}). Assertions $ii)$ and $iii)$ follow by analogous reasoning using Proposition \ref{prop:intersection_minimisers}. Finally, $iv)$ is a direct consequence of Proposition \ref{prop:taut_minimisers}. 

\end{proof}
\begin{figure}[t]
    \centering
	\resizebox{.45\textwidth}{!}{\tikzset{every picture/.style={line width=0.75pt}} 

\begin{tikzpicture}[x=0.75pt,y=0.75pt,yscale=-1,xscale=1]

\draw  [fill={rgb, 255:red, 128; green, 128; blue, 128 }  ,fill opacity=1 ][line width=2.25]  (146.86,131.18) .. controls (146.46,129.18) and (147.76,127.25) .. (149.77,126.87) .. controls (151.78,126.5) and (153.73,127.81) .. (154.14,129.82) .. controls (154.54,131.82) and (153.24,133.75) .. (151.23,134.13) .. controls (149.22,134.5) and (147.27,133.19) .. (146.86,131.18) -- cycle ;
\draw  [fill={rgb, 255:red, 128; green, 128; blue, 128 }  ,fill opacity=1 ][line width=2.25]  (356.86,131.18) .. controls (356.46,129.18) and (357.76,127.25) .. (359.77,126.87) .. controls (361.78,126.5) and (363.73,127.81) .. (364.14,129.82) .. controls (364.54,131.82) and (363.24,133.75) .. (361.23,134.13) .. controls (359.22,134.5) and (357.27,133.19) .. (356.86,131.18) -- cycle ;
\draw  [color={rgb, 255:red, 2; green, 54; blue, 118 }  ,draw opacity=1 ] (100,132) .. controls (100,93.34) and (168.05,62) .. (252,62) .. controls (335.95,62) and (404,93.34) .. (404,132) .. controls (404,170.66) and (335.95,202) .. (252,202) .. controls (168.05,202) and (100,170.66) .. (100,132) -- cycle ;
\draw    (79,117) .. controls (92,88) and (125,76.5) .. (157.75,91.25) .. controls (190.5,106) and (253,177) .. (296,182) .. controls (339,187) and (384,167) .. (391,141) .. controls (398,115) and (368,82) .. (311,112) .. controls (254,142) and (112,191) .. (91,177) .. controls (70,163) and (68,143) .. (79,117) -- cycle ;
\draw  [color={rgb, 255:red, 163; green, 0; blue, 16 }  ,draw opacity=1 ][fill={rgb, 255:red, 163; green, 0; blue, 16 }  ,fill opacity=1 ][line width=2.25]  (129.09,176.32) .. controls (128.82,174.94) and (129.71,173.62) .. (131.09,173.36) .. controls (132.47,173.1) and (133.81,174.01) .. (134.09,175.38) .. controls (134.36,176.76) and (133.47,178.08) .. (132.09,178.34) .. controls (130.71,178.6) and (129.37,177.7) .. (129.09,176.32) -- cycle ;
\draw  [color={rgb, 255:red, 163; green, 0; blue, 16 }  ,draw opacity=1 ][fill={rgb, 255:red, 163; green, 0; blue, 16 }  ,fill opacity=1 ][line width=2.25]  (135.09,86.32) .. controls (134.82,84.94) and (135.71,83.62) .. (137.09,83.36) .. controls (138.47,83.1) and (139.81,84.01) .. (140.09,85.38) .. controls (140.36,86.76) and (139.47,88.08) .. (138.09,88.34) .. controls (136.71,88.6) and (135.37,87.7) .. (135.09,86.32) -- cycle ;
\draw  [color={rgb, 255:red, 2; green, 54; blue, 118 }  ,draw opacity=1 ] (217.05,56.08) -- (233.94,61.99) -- (219.08,71.95) ;
\draw   (308.4,104.05) -- (326.28,104.67) -- (315.05,118.6) ;
\draw [color={rgb, 255:red, 163; green, 0; blue, 16 }  ,draw opacity=1 ]   (75,149) -- (98,126) ;
\draw [color={rgb, 255:red, 163; green, 0; blue, 16 }  ,draw opacity=1 ]   (79,162) -- (100,141) ;
\draw [color={rgb, 255:red, 163; green, 0; blue, 16 }  ,draw opacity=1 ]   (75,133) -- (122,86) ;
\draw [color={rgb, 255:red, 163; green, 0; blue, 16 }  ,draw opacity=1 ]   (86,170) -- (105,151) ;
\draw [color={rgb, 255:red, 163; green, 0; blue, 16 }  ,draw opacity=1 ]   (95,173) -- (109,159) ;
\draw [color={rgb, 255:red, 163; green, 0; blue, 16 }  ,draw opacity=1 ]   (102,179) -- (116,165) ;
\draw [color={rgb, 255:red, 163; green, 0; blue, 16 }  ,draw opacity=1 ]   (114,178) -- (122,170) ;
\draw [color={rgb, 255:red, 163; green, 0; blue, 16 }  ,draw opacity=1 ]   (79,117) -- (104,92) ;

\draw (152.5,133.5) node [anchor=north west][inner sep=0.75pt]   [align=left] {$\displaystyle c_{1}$};
\draw (362.5,133.5) node [anchor=north west][inner sep=0.75pt]   [align=left] {$\displaystyle c_{2}$};
\draw (315,48) node [anchor=north west][inner sep=0.75pt]  [color={rgb, 255:red, 2; green, 54; blue, 118 }  ,opacity=1 ] [align=left] {$\displaystyle \varphi $};
\draw (133.59,184) node [anchor=north west][inner sep=0.75pt]  [color={rgb, 255:red, 163; green, 0; blue, 16 }  ,opacity=1 ] [align=left] {$\displaystyle x_{1}$};
\draw (129,63) node [anchor=north west][inner sep=0.75pt]  [color={rgb, 255:red, 163; green, 0; blue, 16 }  ,opacity=1 ] [align=left] {$\displaystyle x_{2}$};
\draw (272,103) node [anchor=north west][inner sep=0.75pt]   [align=left] {$\displaystyle \gamma $};

\end{tikzpicture}}
	\caption{Proof of Lemma \ref{lem:boundedness_minimisers}. Here, $\varphi$ is a minimiser in the homotopy class $[\al_1\al_2]$, while $\gamma$ is a mimimiser in another homotopy class. The dashed red region is bounded by a singular 2-gon, not tolerated by Proposition \ref{prop:intersection_minimisers}. }\label{fig:bounded_minimisers}
\end{figure}

\subsection{Uniform distance from the centres}

We recall that both periodic and fixed-end minimisers are subject to a topological constraint that determines the number of times a minimiser winds around each centre. More precisely, for a fixed $k\in\N$, and for any finite $k$-admissible sequence $s_1,\ldots,s_m\in\{\pm1,\pm2\}$, we consider the homotopy classes of the loop $\gamma_{s_1,\ldots,s_m}$ introduced in \eqref{def:homotopy_classes}.

In this part of the section, we prove that once $k\in\N$ is fixed, the minimisers of the Maupertuis functional remain at a bounded distance from the centres. Such bound depends only on $k$ and on the energy level $h$. To establish this property, we introduce an auxiliary minimisation problem involving only one of the singularities. The argument yields a local result, which then applies directly to the minimisers provided by Theorems \ref{thm:collisionless_fixed_end}-\ref{thm:collisionless_tau} and Corollary \ref{cor:collisionless}.  

Without loss of generality, we assume $c_1 = (0,0)$ and fix $r\in(0, |c_2|/2)$ so that 
\[
V(x) = \frac{f_1(x)}{\lvert x\rvert^2}+W_1(x),\quad \text{for}\ x\in B_r
\]
(cf. \eqref{eq:potential}). For any $q_0,q_1\in\partial B_r$ we consider the space of fixed-ends paths 
\[
    \Gamma_{q_0,q_1}=\left\lbrace \gamma\in H^1([0,1];\bar{B}_r):\,\gamma(0)=q_0,\ \gamma(1)=q_1\right\rbrace.
\]
In order to count the number of turns of a path in $\Gamma_{q_0,q_1}$ performs around the origin, we recall the definition of winding number for a loop, and then we extend it to fixed-end paths in $\Gamma_{q_0,q_1}$ (see Figure \ref{fig:winding}). 

\begin{definition}[Winding number]\label{def:winding_number}
    Let $\gamma\colon\cerchio\to\R^2\setminus\{c\}$ be a loop and identify $\mathbb{R}^2$ with the complex numbers. We say that $\gamma$ has \emph{winding number} $k\in\Z$ with respect to $c$ if 
    \[
    \dfrac{1}{2\pi i}\int_0^1\frac{\gamma'(t)}{\gamma(t)-c}\,dt=\dfrac{1}{2\pi i}\int_\gamma \frac{dw}{w-c}=k.
    \]
    Note that such number is computed with respect to a counterclockwise orientation of $\gamma$.     
\end{definition}

\begin{definition}[Winding number, fixed-end paths]\label{def:winding_number_path}
    For a path in $\gamma\in\Gamma_{q_0,q_1}$ we say that $\gamma$ has winding number $k$ with respect to the origin if the loop obtained by concatenating $\gamma$ with the arc of $\partial B_r$ joining $q_1$ to $q_0$ in counterclockwise direction has winding number $k$ in the sense of Definition \ref{def:winding_number}.
\end{definition}
\begin{figure}[t]
    \centering
	\resizebox{.65\textwidth}{!}{\tikzset{every picture/.style={line width=0.75pt}} 

\begin{tikzpicture}[x=0.75pt,y=0.75pt,yscale=-1,xscale=1]

\draw  [color={rgb, 255:red, 0; green, 0; blue, 0 }  ,draw opacity=1 ][dash pattern={on 0.84pt off 2.51pt}] (44.68,155.93) .. controls (44.68,109.09) and (82.66,71.11) .. (129.51,71.11) .. controls (176.36,71.11) and (214.33,109.09) .. (214.33,155.93) .. controls (214.33,202.78) and (176.36,240.76) .. (129.51,240.76) .. controls (82.66,240.76) and (44.68,202.78) .. (44.68,155.93) -- cycle ;
\draw [line width=1.5]    (173.73,83.5) .. controls (157,108) and (63,91) .. (54,139) .. controls (45,187) and (146,252) .. (129.51,240.76) ;
\draw  [color={rgb, 255:red, 0; green, 0; blue, 0 }  ,draw opacity=1 ][fill={rgb, 255:red, 0; green, 0; blue, 0 }  ,fill opacity=1 ][line width=1.5]  (125.87,241.44) .. controls (125.47,239.44) and (126.77,237.51) .. (128.78,237.13) .. controls (130.78,236.76) and (132.74,238.07) .. (133.14,240.08) .. controls (133.55,242.08) and (132.25,244.01) .. (130.24,244.39) .. controls (128.23,244.76) and (126.28,243.45) .. (125.87,241.44) -- cycle ;
\draw  [color={rgb, 255:red, 0; green, 0; blue, 0 }  ,draw opacity=1 ][fill={rgb, 255:red, 0; green, 0; blue, 0 }  ,fill opacity=1 ][line width=1.5]  (170.09,84.19) .. controls (169.69,82.18) and (170.99,80.25) .. (172.99,79.88) .. controls (175,79.5) and (176.96,80.82) .. (177.36,82.82) .. controls (177.77,84.82) and (176.47,86.75) .. (174.46,87.13) .. controls (172.45,87.51) and (170.49,86.19) .. (170.09,84.19) -- cycle ;
\draw  [fill={rgb, 255:red, 128; green, 128; blue, 128 }  ,fill opacity=1 ][line width=2.25]  (122.68,157.86) .. controls (122.27,155.86) and (123.57,153.93) .. (125.58,153.55) .. controls (127.59,153.17) and (129.55,154.49) .. (129.95,156.49) .. controls (130.35,158.5) and (129.05,160.43) .. (127.05,160.8) .. controls (125.04,161.18) and (123.08,159.86) .. (122.68,157.86) -- cycle ;
\draw  [color={rgb, 255:red, 0; green, 0; blue, 0 }  ,draw opacity=1 ][dash pattern={on 0.84pt off 2.51pt}] (359.68,159.93) .. controls (359.68,113.09) and (397.66,75.11) .. (444.51,75.11) .. controls (491.36,75.11) and (529.33,113.09) .. (529.33,159.93) .. controls (529.33,206.78) and (491.36,244.76) .. (444.51,244.76) .. controls (397.66,244.76) and (359.68,206.78) .. (359.68,159.93) -- cycle ;
\draw [line width=1.5]    (397.54,87.5) .. controls (386.97,112.61) and (412.09,199.24) .. (444.33,189) .. controls (476.57,178.76) and (475,133) .. (453,133) .. controls (431,133) and (402,160) .. (412,192) .. controls (422,224) and (446,235) .. (475.73,239.5) ;
\draw  [color={rgb, 255:red, 0; green, 0; blue, 0 }  ,draw opacity=1 ][fill={rgb, 255:red, 0; green, 0; blue, 0 }  ,fill opacity=1 ][line width=1.5]  (393.91,88.19) .. controls (393.5,86.18) and (394.8,84.25) .. (396.81,83.88) .. controls (398.82,83.5) and (400.78,84.82) .. (401.18,86.82) .. controls (401.58,88.82) and (400.28,90.75) .. (398.28,91.13) .. controls (396.27,91.51) and (394.31,90.19) .. (393.91,88.19) -- cycle ;
\draw  [color={rgb, 255:red, 0; green, 0; blue, 0 }  ,draw opacity=1 ][fill={rgb, 255:red, 0; green, 0; blue, 0 }  ,fill opacity=1 ][line width=1.5]  (472.09,240.19) .. controls (471.69,238.18) and (472.99,236.25) .. (474.99,235.88) .. controls (477,235.5) and (478.96,236.82) .. (479.36,238.82) .. controls (479.77,240.82) and (478.47,242.75) .. (476.46,243.13) .. controls (474.45,243.51) and (472.49,242.19) .. (472.09,240.19) -- cycle ;
\draw  [fill={rgb, 255:red, 128; green, 128; blue, 128 }  ,fill opacity=1 ][line width=2.25]  (434.68,163.86) .. controls (434.27,161.86) and (435.57,159.93) .. (437.58,159.55) .. controls (439.59,159.17) and (441.55,160.49) .. (441.95,162.49) .. controls (442.35,164.5) and (441.05,166.43) .. (439.05,166.8) .. controls (437.04,167.18) and (435.08,165.86) .. (434.68,163.86) -- cycle ;

\draw (128.31,160.18) node [anchor=north west][inner sep=0.75pt]  [font=\normalsize] [align=left] {\Large $\displaystyle 0$};
\draw (131.51,247.5) node [anchor=north west][inner sep=0.75pt]   [align=left] {\Large $\displaystyle q_{1}$};
\draw (178,65) node [anchor=north west][inner sep=0.75pt]   [align=left] {\Large $\displaystyle q_{0}$};
\draw (440.31,166.18) node [anchor=north west][inner sep=0.75pt]  [font=\normalsize] [align=left] {\Large $\displaystyle 0$};
\draw (374,65) node [anchor=north west][inner sep=0.75pt]   [align=left] {\Large $\displaystyle q_{0}$};
\draw (477.73,247.5) node [anchor=north west][inner sep=0.75pt]   [align=left] {\Large $\displaystyle q_{1}$};
\draw (81,120) node [anchor=north west][inner sep=0.75pt]   [align=left] {\Large $\displaystyle \gamma $};
\draw (460,113) node [anchor=north west][inner sep=0.75pt]   [align=left] {\Large $\displaystyle \psi $};
\draw (217,134) node [anchor=north west][inner sep=0.75pt]   [align=left] {\Large $\displaystyle \partial B_{\rho }$};
\draw (534,134) node [anchor=north west][inner sep=0.75pt]   [align=left] {\Large $\displaystyle \partial B_{\rho }$};

\end{tikzpicture}}
	\caption{Definition \ref{def:winding_number_path}: the path $\gamma$ has winding number 1 with respect to the origin, hence $\gamma\in\Gamma_{q_0,q_1}(1)$; on the right, $\psi\in\Gamma_{q_0,q_1}(2)$.}\label{fig:winding}
\end{figure}
For $k\in\N$, we define the subset
\[
\Gamma_{q_0,q_1}(k)=\left\lbrace \gamma\in\Gamma_{q_0,q_1}:\,\gamma\ \text{has winding number}\ k\ \text{with respect to the origin}\right\rbrace,
\]
and we minimise the Maupertuis functional $\mathcal{M}_h$ within $\Gamma_{q_0,q_1}(k)$. Using an argument entirely analogous to that of Theorem \ref{thm:collisionless_fixed_end}, one proves that minimisers of $\mathcal{M}_h$ in $\Gamma_{q_0,q_1}(k)$ exist for any $k\in\N$ and $h$ satisfying \eqref{hyp:energy}, and that they are collision-less. Moreover, the following result holds.

\begin{lemma}\label{lem:bounded_euclidean_length}
    For fixed $k\in\N$ and $h$ satisfying \eqref{hyp:energy}, the Euclidean length of any minimiser in $\Gamma_{q_0,q_1}(k)$ is uniformly bounded from above, independently on the end-points $q_0,q_1\in\partial B_r$.    
\end{lemma}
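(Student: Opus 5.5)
The plan is to compare the minimiser with an explicit competitor of controlled Maupertuis value, and then turn the resulting bound on $\mathcal{M}_h$ into a bound on Euclidean length. The conversion uses the Cauchy--Schwarz trick already exploited in Step 2 of Theorem \ref{thm:collisionless_fixed_end}. Throughout, write $m_0=\min_{\bar B_r} f_1>0$, $M_0=\max_{\bar B_r} f_1$ and $w_0=\inf_{\bar B_r}(W_1+h)$. By \eqref{hyp:energy}, $h+V>0$ everywhere. If needed, shrink $r$ so that $W_1+h$ is bounded below on $\bar B_r$ and $f_1(x)+|x|^2(W_1(x)+h)\ge m_0/2$ there.

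\textbf{Step 1: uniform upper bound on the minimum value.} For any $q_0,q_1\in\partial B_r$, I build a competitor $\sigma\in\Gamma_{q_0,q_1}(k)$ that stays on the circle $\partial B_r$. It goes from $q_0$ to $q_1$ along an arc of angular excursion $\Delta\theta\in[0,2\pi(k+1)]$, chosen so that the winding number in the sense of Definition \ref{def:winding_number_path} equals $k$. Parametrise it with constant speed $r\Delta\theta$ on $[0,1]$. Then
\[
\mathcal{M}_h(\sigma)\le \tfrac12 r^2\Delta\theta^2\Big(\tfrac{M_0}{r^2}+\sup_{\bar B_r}|W_1|+h\Big)\le C_k,
\]
where $C_k$ does not depend on $q_0,q_1$. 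Hence every minimiser $\gamma$ satisfies $\mathcal{M}_h(\gamma)\le C_k$.

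\textbf{Step 2: from the Maupertuis value to length.} Let $\gamma$ be a collision-less minimiser, so that $|\gamma|>0$ on $[0,1]$. By Cauchy--Schwarz,
\[
\Big(\int_0^1\frac{|\dot\gamma|}{|\gamma|}\Big)^2\le\int_0^1|\dot\gamma|^2\int_0^1\frac{1}{|\gamma|^2}\le \frac{2}{m_0}\int_0^1|\dot\gamma|^2\int_0^1\frac{f_1(\gamma)}{|\gamma|^2}.
\]
Since $|\gamma|\le r$, the integrand $f_1/|\gamma|^2+W_1+h$ is at least $c\,|\gamma|^{-2}$ for some $c>0$, by the shrinking of $r$ above. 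Therefore
\[
\Big(\int_0^1 \frac{|\dot\gamma|}{|\gamma|}\Big)^2\le C\,\mathcal{M}_h(\gamma)\le C\,C_k.
\]
Finally, $|\gamma|\le r$ gives
\[
\int_0^1|\dot\gamma|\le r\int_0^1\frac{|\dot\gamma|}{|\gamma|}\le r\sqrt{C\,C_k},
\]
which is a bound independent of the endpoints.

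\textbf{Main obstacle.} The delicate point is the lower bound $h+V\ge c|x|^{-2}$ on all of $\bar B_r$, not only near the origin. If $W_1+h$ is negative somewhere, the bound fails on that set. I will fix this first by the shrinking of $r$ described above, or alternatively by using only $h+V>0$ together with continuity on the compact annulus where $|x|$ is bounded away from $0$. The second difficulty is checking that the competitor arc really lies in $\Gamma_{q_0,q_1}(k)$ for the paper's winding convention. Any bounded angular excursion suffices for that, so this step is routine bookkeeping.
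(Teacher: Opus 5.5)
Your proof is correct, but it is routed differently from the paper's.

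The paper passes to the Jacobi length. It uses that a collision-less minimiser of $\mathcal{M}_h$ also minimises $\mathcal{L}_h$, and the circular competitor gives $\mathcal{L}_h(\gamma)\le\mathcal{L}_h(\psi)\le C(k)$. It then needs only the trivial bound $h+V\ge h+\inf V>0$ from \eqref{hyp:energy}, which gives $\mathcal{E}(\gamma)\le\mathcal{L}_h(\gamma)/\sqrt{h+\inf V}$.

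You stay with $\mathcal{M}_h$, so you need no equivalence between minimisers of $\mathcal{M}_h$ and $\mathcal{L}_h$. You then convert the action bound into a bound on the logarithmic length $\int_0^1|\dot\gamma|/|\gamma|$, using the scale-invariant bound $h+V\ge c|x|^{-2}$. For this lemma alone that detour is unnecessary. Since $\int_0^1(h+V(\gamma))\ge h+\inf V$, Cauchy--Schwarz already gives $\mathcal{E}(\gamma)^2\le\int_0^1|\dot\gamma|^2\le 2C_k/(h+\inf V)$.

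What your route buys is extra information. Since $|\gamma(0)|=|\gamma(1)|=r$, you also get $2\log(r/\min|\gamma|)\le\int_0^1|\dot\gamma|/|\gamma|\le\sqrt{2C_k/c}$. This is the uniform distance from the origin of Lemma \ref{lem:bound_collisions}, obtained without the polar-coordinate computation the paper uses there.

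Your ``main obstacle'' is not a real one, and shrinking $r$ is not admissible anyway, since $r$ fixes the class $\Gamma_{q_0,q_1}(k)$ in the statement. Your second alternative works on all of $\bar B_r$:
\begin{itemize}
\item for $|x|\le\delta$ with $\delta$ small, boundedness of $W_1$ gives $h+V\ge m_0/(2|x|^2)$;
\item for $\delta\le|x|\le r$, one has $h+V\ge h+\inf V\ge (h+\inf V)\,\delta^2/|x|^2$.
\end{itemize}
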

\begin{proof}
    Fix $k\in\N$ and $h$ satisfying \eqref{hyp:energy}, and let $\gamma\in\Gamma_{q_0,q_1}(k)$ be a collision-less minimiser of $\mathcal{M}_h$. Since $\gamma$ is collision-less, it also minimises the Jacobi-Maupertuis length
    \[
    \mathcal{L}_h(\gamma)=\int_0^1\lvert\dot\gamma(t)\rvert\sqrt{V(\gamma(t))+h}\,dt.
    \]
    Thanks to \eqref{hyp:energy}, we have that there exists $C>0$ such that $\sqrt{V(\gamma(t))+h}\ge C$ for all $t\in[0,1]$. The Euclidean length of $\gamma$ is $\mathcal{E}(\gamma)=\int_0^1\lvert\dot\gamma(t)\rvert\,dt$ and hence $\mathcal{L}_h(\gamma)\ge C\mathcal{E}(\gamma)$. 
    
    It is also straightforward to give a uniform bound on $\mathcal{L}_h(\gamma)$ for $\gamma\in \Gamma_{q_0,q_1}(k)$. Let $\psi$ be the $H^1$ path obtained by travelling with constant speed along $\partial B_r$ in the counterclockwise direction from $q_0$ to $q_1$, and then continuing along $\partial B_r$ for $k-1$ further turns so that $\psi$ has winding number $k$ with respect to the origin (see Definition \ref{def:winding_number_path}). In this way, $\psi\in\Gamma_{q_0,q_1}(k)$ and the quantity $\mathcal{L}_h(\psi)$ can be bounded from above by the Jacobi-Maupertuis length of the circular loop winding $k$ times on $\partial B_r$, indipendently on $q_0,q_1$. Moreover, since $\gamma$ is a minimiser, we have that $\mathcal{L}_h(\psi)\ge\mathcal{L}_h(\gamma)$ and the proof is completed.
    
\end{proof}

We now prove that for fixed $k\in\N$ and $h$ satisfying \eqref{hyp:energy}, the minimisers in $\Gamma_{q_0,q_1}(k)$ remain at a uniformly positive distance from the origin, independently on the end-points $q_0,q_1$.

\begin{lemma}\label{lem:bound_collisions}
    Let $h$ satisfy \eqref{hyp:energy}, $k\in\N$ and $q_0,q_1\in \partial B_r$. If $\gamma$ is a minimiser of $\mathcal{M}_h$ in $\Gamma_{q_0,q_1}(k)$, there exists a constant $C>0$, depending only on $r$ and $k$, such that
    \[
    \min_{t\in[0,1]}\lvert\gamma(t)\rvert\ge C.
    \]     
\end{lemma}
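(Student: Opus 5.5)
We argue by contradiction and compactness, combining the uniform bound of Lemma \ref{lem:bounded_euclidean_length} with a comparison (cut-and-replace) argument near the origin. Suppose there are endpoints $q_0^n,q_1^n\in\partial B_r$ and minimisers $\gamma_n\in\Gamma_{q_0^n,q_1^n}(k)$ with $\rho_n:=\min_t|\gamma_n(t)|\to0$. By Lemma \ref{lem:bounded_euclidean_length} and the minimality with respect to $\psi$, the Jacobi lengths $\mathcal{L}_h(\gamma_n)$ are bounded by a constant $L_k$. This constant depends only on $k$, $r$ and $h$; it is essentially $2\pi k r\sqrt{\max_{\partial B_r}(V+h)}$ plus one extra turn.

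\textbf{Step 1: logarithmic length cost.} Pass to polar-logarithmic coordinates $\gamma=e^{s+i\vartheta}$, where $s=\log|\gamma|$. Near the origin $V+h\ge f_1/|x|^2-C\ge c_0/|x|^2$ with $c_0=\frac12\min f_1>0$. Hence on any arc contained in $B_{r'}$, for $r'$ small,
\[
\mathcal{L}_h(\gamma)\ge\sqrt{c_0}\int\frac{|\dot\gamma|}{|\gamma|}=\sqrt{c_0}\int\sqrt{\dot s^2+\dot\vartheta^2}.
\]
In these coordinates the metric is comparable to the flat cylinder metric. Any arc of $\gamma_n$ that goes from $\partial B_{r'}$ down to radius $\rho_n$ therefore costs at least $\sqrt{c_0}\log(r'/\rho_n)$. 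Since $\gamma_n$ starts and ends on $\partial B_r$, it must descend and return, so $\mathcal{L}_h(\gamma_n)\ge 2\sqrt{c_0}\log(r'/\rho_n)\to\infty$. This contradicts the bound $L_k$.

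\textbf{Step 2: the subtle point.} The estimate in Step 1 looks too cheap, because the remark after Theorem \ref{thm:collisionless_tau} shows that small circles around $c_1$ have bounded Maupertuis value. The issue is that those loops do not reach $\partial B_r$. In our setting the endpoints are pinned on $\partial B_r$, so every competitor must travel radially from $\log r$ down to $\log\rho_n$ and back. In cylinder coordinates this radial travel is forced and its cost diverges. The winding constraint only adds angular length and cannot shorten the radial excursion.

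\textbf{Step 3: explicit constant and uniformity.} To get $C$ explicitly, set $\log(r'/C)=L_k/(2\sqrt{c_0})$. The only care needed is the comparison $V+h\ge c_0|x|^2{}^{-1}$ on $B_{r'}$, which uses \eqref{eq:potential} (positivity and continuity of $f_1$, boundedness of $W_1$) together with \eqref{hyp:energy}. Since this comparison is independent of the endpoints, $C$ depends only on $r$, $k$ (and $h$, $V$), as claimed. No compactness of minimisers is needed; the contradiction, or directly the inequality $2\sqrt{c_0}\log(r'/\min|\gamma|)\le L_k$, gives the bound.

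I expect the main obstacle to be the rigorous handling of the regime where $\gamma$ also winds $k$ times. One must check that the upper bound $L_k$ really controls the radial excursion, but this follows from the pointwise inequality $\sqrt{\dot s^2+\dot\vartheta^2}\ge|\dot s|$. The argument is therefore routine once set up in these coordinates.
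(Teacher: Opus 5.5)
Your proposal is correct and is essentially the paper's argument: an upper bound on $\mathcal{L}_h(\gamma)$ from the circular competitor on $\partial B_r$ with winding number $k$, played against the logarithmic lower bound $\int|\dot\gamma|/|\gamma|\ge 2\log(r/\min|\gamma|)$ that the $f_1/|x|^2$ singularity imposes on the forced radial descent and return. The only difference is minor: you work in a smaller ball $B_{r'}$ where $V+h\ge c_0/|x|^2$, which absorbs $W_1+h$. Because of this you do not need the Euclidean-length bound of Lemma~\ref{lem:bounded_euclidean_length}, which the paper uses to control the $-C_{W_1}\mathcal{E}(\gamma_n)$ term, and you obtain the explicit constant $r'e^{-L_k/(2\sqrt{c_0})}$ directly rather than by contradiction.
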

\begin{proof}
    Fix $h$ satisfying \eqref{hyp:energy} and $k\in\N$. For $q\in\R^2$, we introduce polar coordinates $q=\rho e^{i\vt}=(\rho,\vt)$ so that the Euclidean metric reads $g_e = dq^2=d\rho^2+\rho^2d\vt^2$. In these coordinates, the Jacobi-Maupertuis metric associated with $V$ and $h$ reads 
    \begin{equation}\label{eq:jm_polar}
    \begin{aligned}
    g_h = (h+V(q))dq^2 &= \left(\frac{f_1(\rho,\vt)}{\rho^2}+W_1(\rho,\vt)+h\right)\left(d\rho^2+\rho^2d\vt^2\right) \\
     & = \frac{f_1(\rho,\vt)}{\rho^2}d\rho^2 + f_1(\rho,\vt)d\vt^2 + \left(W_1(\rho,\vt)+h\right)\left(d\rho^2+\rho^2d\vt^2\right).
    \end{aligned}
    \end{equation}
    Now, consider a sequence of end-points $q_0^n, q_1^n\in\partial B_r$, and a sequence of minimisers $\gamma_n\in\Gamma_{q_0^n,q_1^n}(k)$ of the Maupertuis functional $\mathcal{M}_h$. Since $\gamma_n$ is collision-less for any $n$, we have that 
    \[
        a_n = \min_{t\in[0,1]}\lvert\gamma_n(t)\rvert >0,\quad \text{for any}\ n\in\N. 
    \]
    Assume by contradiction that $a_n\to 0^+$. Let $\eta_n$ be the path in $\Gamma_{q_0^n,q_1^n}(k)$ obtained by travelling along $\partial B_r$ in the counterclockwise direction from $q_0^n$ to $q_1^n$ and then continuing along $\partial B_r$ for $k-1$ further turns. We can write $\eta_n(t)=r e^{i\vp_n(t)}$ and we assume that $\dot{\vp}_n\ge 0$, so that $\int_0^1\dot{\vp}_n(t)\,dt=\vp_n(1)-\vp_n(0)\le 2k\pi$. Since $\lvert\eta_n\rvert\equiv r$, from \eqref{eq:jm_polar} we have:
    \[
    g_h(\dot{\eta}_n(t), \dot{\eta}_n(t)) = f_1(r, \vp_n(t))\dot{\vp}_n^2(t) + \left(W_1(r,\vp_n(t))+h\right)r^2\dot{\vp}_n^2(t),
    \]
    and thus, the Jacobi-length functional reads
    \begin{equation}\label{eq:bound_eta}
    \begin{aligned}
    \mathcal{L}_h(\eta_n)&=\int_0^1g_h(\dot{\eta}_n(t), \dot{\eta}_n(t))^{1/2}\,dt \\
                       &\le\int_0^1f_1(r,\vp_n(t))^{1/2}\dot{\vp}_n(t)\,dt+\int_0^1\left(W_1(r,\vp_n(t))+ h\right)^{1/2}r\dot{\vp}_n(t)\,dt \\
                       &\le 2k\pi(C_{f_1} + r C_{W_1}),
    \end{aligned}
    \end{equation}
    where $C_{f_1} = \max_{\bar{B}_r} \sqrt{f_1}>0$ and $C_{W_1}=\max_{\bar{B}_r}\lvert W_1+h\rvert^{1/2}$. 

    At this point, concerning the sequence of minimisers $(\gamma_n)$, we can write $\gamma_n(t)=\zeta_n(t)e^{i\psi_n(t)}$, for some smooth functions $\zeta_n$ and $\psi_n$. Moreover, given $a,b\ge 0$, we have $2\sqrt{a+b}\ge \sqrt{a}-\sqrt{b}$ and $2\sqrt{a+b}\ge\sqrt{a}+\sqrt{b}$. Therefore, we can write:
    \[
    \begin{aligned}
    \mathcal{L}_h(\gamma_n)&=\int_0^1\left(f_1(\zeta_n,\psi_n)\frac{\dot{\zeta}_n^2}{\zeta_n^2}+f_1(\zeta_n,\psi_n)\dot{\psi}_n^2+\left(W_1(\gamma_n)+h\right)\lvert\dot{\gamma}_n\rvert^2\right)^{1/2}\,dt \\
    &\ge \frac12\int_0^1\left(f_1(\zeta_n,\psi_n)\frac{\dot{\zeta}_n^2}{\zeta_n^2}+f_1(\zeta_n,\psi_n)\dot{\psi}_n^2\right)^{1/2}\,dt-C_{W_1}\mathcal{E}(\gamma_n) \\
    &\ge \frac14\int_0^1\sqrt{f_1(\zeta_n,\psi_n)}\frac{\lvert\dot{\zeta}_n\rvert}{\zeta_n}\,dt+\frac14\int_0^1\sqrt{f_1(\zeta_n,\psi_n)}\lvert\dot{\psi}_n\rvert\,dt- C_{W_1}\mathcal{E}(\gamma_n).
    \end{aligned}
    \]
    Now, we recall that $a_n=\min\limits_{t\in[0,1]}\zeta_n(t)>0$.
    We have
    \[
    \int_0^1\frac{\lvert\dot{\zeta}_n(t)\rvert}{\zeta_n(t)}\,dt = \int_{\{t: \dot{\zeta}_n\ge0\}}\frac{\dot{\zeta}_n(t)}{\zeta_n(t)}\,dt- \int_{\{t: \dot{\zeta}_n\le0\}} \frac{\dot{\zeta}_n(t)}{\zeta_n(t)}\,dt\ge2\log(r/a_n).
    \]
    Moreover, we see that 
    \[
    \int_0^1\lvert\dot{\psi}_n(t)\rvert\,dt\ge\lvert\psi_n(1)-\psi_n(0)\rvert\ge 2(k-1)\pi.
    \]
    We then obtain
    \[
        \mathcal{L}_h(\gamma_n)\ge C_1\log(r/a_n)+C_2-\mathcal{E}(\gamma_n),
    \]
    with $C_1>0$ and $C_2\ge 0$ depending only on $k$ and $r$, since $f_1$ is bounded from below in $\bar{B}_r$. Recalling that $\mathcal{E}(\gamma_n)$ is uniformly bounded with respect to the end-points $q_0^n,q_1^n$ (see Lemma \ref{lem:bounded_euclidean_length}) and that $\mathcal{L}_h(\eta_n)\ge\mathcal{L}_h(\gamma_n)$ for any $n\in\N$, we can combine the previous inequality with \eqref{eq:bound_eta} to obtain
    \[
        C_3\ge C_1\log(r/a_n)+C_2,
    \]
    with $C_3>0$ depending only on $k$ and $\rho$. At this point, recalling that $a_n\to 0^+$, we can pass to the limit as $n\to+\infty$ to obtain a contradiction and the lemma is proved. 

\end{proof}

Finally, we deduce the global compactness property for all minimisers of the Maupertuis functional. 
\begin{lemma}\label{lem:compactness_minimisers}
    Let $h$ satisfy \eqref{hyp:energy} and $k\in\N$. There exists a constant $C>0$, depending only on $h$ and $k$, such that:
    
    \begin{itemize}
        \item \emph{(Periodic minimisers)} For any finite $k$-admissible sequence $s_1,\ldots,s_m\in\{\pm1,\pm2\}$, every minimiser $\gamma$ of $\mathcal{M}_h$ in $\mathcal{H}(s_1,\ldots,s_m)$ satisfies
            \[
                \min_{t\in[0,1]}\lvert\gamma(t)-c_i\rvert\ge C,\quad i=1,2. 
            \] 
        \item \emph{(Fixed-end minimisers)} For each pair of endpoints $q^\pm\in\R^2\setminus\{\bar{B}_{c_1}(C)\cup\bar{B}_{c_2}(C)\}$ and any finite $k$-admissible sequence $s_1,\ldots,s_m\in\{\pm1,\pm2\}$, every minimiser of $\mathcal{M}_h$ in $\mathcal{H}_{q^\pm}(s_1,\ldots,s_m)$ satisfies 
        \[
                \min_{t\in[0,1]}\lvert\gamma(t)-c_i\rvert\ge C,\quad i=1,2. 
        \]
        
        In other words, every minimiser remains at a bounded distance from the centres, depending only on $k$ and $h$. 
    \end{itemize}
\end{lemma}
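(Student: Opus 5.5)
The plan is to reduce Lemma \ref{lem:compactness_minimisers} to the local statement of Lemma \ref{lem:bound_collisions}. The key tool is Proposition \ref{prop:local_minimisers}: sub-arcs of a global minimiser are fixed-end minimisers in the induced topological class.

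Fix $k$, $h$, and let $\gamma$ be a minimiser in $\mathcal{H}(s_1,\ldots,s_m)$; by Corollary \ref{cor:collisionless} it is collision-free. Treat $c_1$; the case of $c_2$ is symmetric. Let $r$ be as in \eqref{eq:potential}. If $\gamma$ never enters $B_r(c_1)$, the bound $|\gamma-c_1|\ge r$ holds trivially. Otherwise, decompose $\gamma^{-1}(B_r(c_1))$ into its connected components. Each component is an interval $(a,b)$ with $\gamma(a),\gamma(b)\in\partial B_r(c_1)$. The whole loop cannot lie inside $B_r(c_1)$, because its class involves both generators, by condition (3) of Definition \ref{def:admissible_k}.

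For each such excursion, restrict to $[a,b]$. Since $\gamma|_{[a,b]}$ lies in $\bar B_r(c_1)$, it has a well defined winding number $w$ in the sense of Definition \ref{def:winding_number_path}.

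\emph{Step 1 (bound on $w$).} I will show $|w|\le k+1$, or some comparable bound depending only on $k$. Since $\gamma$ is taut (Proposition \ref{prop:taut_minimisers}), the winding of a sub-arc around $c_1$ inside $B_r(c_1)$ should correspond to a block of consecutive equal symbols $\pm1$ in the cyclically reduced word $\gamma_{s_1,\ldots,s_m}$. Condition (1) of $k$-admissibility bounds such blocks by $k$, and condition (2) rules out cancellations. If the excursion wound more than $k+1$ times, one could argue via the absence of singular 1-gons and 2-gons that the word contains $\alpha_1^{\pm(k+1)}$ as a subword, which contradicts admissibility. This combinatorial translation between geometric winding of a taut minimiser and the word is the main obstacle. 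What I would try first is an intersection count: compare the number of self-intersections of the excursion, which is at least $|w|-1$ for a path winding $|w|$ times, with the minimal self-intersection number of the class. Then use Theorem \ref{thm:taut} to see that extra loops around $c_1$ would have to be reflected in the reduced word.

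\emph{Step 2 (local minimality).} By Proposition \ref{prop:local_minimisers}, $\gamma|_{[a,b]}$ minimises $\mathcal{M}_h$ among paths with the same ends whose concatenation with the rest of $\gamma$ stays in the class. Among competitors inside $\bar B_r(c_1)$, this class is exactly $\Gamma_{\gamma(a),\gamma(b)}(w)$, after reversing orientation if $w<0$. Therefore $\gamma|_{[a,b]}$ is a minimiser there. Lemma \ref{lem:bound_collisions} then gives $\min|\gamma-c_1|\ge C(r,|w|)$. Since $|w|$ is bounded in terms of $k$, we get a constant $C_k=\min_{|w|\le k+1}C(r,|w|)$, independent of the endpoints and of the sequence. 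The case $w=0$ also needs a bound, and Lemma \ref{lem:bound_collisions} covers it with $k=0$ by the same proof.

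\emph{Step 3 (fixed ends).} For fixed-end minimisers, the argument is identical. Choose $C\le r$ so that $q^\pm\notin \bar B_C(c_i)$. Arcs that do not leave $B_r(c_i)$ because an endpoint lies inside $B_r(c_i)\setminus \bar B_C(c_i)$ need a slight variant of Lemma \ref{lem:bound_collisions}, with one endpoint on a smaller circle. Its proof goes through unchanged, since the comparison path $\eta_n$ can be taken as a radial segment followed by circular arcs. Take $C$ to be the minimum of all constants obtained, which depends only on $h$ and $k$ (through $r$ and the bounds on $f_i$, $W_i$).
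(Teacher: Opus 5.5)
Your plan is the paper's proof: restrict $\gamma$ to an excursion into a small ball around $c_i$, use Proposition~\ref{prop:local_minimisers} to see that this arc minimises in $\Gamma_{q_0,q_1}(w)$, bound $w$ in terms of $k$ via tautness and admissibility, and conclude with Lemma~\ref{lem:bound_collisions}. For Step~1 the paper gives exactly your one-line justification: more winding would force a singular 1-gon or 2-gon, contradicting Proposition~\ref{prop:taut_minimisers}. So Steps~1--2 agree with the paper.

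Do not, however, rely on the intersection count you say you would try first. Taken literally, tautness only yields $|w|-1\le\iota$, where $\iota$ is the minimal self-intersection number of $[\gamma_{s_1,\ldots,s_m}]$. But $\iota$ is unbounded over $k$-admissible words, already for $k=1$ (e.g.\ the words $(\alpha_1\alpha_2^{-1})^n$). The constant from Lemma~\ref{lem:bound_collisions} would then depend on the word and not only on $k$, which destroys exactly the uniformity the lemma exists to provide. The bound must be local. Winding of a single excursion that does not appear as an $\alpha_1$-block of length at most $k$ in the reduced word must be cancelled by opposite winding elsewhere. What has to be shown is that this cancellation produces a singular 1-gon or 2-gon.

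Your Step~3 goes beyond the paper, which just says the fixed-end case is identical and never treats endpoints inside the ball. However, the way you close it is circular. Consider your variant of Lemma~\ref{lem:bound_collisions} with an endpoint at distance $d<r$ from $c_i$. The radial part of the comparison path costs about $\sqrt{\max f_i}\,\log(r/d)$. The estimates of that proof then only bound $\min|\gamma-c_i|$ below by a constant times $d^{p}$ for some $p>1$. In any case no such bound can exceed $d$, since the arc starts there.

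So this constant depends on $C$ through the restriction $d>C$, and it is smaller than $C$. ``Taking $C$ to be the minimum of all constants obtained'' lets endpoints come closer, which lowers the constant again, so there is no consistent choice. What your argument does give is a decoupled statement: for each $\delta>0$, endpoints outside $\bar B_\delta(c_1)\cup\bar B_\delta(c_2)$ yield $\min|\gamma-c_i|\ge C(\delta,h,k)$, by compactness of the range $d\in[\delta,r]$. This is what the later applications actually need, since there the endpoints are far from the centres or the arcs are restrictions of periodic minimisers. A single constant playing both roles would need a different fact, namely that such an arc never goes below its starting distance. Neither your sketch nor the paper's proof provides it.
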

\begin{proof}
    The proof is exactly the same for periodic and fixed-end minimisers. Recalling the definition of the potential $V$ given in \eqref{eq:potential}, we consider a quantity $\rho< r$. Define $\mathcal{B}_1, \mathcal{B}_2$ as the two balls with radius $\rho$, centred respectively at $c_1$ and $c_2$. Fix $h$ satisfying \eqref{hyp:energy}, $k\in\N$, a finite $k$-admissible sequence $s_1\ldots,s_m\in\{\pm1,\pm2\}$ and consider a Maupertuis minimiser $\gamma\in\mathcal{H}(s_1,\ldots,s_m)$. If $\gamma$ does not intersect any of the balls $\mathcal{B}_1$ and $\mathcal{B}_2$ there's nothing to prove. Assume instead that $\gamma$ intersects the boundary of $\mathcal{B}_1$ (the proof for $\mathcal{B}_2$ is completely analogous). Then, by construction, it has to intersect the boundary of $\mathcal{B}_1$ at least once again. Without loss of generality, we can assume that there exists an interval $[a,b]\subset\cerchio$ such that $\gamma((a,b))\subset\mathring{\mathcal{B}}_1$ and we define $q_0=\gamma(a)$ and $q_1=\gamma(b)$. From Proposition \ref{prop:local_minimisers}, $\gamma|_{[a,b]}$ is a minimiser of the Maupertuis functional in the space of $H^1$ paths with a given number of turns around $c_1$ fixed by the sequence $s_1,\ldots,s_m$. Note that $\gamma|_{[a,b]}$ winds around $c_1$ at most $k$ times; if that were not the case, that it would draw at least a singular 1-gon or 2-gon, violating Proposition \ref{prop:taut_minimisers}. Therefore, recalling the notations of Definition \ref{def:winding_number_path} and Lemma \ref{lem:bound_collisions}, $\gamma|_{[a,b]}\in\Gamma_{x_0,x_1}(p)$, for some $p\le k$. By Lemma \ref{lem:bound_collisions}, $\gamma|_{[a,b]}$ keeps a bounded distance from $c_1$, which depends only on $p$ and $\rho$. 

\end{proof}

\section{Symbolic coding of trajectories}\label{sec:coding}

In this section we are going to prove Theorem \ref{thm:coding}. In particular, we need to show that whenever we fix an admissible bi-infinite sequence $(s_i)_{i\in\Z}\in\{\pm1,\pm2\}^\Z$ in the spirit of Definition \ref{def:admissible_k}, there exists a solution $x\colon \R\to\R^2\setminus\{c_1,c_2\}$ of the motion and energy equations \eqref{eq:motion}-\eqref{eq:energy} such that $x$ has bi-infinite itinerary $(s_i)_{i\in\Z}$. Recall that we can associate to suitable trajectories a symbolic itinerary recording the successive intersections with two distinguished oriented curves $\Gamma_1,\Gamma_2\subset\mathcal{D}$ (cf. Definition \ref{def:itinerary}), where $\mathcal{D}\subset\R^2$ is the compact region on which all periodic solutions are confined (see Lemma \ref{lem:boundedness_minimisers}-\ref{lem:compactness_minimisers}). In this way, we have a natural way to associate symbolic sequences to trajectories: the itineraries. 

In the first part of this section we then construct the curves $\Gamma_1, \Gamma_2$. If we choose suitable solutions of \eqref{eq:motion}, the transversality conditions in Definition \ref{def:itinerary} are automatically verified (compare with Proposition \ref{prop:intersection_minimisers}). We then prove Theorem \ref{thm:coding}: every admissible symbolic sequence is realised by a solution of \eqref{eq:motion}-\eqref{eq:energy} defined on the whole $\R$. The proof follows the classical scheme of symbolic dynamics presented in \cite{BarCan}. Given a finite admissible word, we associate with it a corresponding periodic minimiser of the Maupertuis functional. The compactness properties established in Section \ref{sec:compactness} allow us to pass to the limit along sequences of periodic minimisers with increasing periods, thereby obtaining global trajectories realising prescribed bi-infinite itineraries.

\subsection{Collision minimisers}

The curves $\Gamma_1$ and $\Gamma_2$ will be constructed as trajectories asymptotic to collisions with the centres $\mathcal{D}$, which locally minimise the Maupertuis functional. Their geometric properties ensure that every transversal intersection of a trajectory with $\Gamma_1\cup\Gamma_2$ determines a symbol in the alphabet $\{\pm1,\pm2\}$, according to both the curve being crossed and the orientation of the crossing. We start with the following definition. 

\begin{definition}
    Let $h$ satisfy \eqref{hyp:energy}, let $q\in\R^2\setminus\{c_1,c_2\}$ and let $c\in\{c_1,c_2\}$. A curve $\gamma\colon [0, +\infty)\to\R^2\setminus\{c_1,c_2\}$ satisfying the energy equation \eqref{eq:energy} is called a \emph{collision minimiser} if 
    \begin{itemize}
        \item $\gamma(0)=q$ and $\lim\limits_{t\to+\infty}\gamma(t)=c$;
        \item for every compact interval $[a,b]\subset [0,+\infty)$, the restriction $\gamma|_{[a,b]}$ is a collision-less minimiser of $\mathcal{M}_h$ among all $H^1$ paths joining $\gamma(a)$ and $\gamma(b)$, and homotopic to the segment $[\gamma(a), \gamma(b)]$. 
    \end{itemize}
\end{definition}

So, we will consider collision minimisers with a specific homotopy constraint: we require that, if $\gamma$ connects $q$ and $c$, then it is homotopic to the segment joining $q$ to $c$. This is well defined unless the other center belongs to the segment $[q,c]$; in this case we can slightly perturb the segment to obtain a non collisional curve and consider the relative homotopy class. Now we detect a qualitative property of collision minimisers which will be useful in the proof of their existence. 
\begin{proposition}\label{prop:eight}
    Let $h$ satisfy \eqref{hyp:energy} and let $\gamma_{eight}$ be the eight-shaped minimiser of $\mathcal{M}_h$ in the space $\hat{\mathcal{H}}(1, -2)$. Let $U_{eight}$ be the unbounded connected component of $\R^2\setminus\gamma_{eight}$ and let $q\in U_{eight}$. Then, every collision minimiser at energy $h$ joining $q$ to one of the centres and satisfying the homotopy constraint above intersects $\gamma_{eight}$ at most once. 

    More generally, the same property holds for every fixed-end minimiser of $\mathcal{M}_h$ joining $q_1\in U_{eight}$ to a point $q_2$ lying in one of the bounded connected components of $\R^2\setminus\gamma_{eight}$.
\end{proposition}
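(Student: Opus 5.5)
We argue by contradiction, using the exchange (cut-and-paste) argument behind Propositions \ref{prop:taut_minimisers} and \ref{prop:intersection_minimisers}: two minimisers cannot bound a singular $2$-gon. First we record the structure of $\gamma_{eight}$. By Proposition \ref{prop:taut_minimisers} it is taut in $[\al_1\al_2^{-1}]$, so it has exactly one transverse self-intersection $p$. Hence $\R^2\setminus\gamma_{eight}$ has exactly three components: $U_{eight}$ and two bounded lobes $\Omega_1\ni c_1$, $\Omega_2\ni c_2$. Their boundaries are the two simple sub-loops of $\gamma_{eight}$ based at $p$. Each such sub-loop, restricted between any two of its points, is a fixed-end minimiser in its relative homotopy class (Proposition \ref{prop:local_minimisers}).

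Next let $\gamma$ be a collision minimiser from $q\in U_{eight}$ to $c_1$; the case $c_2$ is symmetric. Since $c_1\in\Omega_1$, the curve $\gamma$ must cross $\gamma_{eight}$ at least once. The intersections are isolated and transverse because both curves solve \eqref{eq:motion} at energy $h$ and are distinct. Suppose $\gamma$ meets $\gamma_{eight}$ at two parameters $t_1<t_2$, and choose consecutive ones. The sub-arc $\gamma|_{[t_1,t_2]}$ and one of the two arcs $\sigma$ of $\gamma_{eight}$ between $\gamma(t_1)$ and $\gamma(t_2)$ then form a closed curve. We aim to show that this curve is contractible in $\R^2\setminus\{c_1,c_2\}$, i.e. that it is a singular $2$-gon.
\begin{itemize}
\item If $\gamma|_{[t_1,t_2]}$ lies in $U_{eight}$, choose $\sigma$ so that the bounded region enclosed by $\gamma|_{[t_1,t_2]}\cup\sigma$ does not contain a whole lobe. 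This is possible because the exterior arc cannot separate a lobe from the other arc of $\partial U_{eight}$. The enclosed region then contains no centre.
\item If $\gamma|_{[t_1,t_2]}$ lies in a lobe $\Omega_j$, we use the homotopy constraint. The curve $\gamma$ is homotopic to the segment from $q$ to $c_1$, so it cannot wind around $c_j$. An arc inside $\Omega_j$ together with a suitable boundary arc therefore bounds a disc avoiding $c_j$. Otherwise $\gamma$ would enter $\Omega_j$, circle $c_j$, and leave, contradicting the segment class.
\end{itemize}

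Once a $2$-gon is found, we replace $\gamma|_{[t_1,t_2]}$ by $\sigma$ (or conversely). This gives a competitor in the same homotopy class with Jacobi length at most that of $\gamma$, by minimality of both arcs. We compare with $\mathcal{L}_h$ rather than $\mathcal{M}_h$, since $\mathcal{L}_h$ is additive and Proposition \ref{prop:maupertuis_properties} iii) applies. Equality of lengths would make the competitor a minimiser with a corner at $\gamma(t_1)$, contradicting the $C^2$ regularity of Proposition \ref{prop:maupertuis}. Strict inequality contradicts minimality of $\gamma|_{[t_1,t_2]}$ or of $\sigma$. Hence $\gamma$ meets $\gamma_{eight}$ at most once. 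The general statement for fixed-end minimisers from $q_1\in U_{eight}$ to $q_2$ in a bounded lobe follows by the same argument: the endpoints force an odd number of crossings, and we rule out two consecutive crossings exactly as above.

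The main obstacle is the topological step: showing that two consecutive crossings always produce a contractible bigon. The difficult cases are an arc inside a lobe that could encircle the centre, and crossings through the double point $p$. For the first we rely on the segment homotopy constraint, or on the prescribed class in the fixed-end case. For the second we perturb slightly, or treat $p$ as a crossing of both branches, after which it is counted as a single transverse intersection.
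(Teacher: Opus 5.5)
Your route is the paper's: transversality, then a contractible 2-gon between $\gamma$ and $\gamma_{eight}$ contradicts minimality by cut-and-paste. But the step you single out as the main obstacle is where your argument actually fails, and not for the reason you give.

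\textbf{The gap.} The exchange needs the closing arc $\sigma$ to be a sub-arc of the parametrised loop $\gamma_{eight}$. Only then does substituting $\gamma|_{[t_1,t_2]}$ for $\sigma$ keep the free class $[\al_1\al_2^{-1}]$ and give $\mathcal{L}_h(\sigma)\le\mathcal{L}_h(\gamma|_{[t_1,t_2]})$. For \emph{consecutive} crossings, the centre-free disc cut off by $\gamma|_{[t_1,t_2]}$ is bounded by an arc of $\partial\Omega_j$ or of $\partial U_{eight}$. That arc may pass through the double point $p$ and switch branches there.

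Here is a concrete configuration. Parametrise $\gamma_{eight}$ as the lobe-1 loop followed by the lobe-2 loop. Let $A$ be the branch through $p$ passing from lobe 1 to lobe 2, and $B$ the branch passing from lobe 2 to lobe 1. Suppose $\gamma$ (with $q$ close to $p$, so the path is in the segment class) does the following:
\begin{enumerate}
\item crosses $B$ just before $p$ into $\Omega_2$ and cuts off the corner of $\Omega_2$ at $p$;
\item exits across $A$ just after $p$ into $U_{eight}$;
\item enters $\Omega_1$ across the arc of $\partial\Omega_1$ that continues $B$ beyond $p$.
\end{enumerate}
For either consecutive pair, the two sub-arcs of $\gamma_{eight}$ between the crossing points close $\gamma|_{[t_1,t_2]}$ into a loop around $c_1$ or around $c_2$. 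For the first pair, the long sub-arc even contains the whole lobe-1 loop. The contractible closure is the corner arc through $p$, which jumps from $B$ to $A$ (or from $A$ to $B$) and is not a sub-arc of $\gamma_{eight}$.

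So both of your bullets fail; what you have is a geodesic triangle with a vertex at $p$. Minimality alone does not exclude it. Inserting $\gamma|_{[t_1,t_2]}$ into $\gamma_{eight}$ in place of the corner arc changes the free class. Minimality of $\gamma$ only gives $\mathcal{L}_h(\gamma|_{[t_1,t_2]})\le\mathcal{L}_h(\sigma)$, which is no contradiction.

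For the same reason, your opening claim that every arc of a lobe sub-loop is a fixed-end minimiser is false for arcs through $p$, since these have a corner. Perturbing $\gamma$, or counting a passage of $\gamma$ through $p$ twice, does not help: the defect sits in the boundary arc, not in $\gamma$.

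\textbf{The repair.} Pair crossings that lie on a common sub-arc of $\gamma_{eight}$, i.e.\ on the same branch at $p$, rather than consecutive ones. In the example, the first and third crossings both lie on $B$. The piece of $\gamma$ between them, together with the sub-arc of $\gamma_{eight}$ running along $B$ through $p$, bounds a disc with no centre inside; that disc contains the piece of $A$ between $p$ and the second crossing. This is a genuine singular 2-gon, and your exchange argument then works verbatim.

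A complete proof must show that such a pair always exists once there are at least two crossings. You can do this by a case analysis of how $\gamma$ moves among the four sectors at $p$, or with a Hass--Scott type bigon criterion for the pair $(\gamma,\gamma_{eight})$ (which may produce non-consecutive crossings). The paper's one-line proof (``two distinct transversal intersections determine a singular 2-gon'') passes over the same point. Your explicit restriction to consecutive crossings turns it into a step that is actually false.
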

\begin{proof}
    The proof is a direct consequence of the intersection properties of minimisers. Let $\gamma\colon[0,+\infty)\to\R^2\setminus\{c_1,c_2\}$ be a collision minimiser and consider the eight-shaped minimiser $\gamma_{eight}$. For any $[a,b]\subset [0,+\infty)$, the restriction $\gamma|_{[a,b]}$ is a collision-less minimiser of $\mathcal{M}_h$ among all $H^1$ paths joining $\gamma(a)$ and $\gamma(b)$. Therefore, all intersection between $\gamma|_{[a,b]}$ and $\gamma_{eight}$ are transversal by Proposition \ref{prop:intersection_minimisers}. Two distinct transversal intersections thus determine a singular 2-gon, contradicting Proposition \ref{prop:intersection_minimisers}. Clearly, the same holds for any fixed-end minimiser joining $q_1\in U_{eight}$ to a point $q_2$ in one of the bounded connected components of $\R^2\setminus\gamma_{eight}$.
\end{proof}
At this point, we establish the existence of collision minimisers. 
\begin{proposition}[Collision minimisers]\label{prop:collision_minimisers}
    Let $h$ satisfy \eqref{hyp:energy}, $q\in\R^2\setminus\{c_1,c_2\}$ and $c$ be one of the two centres. Then, there exists a collision minimiser of $\mathcal{M}_h$ joining $q$ to $c$.
\end{proposition}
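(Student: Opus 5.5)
We construct the collision minimiser as a limit of fixed-end minimisers whose endpoints approach the centre. Fix $q$ and $c=c_1$ (the case $c_2$ is symmetric), and choose a sequence $p_n\to c_1$ on the segment $[q,c_1]$; if $c_2\in[q,c_1]$, we first perturb the segment slightly as described above. For each $n$ we minimise $\mathcal{M}_h$ among $H^1$ paths joining $q$ to $p_n$ that are homotopic, relative to the endpoints, to the segment $[q,p_n]$. The coercivity and lower semicontinuity of Step 1 of Theorem \ref{thm:collisionless_fixed_end} give existence, and the local strong-force estimate of Step 2 gives absence of collisions. By Proposition \ref{prop:maupertuis}, each minimiser $\gamma_n$, reparametrised at energy $h$ by arc of time $t$, solves \eqref{eq:motion}--\eqref{eq:energy} on some $[0,T_n]$ with $\gamma_n(0)=q$ and $\gamma_n(T_n)=p_n$.

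Next we obtain uniform estimates, working with the Jacobi length $\mathcal{L}_h$, which these minimisers also minimise by Proposition \ref{prop:maupertuis_properties}. The straight segment gives the bound $\mathcal{L}_h(\gamma_n)\le\mathcal{L}_h([q,p_n])$. Near $c_1$ we have $\sqrt{h+V}\sim\sqrt{f_1}/|x-c_1|$, so this bound grows like $\log(1/|p_n-c_1|)$; it is finite for each $n$ but not uniform. The key step is to control $\gamma_n$ on a fixed compact time interval $[0,T]$. Energy conservation bounds $|\dot\gamma_n|^2=2(h+V(\gamma_n))$ wherever $\gamma_n$ stays away from the centres. The argument of Lemma \ref{lem:bound_collisions} (polar coordinates and the logarithmic lower bound) then shows that, before $\gamma_n$ first enters a small ball $B_\rho(c_1)$, its Euclidean length is bounded independently of $n$. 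Inside $B_\rho(c_1)$ the homotopy constraint forces zero winding, so a further use of Lemma \ref{lem:bound_collisions}-type estimates keeps $\gamma_n$ away from $c_2$ and prevents it from wandering. Proposition \ref{prop:eight} or Lemma \ref{lem:boundedness_minimisers} confines $\gamma_n$ to a fixed bounded region. Since $T_n\to\infty$ (reaching $p_n\to c_1$ costs time that diverges with the distance in $\log$ scale), Ascoli--Arzel\`a and a diagonal argument yield a $C^2_{loc}$ limit $\gamma\colon[0,\infty)\to\R^2\setminus\{c_2\}$ that solves \eqref{eq:motion}--\eqref{eq:energy}.

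The local minimality passes to the limit. For $[a,b]\subset[0,\infty)$, the restrictions $\gamma_n|_{[a,b]}$ are fixed-end minimisers in their relative homotopy class, by the argument of Proposition \ref{prop:local_minimisers}. Any competitor for $\gamma|_{[a,b]}$ can be glued to $\gamma_n$ near $\gamma_n(a)$ and $\gamma_n(b)$ by short segments of vanishing cost, so $\mathcal{M}_h$-minimality and the homotopy class to $[\gamma(a),\gamma(b)]$ persist, provided the limit is collision-free on $[a,b]$. That proviso holds because a collision at a finite time would force $\mathcal{M}_h=+\infty$ on a compact interval, by Step 2.

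The main obstacle is proving $\gamma(t)\to c_1$ as $t\to\infty$, rather than the limit oscillating or escaping. We would argue via the radial Lagrange--Jacobi identity. In the zone $B_r(c_1)$, with $V\approx f_1/\rho^2$, we have $\frac{d^2}{dt^2}\frac{\rho^2}{2}=|\dot x|^2+\langle x-c_1,\nabla V\rangle$, which is bounded below by a positive constant near $c_1$ since the $-2f_1/\rho^2$ term cancels against $2V$ and $h>-\inf V$. Hence $\rho$ cannot have interior local minima near $c_1$. Each $\gamma_n$ must nevertheless enter arbitrarily small balls, because its endpoint $p_n\to c_1$. Combined with the minimality of $\gamma_n$ restricted to segments leaving and re-entering $B_\rho(c_1)$, which would create a 2-gon with the radial segment and so contradict Proposition \ref{prop:intersection_minimisers}, each $\gamma_n$, once inside $B_\rho(c_1)$, approaches $c_1$ monotonically. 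This monotonicity passes to $\gamma$, and the Lemma \ref{lem:bound_collisions} logarithmic estimate then excludes a positive limit radius, giving $\gamma(t)\to c_1$.
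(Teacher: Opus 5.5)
\textbf{The time parametrisation is wrong, and the key compactness estimate is missing.} Your overall strategy matches the paper's: take fixed-end minimisers from $q$ to points $p_n\to c_1$, extract a diagonal limit, and pass minimality to the limit by lower semicontinuity. The problem is that you parametrise $\gamma_n$ by physical time and assert $T_n\to+\infty$. This is false for a singularity of order $|x-c_1|^{-2}$. What diverges like $\log(1/|p_n-c_1|)$ is the Jacobi length $\mathcal{L}_h(\gamma_n)$, not the time. In the model case $V=f/|x-c_1|^2$ with $f>0$ constant, Lagrange--Jacobi gives $\frac{d^2}{dt^2}\frac{|x-c_1|^2}{2}=2h$, so $|x-c_1|^2$ is a quadratic polynomial in $t$, $c_1$ is reached in finite time, and the $T_n$ stay bounded. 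So in physical time the $\gamma_n$ live on uniformly bounded intervals and there is no limit on $[0,+\infty)$. Your proviso that the limit is collision-free on every compact interval is exactly what fails: the limit hits $c_1$ at a finite time.

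Relatedly, you never prove the estimate that is actually needed: a lower bound on $|\gamma_n-c_1|$ on a fixed parameter window, uniform in $n$. Lemma \ref{lem:bound_collisions} does not apply, because it concerns paths with both endpoints on $\partial B_r$, whereas here the endpoint $p_n$ tends to $c_1$. A bound on the Euclidean length before entering $B_\rho(c_1)$ says nothing about how deep the curve goes afterwards. Without that lower bound you have neither equicontinuity (since $|\dot\gamma_n|^2=2(h+V(\gamma_n))$ is unbounded near $c_1$) nor the uniform bound on $\mathcal{M}_h(\gamma_n|_{[a,b]})$ needed to invoke Step 2 on the limit.

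\textbf{The missing idea is the paper's choice of parametrisation.} Reparametrise by Jacobi arc-length, $g_h(\dot\gamma_n,\dot\gamma_n)=1$, so that the parameter equals $\mathcal{L}_h$; this is also what makes $[0,+\infty)$ the natural domain. Then $T_n=\mathcal{L}_h(\gamma_n)\ge C\log(r/|p_n-c_1|)\to+\infty$. The same logarithmic estimate gives, for each $S>0$, some $\ve(S)>0$ with $|\gamma_n(s)-c_1|\ge\ve(S)$ on $[0,S]$. Indeed, reaching $B_\ve(c_1)$ from $\partial B_r(c_1)$ costs Jacobi length at least $C\log(r/\ve)$, and this cannot exceed $s\le S$. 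On the resulting annulus $|\dot\gamma_n|=(h+V(\gamma_n))^{-1/2}$ is uniformly bounded, which gives Ascoli--Arzel\`a and a collision-free limit on every $[0,S]$.

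\textbf{Your final step also needs repair.}
\begin{enumerate}
\item Convexity of $|x-c_1|^2$ rules out interior local maxima, not minima.
\item The sign of $|\dot x|^2+\langle x-c_1,\nabla V\rangle$ near $c_1$ is not determined by the hypotheses, since $\langle x-c_1,\nabla f_1\rangle/|x-c_1|^2$ is uncontrolled when $f_1$ is merely continuous at $c_1$.
\item A monotone radius with positive limit is not excluded by Lemma \ref{lem:bound_collisions}.
\end{enumerate}
The paper instead shows, in the arc-length parametrisation, that for each small $r$ there is $S_r$ with $\gamma_n([S_r,T_n])\subset B_r(c_1)$. The reason is that a minimiser cannot still lie outside $B_r(c_1)$ after arc-length $S_k\to+\infty$ when points of $\partial B_r(c_1)$ are at finite Jacobi distance from $q$. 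This yields $\gamma(s)\to c_1$.
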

\begin{proof}
    Assume $c=c_1=0$ and let $q\in\R^2\setminus\{c_1,c_2\}$. Let $(q_n)$ be a sequence $q_n\to 0$. Let $(\gamma_n)\subset H^1([0,1])$ be a sequence of fixed-end minimisers of the Jacobi-Maupertuis length
    \[
        \mathcal{L}_h(\gamma_n) = \int_0^1\lvert\dot{\gamma}_n(t)\rvert\sqrt{V(\gamma_n(t))+h}\,dt
    \]
    such that $\gamma_n(0)=q$, $\gamma_n(1)=q_n$. Such minimisers exist and are collision-less for any $n$ (see Section \ref{sec:fixed_end}); moreover, thanks to Proposition \ref{prop:eight}, they keep at positive distance from $c_2$. Reparametrise each $\gamma_n$ using the Jacobi-Maupertuis arc-length parameter $s$, i.e., in such a way that $g_h(\dot{\gamma}_n(s), \dot{\gamma}_n(s)) = 1$. Keeping the same notation for $\gamma_n$, we obtain $T_n=\mathcal{L}_h(\gamma_n)>0$ with $\gamma_n\colon[0, T_n]\to\R^2$. 
    
    We first claim that $T_n\to+\infty$ as $n\to+\infty$. To see that, fix $r\in(0, |q|)$ so that $c_2\not\in \bar{B}_r$. Since $q_n\to 0$, we have that $\gamma_n$ intersects $\partial B_r$ for $n$ large enough; defining 
    \[
        \sigma_n = \sup\{s\in [0, T_n]:\, |\gamma_n(s)|=r\}
    \]
    we then see that $\gamma_n([\sigma_n, T_n])\subset B_r\setminus\{0\}$. Write $\rho_n(s)=|\gamma_n(s)|$ and, recalling that $V(x)\sim\frac{f_1(x)}{|x|^2}$ as $x\to 0$, we have 
    \[
        \mathcal{L}_h(\gamma_n|_{[\sigma_n, T_n]})\ge C_1\int_{\sigma_n}^{T_n}\frac{|\dot{\gamma}_n(s)|}{|\gamma_n(s)|}\,ds,
    \]
    for some constant $C_1>0$. Since $|\dot{\rho}_n(s)|\le|\dot{\gamma}_n(s)|$, we obtain 
    \[
        \begin{aligned}
            \mathcal{L}_h(\gamma_n|_{[\sigma_n, T_n]})\ge C_1 \int_{\sigma_n}^{T_n}\frac{|\dot{\rho}_n(s)|}{\rho_n(s)}\,ds\ge C_1\left\lvert \int_{\sigma_n}^{T_n}\frac{\dot{\rho}_n(s)}{\rho_n(s)}\,ds\right\rvert = C_1 \log(r/|q_n|).
        \end{aligned}
    \]
    Since $q_n\to 0$ as $n\to+\infty$, the claim is proved. 
    
    Next, we claim that, for a fixed window of time, the minimizers do not get too close to the center. Fix $S>0$. We claim that there exist $\ve = \ve(S)>0$ and $n\in\N$ such that
    \begin{equation}\label{eq:claim}
        |\gamma_n(s)|\ge\ve\quad\forall s\in[0,S],\ \forall\ n\ge N. 
    \end{equation}
    Assume by contradiction that the claim is false. Then, there exist a subsequence $(\gamma_{n_k})$, points $s_k\in[0,S]$ such that 
    \[
    |\gamma_{n_k}(s_k)|\to 0,\quad s_k\to s^*\in[0, S],\ \text{as}\ k\to+\infty. 
    \]
    If $r\in(0, |q|)$, for large $k$ we have $|\gamma_{n_k}(s_k)| < r$. We define 
    \[
        \zeta_k = \sup\left\lbrace s\in[0, s_k]:\, |\gamma_{n_k}(s)|=r\right\rbrace
    \]
    so that $\gamma_{n_k}([\zeta_k, s_k])\subset B_r\setminus\{0\}$. Writing again $\rho_k(s)=|\gamma_{n_k}(s)|$ and arguing exactly as in the previous claim, we obtain
    \[
        \mathcal{L}_h(\gamma_{n_k}|_{[\zeta_k,s_k]}) \ge C_2\log(r/|\gamma_{n_k}(s_k)|)
    \]
    for some constant $C_2>0$ and so $\mathcal{L}_h(\gamma_{n_k}|_{[\zeta_k, s_k]})\to+\infty$ as $k\to+\infty$. Since $\gamma_{n_k}$ is parametrised over arc-length, we have that
    \[
        s_k = \mathcal{L}_h(\gamma_{n_k}|_{[0, s_k]})\ge \mathcal{L}_h(\gamma_{n_k}|_{[\zeta_k, s_k]}),
    \]
    but this gives a contradiction since $s_k\to\bar{s}\le S$ and so claim \eqref{eq:claim} is proved.
    
    Recalling that $S>0$ is fixed, we can define the compact annulus 
    \[
        A_S = \{x\in\R^2:\, \ve\le|x|\le |q|\}
    \]
    and, from the second claim, we know that $\gamma_n([0, S])\subset A_S$ for any $n\ge N$. Since $A_S$ is compact and does not intersect the singularities, there exist two positive constants $m_S, M_S$ such that 
    \[
        m_S\le V(x)+ h\le M_S\quad\forall x\in A_S.
    \]
    Now, recalling that $g_h(\dot{\gamma}_n(s), \dot{\gamma}_n(s)) =1$, we have
    \[
        |\dot{\gamma}_n(s)| = \frac{1}{\sqrt{V(\gamma_n(s))+ h}}\le \frac{1}{\sqrt{m_S}},
    \]
    for a.e. $s\in[0,S]$, and so the family $(\gamma_n)$ is equi-Lipschitz on $[0, S]$. Moreover, integrating on $[0, S]$, we also deduce a uniform bound on $\gamma_n(s)$ in $[0, S]$. Therefore, by Ascoli-Arzela theorem we can extract a sub-sequence converging uniformly in $[0, S]$ to a continuous limit $\gamma^S\colon[0, S]\to A_S$. Taking a sequence $S_k\to+\infty$ and using a diagonal argument, we can extend the limit to a continuous curve $\gamma\colon[0, +\infty)\to\R^2\setminus\{c_1,c_2\}$ such that $\gamma_n\to\gamma$ uniformly on each compact interval $[0,S]$. 

    Now, we fix $0\le a < b<+\infty$ and check that $\gamma|_{[a,b]}$ is a minimiser of $\mathcal{L}_h$ among all $H^1$ paths from $\gamma(a)$ to $\gamma(b)$. Each $\gamma_n$ minimises $\mathcal{L}_h$ among $H^1$ paths with fixed ends at $\gamma_n(a)$, $\gamma_n(b)$. Now, let $\psi\in H^1([a,b])$ such that $\psi(a)=\gamma(a)$ and $\psi(b)=\gamma(b)$. Consider a sequence $(\psi_n)$ with endpoints $\gamma_n(a), \gamma_n(b)$ such that $\mathcal{L}_h(\psi_n)\to\mathcal{L}_h(\psi)$. From the minimality of $\gamma_n$ we have $\mathcal{L}_h(\gamma_n|_{[a,b]})\le\mathcal{L}_h(\psi_n)$. Using Fatou lemma and uniform convergence on $[a,b]$, we have
    \[
        \mathcal{L}_h(\gamma|_{[a,b]})\le \liminf\limits_{n\to+\infty}\mathcal{L}_h(\gamma_n|_{[a,b]})\le\liminf\limits_{n\to+\infty}\mathcal{L}_h(\psi_n) = \mathcal{L}(\psi),
    \]
    hence $\gamma|_{[a,b]}$ is a minimiser of the Jacobi-Maupertuis length. Since collision-less minimisers of $\mathcal{L}_h$ are reparametrisations of minimisers of the Maupertuis functional, $\gamma$ is a local minimizer between its endpoints.
    
    Note that minimizers must enter an Euclidean ball of fixed small radius in a uniform time. More precisely, for any $r$ small enough there exists $S_r$ such that $\gamma_n ([S_r ,T_n]) \subseteq B_r$. Indeed, if we assume by contradiction that the claim is false, there exist sequences $n_k$ and $S_k\to \infty$ for which $\gamma_{n_k}(S_k) \notin B_r$. This implies that there exists at least a point of $\partial B_r$ at infinite distance from $q$ which is a contradiction. 
    This easily implies that  $\lim_{s\to+\infty}\gamma(s) = 0$ and concludes the proof. 
\end{proof}

\subsection{Realising bi-infinite sequences}
Let $h$ satisfy \eqref{hyp:energy} and consider the energy shell
\[
    \mathcal{E}_h = \left\lbrace(q,v)\in (\R^2\setminus\{c_1,c_2\})\times\R^2:\, \frac12 |v|^2 - V(q) = h\right\rbrace
\]
and consider $\psi_1, \psi_2$ collision minimisers of $\mathcal{M}_h$ respectively joining $c_1$ to a point in $\partial\mathcal{D}$ and $c_2$ to another point in the same boundary, so that $\psi_1$ and $\psi_2$ do not intersect. 
Define the set of all initial data which lie in one of the collision minimisers
\[
    \mathcal{T} = \left\lbrace (q,v)\in\mathcal{E}_h:\, q\in\psi_j,\ \text{for some}\ j\in\{1,2\}\right\rbrace.
\]
The set $\mathcal{T}$ is the union of two open cylinders $C_1$ and $C_2$, one of which can be decomposed in two connected components $C_i^\pm$ after removing from $\mathcal{T}$ the initial data $(\psi_i(t), \pm\dot{\psi}_i(t))$. We keep the same notation for $\mathcal{T}$ and we observe that now $\langle v, \dot{\psi}_i(t)^\perp\rangle \neq 0$ for any initial velocity in $\mathcal{T}$ at intersection points. 

Denote by $\Phi_t\colon\mathcal{E}_h\to\mathcal{E}_h$ the flow associated with the motion equation, so that each initial data $(q,v)$ is mapped to $\Phi_t(q,v)$ after a time $t$. 
Let us introduce the subset of $\mathcal{T}$ of all initial data coming back to $\mathcal{T}$ in finite time:
\[
    \Sigma = \left\lbrace (q,v)\in\mathcal{T}:\, \inf\{t>0:\, \Phi_t(q,v)\in\mathcal{T}\}<+\infty\right\rbrace
\]
so that, for any $(q,v)\in\Sigma$, its first return time $T(q,v)$ is well-defined. Inductively, taking $\Sigma^1 = \Sigma$, we also define
\[
    \Sigma^n = \left\lbrace (q,v)\in\Sigma^{n-1}: \, \inf\{t>0:\, \Phi_t(q,v)\in\mathcal{T}\}<+\infty\right\rbrace,\ n\ge 1.
\]
Consider the map $\mathcal{J}(q,v) = (q, -v)$ and define the set of initial data whose associated trajectories come back infinitely times, both in the future and in the past, namely
\[
    \Sigma^\infty = \cap_{n\ge 1} \left(\Sigma^n \cap \mathcal{J}(\Sigma^n)\right),
\]
which is non empty as a consequence of Corollary \ref{cor:collisionless}. Define also a first return map on $\Sigma^\infty$ which acts in this way:
\begin{equation}
	\label{eq:def_Phi}
    \Phi\colon\Sigma^\infty\to\Sigma^\infty,\quad (q,v)\mapsto \Phi(q,v)= \Phi_{T(q,v)}(q,v).
\end{equation}
To define the itinerary of a point $(q,v)\in\Sigma^\infty$, we note that, initially, $q$ belongs to one of the collision curves $\psi_j$, and the same holds for the projection on the configuration space of $\Phi(q,v)$. Each trajectory with initial data in $\Sigma^\infty$ is defined on the whole $\R$ and so intersect transversally the curves $\psi_j$ infinitely many times. Therefore, we can associate to such trajectory a bi-infinite sequence
\[
    (s_i)_{i\in\Z},\quad s_i\in S=\{\pm 1, \pm 2\},
\]
which represents the sequence of signed transversal intersections with $\psi_1$ and $\psi_2$ (where the orientation is given by the generators $\al_1$ and $\al_2$, see Figure \ref{fig:generators}). Recall that the Bernoulli shift map $\sigma\colon S^\Z\to S^\Z$ such that $\sigma(s_i) = (s_{i+1})$ acts by shifting forward the whole sequence. In this way, we have naturally defined an equivariant map 
\begin{equation}\label{eq:projection_map}
    \pi\colon\Sigma^\infty\to S^\Z,\quad \text{such that}\ \pi\circ\Phi = \sigma\circ\pi. 
\end{equation}

Recalling the statement of Theorem \ref{thm:coding}, we are interested in sub-shift of finite type (see \cite[Chapter 2]{LiMa95}) including only those sequences $(s_i)_{i\in\Z}$ which satisfy these two conditions
\begin{enumerate}
    \item there are no repetitions of the same symbol of length $\ge k$ in the sequence
    \item there are no patterns of the form $(\pm 1, \mp 1)$ or $(\pm 2, \mp 2)$
    \item there exist $i\neq j$ such that $|s_i|= 1$ and $|s_j|= 2$
\end{enumerate}
which correspond to the minimisers of $\mathcal{M}_h$ found in Corollary \ref{cor:collisionless}. We denote by $X$ the closure, with respect to the metric of the Bernoulli shift, of the set of the sequences satisfying the two conditions above. The following result is well-known and a complete proof can be found in \cite{BarCan}. 

\begin{lemma}
    The set $(X, \sigma)$ is a chaotic sub-system of $(S^\Z, \sigma)$ and $X$ is homeomorphic to a Cantor set. 
\end{lemma}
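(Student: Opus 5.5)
We have to show three things about $X$: it is shift-invariant and closed, it is homeomorphic to a Cantor set, and $\sigma|_X$ is chaotic in Devaney's sense (transitive, with dense periodic points, and hence with sensitive dependence). The plan is to recast $X$ as a subshift of finite type and then check each property directly.

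\textbf{Finite type.} Conditions (1) and (2) are local. Forbidding the $k+1$ constant blocks $(s,\ldots,s)$ and the four words $(\pm1,\mp1)$, $(\pm2,\mp2)$ gives a finite list $\mathcal F$ of forbidden words. Condition (3) is not closed, because it only asks that both letters occur somewhere. However, the closure of the sequences avoiding $\mathcal F$ and satisfying (3) equals the full set $X_{\mathcal F}$ of sequences avoiding $\mathcal F$:
\begin{itemize}
\item the constant-modulus sequences allowed by (1)--(2), such as $\ldots 1\,1\ldots$ with blocks of length $\le k$, are impossible anyway, since a block of length at most $k$ of equal modulus forces a sign change, which is forbidden;
\item given any $x\in X_{\mathcal F}$, one can modify $x$ far from the centre by inserting a symbol of the other modulus, obtaining sequences satisfying (3) that converge to $x$.
\end{itemize}
So $X=X_{\mathcal F}$, which is closed in the product topology and $\sigma$-invariant. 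Equivalently, $X$ is the vertex shift of the directed graph $G$ whose vertices are the pairs (symbol, current run length $\le k$) and whose edges encode the allowed successors.

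\textbf{Transitivity and density of periodic points.} The key step is to show that $G$ is irreducible, i.e.\ that any allowed word $u$ can be joined to any allowed word $w$ by an allowed bridge. After a letter $\pm1$ we may append $+2$ or $-2$ freely, and similarly after $\pm2$. Hence the bridge can always be built by alternating moduli, for example $\ldots,a,\ \epsilon 2,\ \epsilon' 1,\ldots$ with signs chosen freely, ending in a letter compatible with the first letter of $w$. The bridge has bounded length, at most $3$ letters. Irreducibility then gives the two properties directly:
\begin{itemize}
\item \emph{periodic points are dense}: for any cylinder $[u]$, the periodic sequence $(u\,b)^\infty$, with $b$ a bridge from $u$ back to itself, lies in $[u]$;
\item \emph{transitivity}: concatenate all allowed words with bridges to get a point whose forward orbit is dense.
\end{itemize}
Since $X$ is infinite, these two properties imply sensitivity by Banks et al.

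\textbf{Cantor set.} $X$ is compact and metrizable, being closed in $S^{\Z}$, and it is totally disconnected as a subspace of $S^{\Z}$. It remains to show that $X$ has no isolated points. Every allowed central word $u$ admits at least two distinct allowed continuations, for instance appending $+2$ or $-2$ after a letter of modulus $1$. Therefore every cylinder contains at least two points of $X$, and by Brouwer's characterisation $X$ is homeomorphic to a Cantor set.

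The main obstacle is the bookkeeping around condition (3) and the closure operation, that is, making sure that the closure is exactly the SFT $X_{\mathcal F}$. Once that identification is made, the rest is the standard theory of irreducible, non-trivial subshifts of finite type.
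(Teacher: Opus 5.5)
Your proof is correct. The paper gives no argument for this lemma and only refers to \cite{BarCan}, so the comparison is between a citation and an actual proof. Your proposal is a self-contained version of the standard route:
\begin{itemize}
\item identify $X$ with an irreducible subshift of finite type;
\item obtain dense periodic points and a dense forward orbit from the bridging property;
\item get sensitivity from Banks et al.;
\item get the Cantor structure from Brouwer's characterisation (compact, metrizable, totally disconnected, perfect).
\end{itemize}

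\textbf{Keep the key observation.} The most useful point in your write-up is that conditions (1)--(2) already force (3). If a sequence has constant modulus, then (2) makes it constant, and (1) forbids that. So the closure in the paper's definition of $X$ is superfluous, and $X$ is literally the subshift of finite type $X_{\mathcal F}$. This is also what Theorem \ref{thm:coding} implicitly needs when it asserts a semi-conjugacy with a sub-shift of finite type.

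\textbf{Minor simplifications.}
\begin{itemize}
\item Your second bullet, approximating by inserting symbols, is vacuous: no point of $X_{\mathcal F}$ fails (3). You can drop it.
\item The bridge needs at most one letter. It is empty if the last letter of $u$ and the first letter of $w$ have different moduli. Otherwise it is a single letter of the other modulus.
\end{itemize}

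\textbf{Reading of condition (1).} You are right to read (1) as in Definition \ref{def:admissible_k}, forbidding runs longer than $k$. With the phrasing ``length $\ge k$'' used just before the lemma, $k=1$ would give $X=\emptyset$, and the lemma would fail.
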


As a first step, we have a preliminary lemma which shows that any periodic sequence in $X$ is realised by a collisionless periodic Maupertuis minimiser. The proof follows directly from Lemma \ref{lem:compactness_minimisers}.
\begin{lemma}\label{lem:periodic_sequences} 
    Fix $h$ satisfying \eqref{hyp:energy}, $k\in\N$ and consider a periodic $k$-admissible sequence $(s_i)_{i\in\Z}\in X$ of period $m\ge 2$, generated by a finite admissible sequence $(s_1,\ldots,s_m)\in\{\pm1,\pm2\}^m$. Then, there exists a collisionless periodic minimiser of the Maupertuis functional $\gamma\in\mathcal{H}(s_1,\ldots,s_m)$, whose itinerary coincides with $(s_i)_{i\in\Z}$. 

    Moreover, there exists a constant $C=C(h,k)>0$ independent on the sequence such that 
    \[
        \min_{t\in\cerchio}|\gamma(t)- c_j|\ge C,\quad \forall\,j=1,2.
    \]
\end{lemma}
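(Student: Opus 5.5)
Given the periodic $k$-admissible sequence with generating block $(s_1,\ldots,s_m)$, I will take the free homotopy class $[\gamma_{s_1,\ldots,s_m}]$ from \eqref{def:homotopy_classes}. Condition (3) of Definition \ref{def:admissible_k} ensures the word contains both generators, so the class is non-contractible and not a power of a single $\al_i$. Condition (2) ensures the word is reduced, including cyclically, since the periodic bi-infinite sequence forbids $(\pm1,\mp1)$ across the junction $s_m s_1$. Corollary \ref{cor:collisionless} then gives a collisionless minimiser $\gamma\in\hat{\mathcal{H}}(s_1,\ldots,s_m)$, which by Proposition \ref{prop:maupertuis} is, after rescaling time, a periodic solution at energy $h$.

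The distance bound is Lemma \ref{lem:compactness_minimisers}. Every block of identical symbols in the word has length at most $k$, so $\gamma$ winds around each centre at most $k$ times on each sub-arc inside the small balls. The constant $C(h,k)$ produced there depends only on $h$, $k$ and the local data $f_j,W_j,r$, not on $m$ or on the particular word. The same lemma, together with Lemma \ref{lem:boundedness_minimisers}(i), places the support of $\gamma$ in $\mathcal{D}$, which is needed for the itinerary to be read off $\Gamma_1\cup\Gamma_2=\psi_1\cup\psi_2\subset\mathcal{D}$.

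The step I expect to be delicate is showing that the itinerary of $\gamma$ equals the periodic sequence. My plan is as follows.

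\emph{Transversality.} The arcs $\psi_j$ are collision minimisers and $\gamma$ is a minimiser. Both solve \eqref{eq:motion} at the same energy, so every intersection is transversal by uniqueness for the ODE, exactly as in Proposition \ref{prop:intersection_minimisers}.

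\emph{Combinatorial model.} The cut $\R^2\setminus(\psi_1\cup\psi_2)$, with $\psi_1,\psi_2$ joining the centres to $\partial\mathcal{D}$, turns the sequence of signed crossings of a loop into a word in $\al_1^{\pm1},\al_2^{\pm1}$ representing its free homotopy class. Hence the cyclic crossing word of $\gamma$ represents $[\gamma_{s_1,\ldots,s_m}]$.

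\emph{Reducedness.} I must show the crossing word is cyclically reduced, i.e.\ $\gamma$ never crosses some $\psi_j$ and then immediately crosses it back with the opposite sign. Such a pair, together with the segment of $\psi_j$ between the two crossing points, would bound a disk free of centres. So $\gamma$ and the minimising arc $\psi_j$ would form a singular 2-gon, which is excluded by the argument of Propositions \ref{prop:intersection_minimisers} and \ref{prop:eight}: otherwise one could exchange sub-arcs and obtain a non-smooth minimiser.

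\emph{Conclusion.} In the free group, a cyclically reduced word is determined by its conjugacy class up to cyclic shift. Therefore the crossing sequence equals $(s_1,\ldots,s_m)$ up to shift, and after choosing the time origin the bi-infinite itinerary of $\gamma$ is $(s_i)_{i\in\Z}$.

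The main obstacle will be making the 2-gon exclusion against $\psi_j$ rigorous. The difficulty is that $\psi_j$ is only a half-infinite local minimiser ending at a collision, so I would localise the exchange argument to a compact sub-arc of $\psi_j$ on which it is a genuine fixed-end minimiser.
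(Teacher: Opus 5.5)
Your proposal is correct and follows the paper's route. Existence comes from Corollary~\ref{cor:collisionless}, the uniform bound from Lemma~\ref{lem:compactness_minimisers}, and the itinerary from the fact that minimisers are in minimal position with respect to the collision arcs $\psi_1,\psi_2$. The paper only asserts this last point, in the proof of Theorem~\ref{thm:coding}, and your no-bigon/cyclically-reduced-word argument makes it explicit. Two small remarks: your combinatorial model tacitly uses what the paper states explicitly, namely that $\psi_1,\psi_2$ are homotopic to disjoint segments, so that a crossing of $\psi_j$ really reads as $\al_j^{\pm1}$; and the localisation you flag as the main obstacle is automatic, since every compact sub-arc of a collision minimiser is a fixed-end minimiser by definition.
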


Now we show that every sequence in $X$ is realised by a collisionless trajectory and thus prove Theorem \ref{thm:coding}.

\begin{proof}[Proof of Theorem \ref{thm:coding}]
    Fix $h$ satisfying \eqref{hyp:energy}, $k\in\N$ and let $(s_i)_{i\in\Z}$ be a bi-infinite $k$-admissible sequence in $X$. If the sequence is periodic, Lemma \ref{lem:periodic_sequences} guarantees the existence of a minimiser intersecting $\psi_1, \psi_2$ with the order given by the sequence and thus realising it. Note that, since $\psi_1$ and $\psi_2$ are defined as limits of minimisers homotopic to non-intersecting line segments and each minimiser given by Corollary \ref{cor:collisionless} realises the minimum number of intersections in its homotopy class, this number coincides with the period of the sequence and the signs are uniquely determined by the homotopy class and the choice of the orientations. 

    Fix now a non-periodic sequence $(s_i)_{i\in\Z}\in X$. Thanks to Lemma \ref{lem:periodic_sequences}, we can consider a sequence of periodic minimisers $(\gamma_k)$ of period $2T_k>0$ of $\mathcal{M}_h$ approximating the sequence. This means that the itineraries of $(\gamma_k)$ converge locally to the bi-infinite sequence $(s_i)_{i\in\Z}$, i.e., for any $r\in\N$, there exists $k(r)$ such that, for any $k\ge k(r)$, the itinerary of $\gamma_k$ contains the finite block 
    \[
        x_r = (s_{-r}, s_{-r+1}, \ldots, s_{r-1}, s_r).
    \]
    For any $r\in\N$, one can define the limiting minimal time necessary for $(\gamma_k)$ to realise $x_r$ as
    \[
        a_r = \lim_{k\to+\infty} \inf_{T>0}\{T:\,\ \text{the itinerary of }\gamma_k|_{[-T,T]}\ \text{contains}\ x_r\}.
    \]
    With the same argument as in \cite[Lemma 6.11]{BarCan}, one can prove that 
    \begin{itemize}
        \item $\lim_{k\to+\infty}T_k\to+\infty$
        \item $a_r<a_{r+1}<+\infty$ for any $r\in\N$
        \item $\lim_{r\to+\infty} a_r = +\infty$
    \end{itemize}
    and that there exist a sequence $\ve_n\to 0$ and a subsequence $(\gamma_{k_n})$ of $(\gamma_k)$ such that the restrictions $\eta_n = \gamma_{k_n}|_{[-a_r - \ve_n, a_r + \ve_n]}$ satisfy
    \begin{itemize}
        \item $\eta_n(a_r + \ve_n)\in\psi_{s_r}$ and $\eta_n(a_r - \ve_n)\in\psi_{s_{|-r|}}$
        \item each $\eta_n$ is a minimiser of $\mathcal{M}_h$ between its endpoints
        \item the itinerary of $\eta_n$ contains the block $x_r$
        \item the sequence $(\eta_n)$ is bounded in $H^1$ and admits a uniformly convergent subsequence
        \item the uniform limit $\eta^r\colon[-a_r, a_r]\to\mathcal{D}$ is itself a minimiser of $\mathcal{M}_h$.
    \end{itemize}
    Now, assume by contradiction that $\eta^r$ has a collision at time $t_0\in(-a_r, a_r)$ with $c_j$. Then, the uniform convergence $\eta_n\to\eta^r$ forces $\eta_n$ to enter arbitrarily small neighbourhoods of $c_j$ for $n$ large. On the other hand, each $\eta_n$ is a restriction of a periodic minimiser which realises the finite admissible block $x_r$. Hence, by Lemma \ref{lem:periodic_sequences} we reach a contradiction and $\eta^r$ is collisionless. Since all curves remain in a compact subset of $\mathcal{D}\setminus\{c_1,c_2\}$, standard regularity arguments imply that $\eta_n\to\eta^r$ in $C^2$ on $[-a_r, a_r]$. 
    
    Iterating this procedure and refining further the subsequence, one can obtain a limiting curve $\eta^{r+1}$ defined on a larger interval $[-a_{r+1}, a_{r+1}]$ which contains the symbolic block $x_{r+1}$ and whose restriction to $[-a_r, a_r]$ coincides with $\eta^r$ and thus contains the itinerary $x_r$. Since $a_r\to+\infty$, a diagonal argument yields a sequence which converges $C^2$ on arbitrarly large compact intervals to a limit $\gamma\colon\R\to\mathcal{D}\setminus\{c_1,c_2\}$ and the proof is concluded.

\end{proof}

The image of the map $\pi$ defined in \eqref{eq:projection_map} thus contains the set $X$. We conclude the section showing that this implies the existence of a compact invariant set $\Sigma_X$ for the map $\Phi$ defined in \eqref{eq:def_Phi}.
\begin{corollary}
    The map $\pi$ defined in \eqref{eq:projection_map} is continuous and its image contains $X$. Therefore, there exists a compact invariant subset $\Sigma_X\subset\Sigma^\infty$ on which $(\Sigma_X, \Phi)$ is topologically semi-conjugated to the chaotic system $(X, \sigma)$. 
\end{corollary}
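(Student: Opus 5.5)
Set $\Sigma_X = \pi^{-1}(X)$, where $\pi$ is the itinerary map of \eqref{eq:projection_map}. The plan is to prove three things in turn: $\pi$ is continuous, $\Sigma_X$ is compact and $\Phi$-invariant, and $\pi(\Sigma_X)=X$.

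\textbf{Continuity of $\pi$.} Take $(q,v)\in\Sigma^\infty$ and a cylinder neighbourhood in $S^\Z$, fixed by the finite block $(s_{-r},\dots,s_r)$ of $\pi(q,v)$. The trajectory through $(q,v)$ crosses $\psi_1\cup\psi_2$ transversally at finitely many times in some compact window $[-T,T]$. These crossings stay away from the centres on that window, since the trajectory is collisionless. Continuous dependence of the flow $\Phi_t$ on initial data holds on compact time intervals away from the singularities. Combined with the implicit function theorem at each transversal crossing, it shows that nearby initial data in $\Sigma^\infty$ cross $\psi_1,\psi_2$ the same number of times, in the same order and with the same signs, on a slightly perturbed window. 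One must also rule out new crossings appearing. By compactness of $[-T,T]$ and transversality, any extra intersection would have to come from a tangency or an approach to the endpoints of $\psi_j$. Tangencies are excluded by openness of transversality. For an approach to a centre (an endpoint of $\psi_j$), the trajectory itself would have to come arbitrarily close to $c_j$, which the flow continuity on a collisionless segment forbids. So nearby points have the same central block, and $\pi$ is continuous.

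\textbf{Invariance and compactness of $\Sigma_X$.} Invariance follows from equivariance: $\pi\circ\Phi=\sigma\circ\pi$, and $X$ is closed and shift-invariant. For compactness we use Lemma \ref{lem:compactness_minimisers} and the construction in the proof of Theorem \ref{thm:coding}. The relevant orbits lie in $\mathcal{D}$, uniformly away from the centres by $C(h,k)$. Hence their points lie in a compact set $K\subset\mathcal{D}\setminus\{c_1,c_2\}$, and their velocities are bounded by the energy relation. Take a sequence in $\Sigma_X$. After extracting a subsequence it converges in $\mathcal{E}_h$, and the limit lies on $\psi_1\cup\psi_2$ since these are closed away from the centres. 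The limit velocity is transversal, because limits of solutions are solutions (as in the $C^2$ convergence of that proof) and intersections of solutions with $\psi_j$ are transversal (Proposition \ref{prop:intersection_minimisers}). Continuity of $\pi$ then gives that the limit lies in $\Sigma^\infty$ with itinerary in $X$. Strictly, we restrict $\Sigma_X$ to the trajectories arising as limits of minimisers, the ones built in the proof of Theorem \ref{thm:coding}, so that the uniform bound applies. The closure of this family is still compact, $\Phi$-invariant, and still maps onto $X$.

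\textbf{Surjectivity, and the main obstacle.} Surjectivity is exactly Theorem \ref{thm:coding}. A semi-conjugacy then follows: $\pi|_{\Sigma_X}$ is a continuous surjection intertwining $\Phi$ and $\sigma$. The main obstacle is the compactness step, namely making sure the closure of the constructed family stays uniformly away from the centres and inside $\Sigma^\infty$. I would handle it by passing limits through Lemma \ref{lem:compactness_minimisers}. The key point is that local uniform limits of minimisers are again minimisers, hence taut, with winding at most $k$ around each centre.
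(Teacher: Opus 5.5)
Your final set is the paper's: $\Sigma_X=\bar A$, the closure of the crossing data of the minimal orbits built in the proof of Theorem~\ref{thm:coding}. Surjectivity likewise comes from that theorem. Your implicit-function argument at transversal crossings is a more careful version of the paper's one-line appeal to continuous dependence.

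The gap is in the passage from $\pi^{-1}(X)$ to this closure, and it has two parts.
\begin{enumerate}
\item Invariance no longer follows from $\pi\circ\Phi=\sigma\circ\pi$. Equivariance shows that $\pi^{-1}(X)$ is invariant. Whether $\Phi$ maps $\bar A$ into $\bar A$ depends on $\Phi$ being continuous at the limit points, which you do not address.
\item The sentence ``continuity of $\pi$ then gives that the limit lies in $\Sigma^\infty$'' is circular. $\pi$ is only defined on $\Sigma^\infty$, and $\Sigma^\infty$ need not be closed: along a convergent sequence the first return times $T(q_n,v_n)$ may blow up, and then the limit orbit need never come back to $\mathcal T$.
\end{enumerate}
Uniform distance from the centres and minimality of limits are the right ingredients. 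However, you never turn them into a statement that excludes this.

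What is missing is the pivot of the paper's proof: a uniform bound $\sup_{(q,v)\in A}T(q,v)\le T$ on first return times over the constructed family. Note that $A$, the set of all crossing data of the constructed orbits, is itself $\Phi$-invariant. Given $(q_n,v_n)\in A$ with $(q_n,v_n)\to(\bar q,\bar v)$, extract $T(q_{n_k},v_{n_k})\to\bar T$. Continuity of the flow then gives $\Phi(q_{n_k},v_{n_k})\to\Phi_{\bar T}(\bar q,\bar v)\in\bar A$.

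Your transversality remark finishes the job. The limit orbit is a solution distinct from $\psi_j$, because it stays away from the centres, so every crossing with $\psi_j$ is transversal. This shows that $\bar T>0$ and that $\bar T$ is the first return time of $(\bar q,\bar v)$, hence $\Phi(\bar A)\subset\bar A$. Iterating in both time directions gives $\bar A\subset\Sigma^\infty$, and only then does continuity of $\pi$ give $\pi(\bar A)\subset X$.

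The paper merely asserts this bound. One way to prove it:
\begin{itemize}
\item An arc of a constructed orbit that does not meet $\psi_1\cup\psi_2$ lies in the simply connected slit region $\mathcal D\setminus(\psi_1\cup\psi_2)$.
\item By minimality in its relative homotopy class, its Jacobi length is at most the finite intrinsic Jacobi diameter of that region with small discs around $c_1,c_2$ removed.
\item In physical time that length is at least $\sqrt2\,\inf(h+V)$ times the duration of the arc, which bounds the duration.
\end{itemize}

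Finally, your continuity argument only treats the endpoint of $\psi_j$ at $c_j$. At an orbit passing through the other endpoint, on $\partial\mathcal D$, a small perturbation can gain or lose a crossing, so that case also needs a word; the paper is silent on it too.
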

\begin{proof}
 The continuity of the map $\pi$ is a straightforward consequence of the continuous dependence of initial data. For the existence of the invariant set, we can reason as follows.
 Defined the set $A\subset \mathcal{T}$ as the set of initial conditions of all the minimal trajectories realizing an itinerary in $X\subseteq \mathcal{S}^\Z$. We set $\Sigma_X = \bar{A}$ and show that $\Sigma_X$ is invariant. 
 
 First of all note that if $(q_n,v_n)\in \mathcal{T}$ are initial conditions in $A$, then there exists $T$ such that
 \[
 \sup_n\inf_{t>0} \left\{t:\,\Phi_t(q_n,v_n)\in \mathcal{T}\right \}\le T.
 \]
 So, if we assume $(q_n,v_n)\to (\bar q,\bar v)\in \bar A$, by compactness there is a subsequence $n_k$ for which $T_{n_k} \to \bar T$ and by continuity of the flow it holds that $\Phi_{T_n}(q_n,v_n)\to \Phi_{\bar T}(\bar q, \bar v)\in \bar A$. Thus, we have shown that for any $(q,v)\in \bar{A}$, $\Phi(q,v)\in \bar A$ as well and so $\Sigma_X$ is invariant. 
\end{proof}
\begin{remark}
	If the Jacobi-Maupertuis metric $g_h(v,v) = (h+V(x))|v|^2$ at energy $h$ has  negative curvature $\kappa_J$, one can prove that the above semi-conjugation is actually a conjugation as an application of the Gauss-Bonnet theorem. Precise details can be found in \cite[Theorem 6.7, Lemma 6.12]{BarCan}. 
	
	It should be noted that for the non-perturbed ($\mathcal{R}=0$) relativistic two center problem this computation is straightforward.  Having to deal with just superpositions of homogenous potentials, the curvature is indeed negative. From standard formulas for conformal changes of the metric it holds that
	\begin{align*}
	\kappa_J = \frac{-\Delta \log(h+V)}{h+V}& =-2m c^2\frac{\Delta \log((h_{\mathrm{rel}}+V_{\mathrm{rel}})^2-m^2c^4)}{(h_{\mathrm{rel}}+V_{\mathrm{rel}})^2-m^2c^4} \\&=-2m c^2\frac{\Delta \log(h_{\mathrm{rel}}-mc^2+V_{\mathrm{rel}})+\Delta \log(h_{\mathrm{rel}}+mc^2+V_{\mathrm{rel}})}{(h_{\mathrm{rel}}+V_{\mathrm{rel}})^2-m^2c^4} 
	\end{align*}
	where we used that the effective potential $h + V = ((V_\mathrm{rel}+h_\mathrm{rel})^2-(mc^2)^2)/2mc^2$.
	If we are able to give a sign to terms of the form $\Delta \log (h' +V_\mathrm{rel})$ for $h'>0$ we are then able to give a sign to the whole curvature. For the specific choice of $V_\mathrm{rel}$ we have
	\[
	-\Delta \log(h'+V_{\mathrm{rel}})=\frac{\vert\nabla V_\mathrm{rel}\vert^2-(h'+V_{\mathrm{rel}})\Delta V_{\mathrm{rel}}}{(h'+V_{\mathrm{rel}})^2}
	\]
	The laplacian is easily computed in polar coordinates and the follow formula holds
	\[
	(h'+V_{\mathrm{rel}})\Delta V_{\mathrm{rel}}= \left(h'+\frac{\mu_1}{\vert q-c_1\vert}+\frac{\mu_2}{\vert q-c_2\vert}\right)\left(\frac{\mu_1}{\vert q-c_1\vert^3}+\frac{\mu_2}{\vert q-c_2\vert^3}\right)
	\]
	and for the gradient part on has
	\[
	\vert\nabla V_\mathrm{rel}\vert^2=\frac{\mu_1^2}{\vert q-c_1\vert^4}+\frac{\mu_2^2}{\vert q-c_2\vert^4} + \frac{\mu_1\mu_2\langle q-c_1,q-c_2\rangle }{\vert q-c_1\vert^3\vert q-c_2\vert^3}
	\]
	Thus, after some cancellations, we have that
	\[
	-(h'+V_{\mathrm{rel}})^2\Delta \log(h'+V_{\mathrm{rel}})= - h' \left(\frac{\mu_1}{\vert q-c_1\vert}+\frac{\mu_2}{\vert q-c_2\vert}\right) - \frac{\mu_1\mu_2}{\vert q-c_1\vert \vert q-c_2\vert}R(q)
	\]
	where $R(q)= \frac{1}{\vert q-c_1\vert^2}+\frac{1}{\vert q-c_2\vert^2}-\frac{\langle q-c_1,q-c_2\rangle }{\vert q-c_1\vert^2\vert q-c_1\vert^2}$. It is however clear that by Cauchy-Schwarz inequality
	\[
	R(q)\ge \frac{1}{\vert q-c_1\vert^2}+\frac{1}{\vert q-c_2\vert^2}-\frac{1 }{\vert q-c_1\vert\vert q-c_2\vert} \ge \frac{1}{\vert q-c_1\vert\vert q-c_2\vert}>0
	\]
\end{remark}

\section{Scattering and trapped trajectories}\label{sec:scattering}

In this final section we prove Theorems \ref{thm:scattering_bi} and \ref{thm:trapped}. The aim is to construct entire solutions of the motion and energy equations with prescribed symbolic behaviour and asymptotic directions at infinity. 

More precisely, scattering solutions of Theorem \ref{thm:scattering_bi} will be obtained as limits of fixed-end minimisers whose endpoints diverge to infinity along assigned directions. Using the compactness properties established in the previous sections, we will prove that such minimisers converge, up to subsequences, to entire collisionless trajectories connecting two asymptotic directions.

Similarly, trapping solutions of Theorem \ref{thm:trapped} will be constructed by combining the compactness arguments used in the proof of Theorem \ref{thm:coding} with the asymptotic analysis of scattering trajectories. In this way, we obtain entire solutions which are asymptotic to infinity in one time direction and realise a prescribed positively infinite symbolic itinerary in the other.

\subsection{Proof of Theorem \ref{thm:scattering_bi}}

We begin by establishing uniform control on trajectories starting sufficiently far from the interaction region $\mathcal{D}$. In particular, we show that solutions of the motion and energy equations with large initial radius behave asymptotically like free scattering trajectories: they are globally defined, remain far from the origin, and admit well-defined asymptotic direction and velocity. The following lemma provides uniform quantitative estimates on this behaviour and will be used repeatedly in the compactness argument for sequences of minimisers with diverging endpoints.
\begin{lemma}\label{lem:flow}
    There exist $K, a_0, C_1, C_2 >0$ such that, for any solution $\eta$ of the motion equation \eqref{eq:motion} at energy $h$ satisfying \eqref{hyp:energy}, with $|\eta(0)| = \rho > K$ and $\langle \dot{\eta}, \eta \rangle \ge 0$, we have
    \begin{enumerate}[label=\roman*)]
        \item $\eta$ is defined on the whole $[0, +\infty)$ and $|\eta(t)|\ge \sqrt{a_0 t^2 + \rho^2}$ for any $t\in[0, +\infty)$
        \item there exists $\lim\limits_{t\to+\infty} \frac{\eta(t)}{|\eta(t)|} = e^{i\vt_\infty}$, for some $\vt_\infty\in\cerchio$ 
        \item there exists $\lim\limits_{t\to+\infty} \dot{\eta}(t) = v_\infty$, with $v_\infty = \sqrt{2h}e^{i\vt_\infty}$ and $\left\lvert v_\infty - \dot{\eta}(0)\right\lvert\le \frac{C_2}{\rho^{\beta -1}}$.
    \end{enumerate}
\end{lemma}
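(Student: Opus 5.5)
The approach is the standard Lagrange–Jacobi (virial) argument for asymptotically free motions, followed by integrating the decay of $\nabla V$ along the escaping trajectory. Set $I(t)=\tfrac12|\eta(t)|^2$. Using \eqref{eq:motion} and \eqref{eq:energy},
\[
\ddot I = |\dot\eta|^2+\langle \eta,\nabla V(\eta)\rangle = 2h+2V(\eta)+\langle \eta,\nabla V(\eta)\rangle .
\]
Assumption \eqref{hyp:infinity} gives $|\langle \eta,\nabla V(\eta)\rangle|\le C|\eta|^{1-\beta}$, which is small for $|\eta|\ge K$. Assumption \eqref{hyp:energy} gives $h+V\ge h+\inf V>0$. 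Hence, choosing $K$ large, $\ddot I\ge a_0>0$ on the set $\{|\eta|\ge K\}$. I also note that $V$ has a limit $V_\infty$ at infinity, because $|\nabla V|\lesssim|x|^{-\beta}$ is integrable along rays and $\langle\nabla V,Jx\rangle$ is integrable along circles. With the normalisation $V_\infty=0$, which is implicit in the statement $|v_\infty|=\sqrt{2h}$, $a_0$ can be taken close to $2h$.

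For i), I use a continuity (bootstrap) argument. As long as $|\eta|\ge K$ on $[0,t]$, we have $\dot I(0)=\langle\eta,\dot\eta\rangle(0)\ge0$ and $\ddot I\ge a_0$, so
\[
I(t)\ge I(0)+\tfrac{a_0}{2}t^2 .
\]
After renaming $a_0$, this gives $|\eta(t)|^2\ge \rho^2+a_0t^2>K^2$, so the set of admissible times is open and closed and covers the whole maximal interval. Global existence then follows: the trajectory stays away from the centres, and the velocity is bounded by energy since $V$ is bounded away from the centres, so no blow-up can occur.

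For iii), since $|\nabla V(\eta(t))|\le C(\rho^2+a_0t^2)^{-\beta/2}$ is integrable on $[0,\infty)$, the limit $v_\infty=\lim \dot\eta$ exists. Its deviation from the initial velocity is bounded by
\[
|v_\infty-\dot\eta(0)|\le\int_0^\infty C(\rho^2+a_0t^2)^{-\beta/2}\,dt .
\]
Substituting $t=\rho s$ turns this into $\rho^{1-\beta}\int_0^\infty C(1+a_0s^2)^{-\beta/2}\,ds$, which is finite precisely because $\beta>1$. This gives $C_2\rho^{1-\beta}$. Letting $t\to\infty$ in the energy equation gives $|v_\infty|^2=2(h+V_\infty)=2h$. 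In particular $v_\infty\ne0$, and then $\eta(t)=\eta(0)+\int_0^t\dot\eta = t v_\infty+o(t)$. This yields ii) with $e^{i\vartheta_\infty}=v_\infty/|v_\infty|$, which is also the direction appearing in iii).

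The main obstacle is making the lower bound on $\ddot I$ uniform in $\rho$ and independent of the trajectory. This requires controlling $V$ near infinity, specifically $h+V\ge h+\inf V$ together with the smallness of the virial term. It also requires justifying that $V$ tends to a constant, so that the speed limit is exactly $\sqrt{2h}$. Here I would first integrate $\nabla V$ along rays and use the bound on $\langle \nabla V, Jx\rangle$ to remove the angular dependence of the limit. Once $\ddot I\ge a_0$ is secured, the remaining steps are straightforward integrations.
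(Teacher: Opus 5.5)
Your proof is correct. Part i) and the existence of $v_\infty$ with $|v_\infty-\dot\eta(0)|\le C_2\rho^{1-\beta}$ follow the paper: convexity of $\tfrac12|\eta|^2$ on $\{|\eta|\ge K\}$ via the Lagrange--Jacobi identity, then integration of $|\ddot\eta|\le C(\rho^2+a_0t^2)^{-\beta/2}$ after the substitution $t=\rho s$. You are more explicit than the paper on why $\ddot I\ge a_0$ holds uniformly (through $h+\inf V>0$) and why the solution is global.

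The real difference lies in ii) and in identifying the direction of $v_\infty$. The paper proves ii) first, through the angular momentum $\psi=\langle\dot\eta,J\eta\rangle$: the angular part of \eqref{hyp:infinity} keeps $\psi$ bounded, so $\dot\vartheta=\psi/|\eta|^2\in L^1$. It then shows that $v_\infty$ is parallel to $e^{i\vartheta_\infty}$ by contradiction, since a transverse component would force $\dot\vartheta\ge\varepsilon/\sqrt{a_0t^2+\rho^2}$, which is not integrable. You reverse the order: once $\dot\eta\to v_\infty\neq0$, averaging gives $\eta(t)/t\to v_\infty$ and ii) is immediate.

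Your route has three advantages. It is shorter. It uses only the bound on $|\nabla V|$; even the existence of $V_\infty$ follows from it, since $|\langle\nabla V(x),Jx\rangle|\le C|x|^{1-\beta}$ already makes the oscillation of $V$ on circles vanish. And it shows that $v_\infty$ is a \emph{positive} multiple of $e^{i\vartheta_\infty}$. The paper's contradiction argument only excludes a transverse component and leaves both the sign and the modulus implicit, whereas positive alignment together with iii) is exactly what is used later in \eqref{eq:profile}.

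Your observation that $|v_\infty|=\sqrt{2h}$ requires the normalisation $V_\infty=0$ is also well taken. In general your argument gives $|v_\infty|^2=2(h+V_\infty)$, which is positive by \eqref{hyp:energy}, and that is all ii) needs. For the relativistic reduction, for instance, $V$ tends to a positive constant at infinity. What the paper's angular-momentum computation offers in exchange is an explicit bound on the total angular excursion $|\vartheta_\infty-\vartheta(0)|$ in terms of $\psi(0)/\rho$, but that bound is not part of the statement.
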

\begin{proof}
    To prove $i)$, from the motion and energy equation we have 
    \[
        \frac{d^2}{dt^2}\left(\frac{|\eta(t)|^2}{2}\right) = \langle\eta(t), \ddot{\eta}(t)\rangle + |\dot{\eta}(t)|^2 = \langle\eta(t), \nabla V(\eta(t))\rangle + 2(h + V(\eta(t)))
    \]
    and so, from the assumption at infinity \eqref{hyp:infinity}, there exist $K>0$ and $a_0, a_1>0$ such that, as long as $|\eta(t)|>K$ we have
    \[
        a_1 \ge \frac{d^2}{dt^2}\left(\frac{|\eta(t)|^2}{2}\right)\ge a_0.
    \]
    In particular, the function $|\eta(t)|^2/2$ is convex and, if $\langle \dot{\eta}(0), \eta(0)\rangle >0$, it is also increasing and defined for all times $t\ge 0$. A comparison principle implies that 
    \[
        |\eta(t)|^2\ge |\eta(0)|^2 + a_0 t^2 + \langle\dot{\eta}(0), \eta(0)\rangle t \ge \rho^2 + a_0 t^2.
    \]
    Now, to prove $ii)$ we use the motion equation. We call $\frac{\eta(t)}{|\eta(t)|}= e^{i\vt(t)}$ and we compute
    \[
        \frac{d}{dt} e^{i\vt(t)} = i\dot{\vt}(t) e^{i\vt(t)} = \frac{\dot{\eta}(t)}{|\eta(t)|} - \frac{1}{|\eta(t)|} \langle \dot{\eta}(t), e^{i\vt(t)}\rangle e^{i\vt(t)}
    \]
    so that
    \[
        \dot{\vt}(t) = \left\langle \frac{d}{dt} e^{i\vt(t)}, \frac{J\eta(t)}{|\eta(t)|}\right\rangle = \frac{1}{|\eta(t)|^2}\left\langle\dot{\eta}(t), J\eta(t)\right\rangle. 
    \]
    Moreover, we have
    \[
        \frac{d}{dt}\left(\langle\dot{\eta}(t), J\eta(t)\rangle\right) = \langle\dot{\eta}(t), J\dot{\eta}(t)\rangle + \langle\ddot{\eta}(t), J\eta(t)\rangle = \langle\nabla V(\eta(t)), J\eta(t)\rangle. 
    \]
    From \eqref{hyp:infinity}, we then deduce that the function $\psi(t) = \langle\dot{\eta}(t), J\eta(t)\rangle$ has a limit $\psi(+\infty)$ as $t\to+\infty$. In particular, from the fundamental theorem of calculus and point $i)$, we deduce
    \[
        |\psi(+\infty) - \psi(t)| \le \int_t^{+\infty} \frac{C}{\rho^\beta (a_0 (\tau/\rho)^2 + 1 )^{\beta/2}}\,d\tau,\ \forall\, t\ge 0. 
    \]
    Using the change of variable $d\tau = \rho ds$, we see that there exists $c_3>0$ such that
    \[
        |\psi(+\infty) - \psi(t)| \le \frac{1}{\rho^{\beta-1}}\int_0^{+\infty} \frac{C}{(a_0 s^2 + 1)^{\beta/2}}\,ds \le \frac{c_3}{\rho^{\beta-1}},
    \]
    and so, in particular, $\psi\in L^1(0,+\infty)$. Now, write $\vt(t) = \vt(0) + \int_0^t \dot{\vt}(\tau)\, d\tau$ and we have just seen that $\dot{\vt}=\frac{\psi(t)}{|\eta(t)|^2}\in L^1(0, +\infty)$, so there exists $\vt_\infty=\lim_{t\to+\infty}\vt(t)$.  Thus, assertion $ii)$ follows.  However, more can be said, indeed
    \[
        |\vt_\infty - \vt(0)| \le \int_0^{+\infty}\frac{|\psi(\tau)|}{|\eta(\tau)|^2}\,d\tau \le \int_0^{+\infty} \frac{|\psi(0)| + \frac{c_3}{\rho^{\beta-1}}}{\rho^2(a_0 (\tau/\rho)^2 + 1)}\,d\tau
    \]
    with the same change of variable as above, we have
    \[
        |\vt_\infty - \vt(0)| \le \frac{1}{\rho}\int_0^{+\infty} \frac{|\psi(0)| + \frac{c_3}{\rho^{\beta-1}}}{a_0 s^2 + 1}\,ds.
    \]
     Thus, noting that $\vert\psi(0)\vert \le \rho$ and so $\vert \vt(0) - \vt_\infty \vert \le \frac{\pi}{2 \sqrt{a_0}}  $ as $\rho\to+\infty$. 

    Finally, to prove $iii)$, we use again the motion and energy equations. First of all, since $\ddot{\eta} = \nabla V(\eta)$, from \eqref{hyp:infinity} and $i)$ we deduce that
    \[
        |\ddot{\eta} (t)|\le \frac{C}{|\eta(t)|^\beta} \le \frac{C}{(a_0 t^2 + \rho^2)^{\beta/2}},\quad\forall\,t\ge 0
    \]
    and so $\ddot{\eta}\in L^1(0, +\infty)$. Therefore, from the fundamental theorem of calculus we deduce the existence of $v_\infty = \lim_{t\to+\infty} \dot{\eta}(t)$. In particular, using again the change of variable $ds = \rho d\tau$, we have that 
    \[
    \begin{aligned}
        |v_\infty - \dot{\eta}(0)| \le \int_0^{+\infty}|\ddot{\eta}(\tau)|\,d\tau \le C \int_0^{+\infty} \frac{d\tau}{\rho^{\beta}(a_0 (\tau/\rho)^2 + 1)^{\beta/2}} &= \frac{C}{\rho^{\beta-1}} \int_0^{+\infty}\frac{ds}{(a_0 s^2 + 1)^{\beta/2}}  \\
        & = \frac{c_3}{\rho^{\beta-1}}
    \end{aligned}.
    \]
    Now, assume by contradiction that $v_\infty$ is not parallel to $e^{i\vt_\infty}$ and so that $|\langle v_\infty, J e^{i\vt_\infty}\rangle| \ge\ve$ for some $\ve>0$. From $ii)$ and what we have just proved, we can assume also that, for large $t$, $\lvert\langle\dot{\eta}(t), Je^{i\vt(t)}\rangle \lvert\ge \ve$. As in the proof of $ii)$, we write $\frac{\eta(t)}{|\eta(t)|} = e^{i\vt(t)}$ and we compute
    \[
        \dot{\vt}(t) = \frac{1}{|\eta(t)|}\langle \dot{\eta}(t), \frac{J\eta(t)}{|\eta(t)|}\rangle \ge \frac{\ve}{|\eta(t)|}\ge \frac{\ve}{\sqrt{a_0 t^2 + \rho^2}},
    \]
    the last inequality coming from point $i)$. However, in the proof of $ii)$ we have seen that $\dot{\vt}\in L^1(0, +\infty)$, which leads to a contradiction since the last term of the inequality is not integrable on $[0, +\infty)$. 

\end{proof}

At this point, we are ready to give the proof of Theorem \ref{thm:scattering_bi}.

\begin{proof}[Proof of Theorem \ref{thm:scattering_bi}]
    Fix $h$ satisfying \eqref{hyp:energy}, two asymptotic directions $\vt^-, \vt^+\in[0, 2\pi)$, $n\ge 2$ and a finite $(n+1)$-admissible sequence $s_1,\ldots,s_n$. Let $R_0>0$ be such that the compactness and fixed-end results of Section \ref{sec:fixed_end} apply uniformly for endpoints outside $B_{R_0}$. For every $R>R_0$ we consider the line $\ell^-$ tangent to the ball of radius $R$ at the point $Re^{i\vt^-}$ and the line $\ell^+$ tangent to the ball of radius $R$ at the point $Re^{i\vt^+}$.  For any couple of points $q^\pm \in \ell^\pm$ we consider a minimisers $\gamma_R^{q^-,q^+}$ with prescribed itinerary $(s_1,\ldots, s_n)$ and define $\gamma_R$ as a minimizer satisfying
    	\[
    	\mathcal{L}_{h}(\gamma_R) \le \mathcal{L}_h (\gamma_R^{q^-,q^+}), \quad \forall q^\pm \in \ell^\pm.
    	\]
    In other words, $\gamma_R$ is a minimizers between paths joining the lines $\ell^\pm$ and prescribed itinerary $(s_1,\ldots, s_n)$.
    
   	We parametrise each minimiser by physical time, so that 
    \[
        \frac12|\dot{\gamma}_R(t)|^2 - V(\gamma_R(t)) = h.
    \]
    Let $K > \max\{|c_1|, |c_2|\} + 1$. Since each minimiser $\gamma_R$ has itinerary $(s_1,\ldots,s_m)$, it necessarily enters the compact region containing the centres and thus it intersects $\partial B_K$. By time-translation invariance of the motion equation, we can assume that $|\gamma_R(0)|=K$.  Since the lines $\ell^\pm$ lie outside the ball of radius $R$, the minimizer $\gamma_R$ eventually enters $B_R$. We now prove that the time spent inside the ball $B_R$ diverges as $R\to+\infty$. Let $t_R^+>0$ be such that 
    \[
        |\gamma_R(t_R^+)|=R,\quad |\gamma_R(t)|<R,\ \forall\,t\in[0,t_R^+).
    \]
    We observe that the Jacobi-length of $\gamma_R$ connecting $\partial B_R$ and $\partial B_K$ goes to $+\infty$ as $R\to+\infty$. Indeed, if we write $r(t) = |\gamma_R(t)|$ we have    
    \[
    \begin{aligned}
        \mathcal{L}_h(\gamma_R|_{[0, t_R^+]})=\int_{0}^{t_R^+} |\dot{\gamma}_R(t)|\sqrt{V(\gamma_R(t))+h}\,dt&\ge C_0\int_{0}^{t_R^+} |\dot{r}(t)| \,dt, \\
        &\ge C_0 |r(t_R^+)- r(0)| = C_0 |R-K|.
    \end{aligned}
    \]
    since $V(x)+h$ is uniformly bounded from below by a constant $C_0>0$.
   Since $\gamma_R(t)$ is parametrised by physical time it satisfies the energy equation 
    \[
        |\dot{\gamma}_R(t)|^2 = 2(h+ V(\gamma_R(t))).
    \]
    Moreover, thanks to Lemma \ref{lem:compactness_minimisers}, the minimisers $\gamma_R$ stay away from the centers, so there exists a constant $C_1>0$ such that 
    \[
    C_0 |R- K|\le \mathcal{L}_h(\gamma_R|_{[0, t_R^+]}) = \int_0^{t_R^+} \sqrt{2}(h + V(\gamma_R(t)))\,dt\le C_1 t_R^+.
    \]
    Hence, the time spent inside the ball $B_R$ goes to infinity as $R\to+\infty$. The same argument works if applied backward in time. 
    
    Therefore, for any $n>0$, the restriction $\gamma_R|_{[-n,n]}$ is well-defined for $R$ large enough. 
    Using Lemma \ref{lem:compactness_minimisers}, we know that all $\gamma_R$ stay at uniformly bounded distance from the centres. Thus, from the energy equation, we know that 
    \[
        |\dot{\gamma}_R|^2 = 2h + 2V(\gamma_R) \le 2h + C 
    \]
    and so the family $\gamma_R$ is pre-compact in the $C^0([-n,n])$ topology, for any $n\in\N$. 
    Now, from the motion equation in $[-n,n]$, we get uniform bounds on the derivatives $\dddot{\gamma}_R$, which implies the pre-compactness in $C^2([-n,n])$. With a straightforward application of the Ascoli-Arzela theorem and a diagonal argument, we get the $C_{loc}^2$ convergence of the sequence $\gamma_R$ to a limit $\gamma_\infty\colon\R\to\R^2$.
    
    We now show that the family $\gamma_R$ satisfies the assumption of Lemma \ref{lem:flow}.  We focus on the $+\infty$-case the proof of the analogous property at $-\infty$ goes through the same argument.  Each minimiser $\gamma_R$ has to cross the ball $\partial B_K$ thanks to the topological constraint we imposed. Let $t^+_R\to+\infty$ such that $|\gamma_R( t^+_R)| = R$. A Lagrange-Jacobi type inequality as in the proof of Lemma \ref{lem:flow} shows that $\langle\gamma_R(t^+_R), \dot{\gamma}_R(t^+_R)\rangle >0$. Note in addition that, by construction, the curve $\gamma_R$ is orthogonal to $\ell^+$ when it intersects it. This means that there exists a time $T^+_R\ge t^+_R$ for which $\dot{\gamma}_R(T^+_R)$ is parallel to $e^{i \theta^+}$. In particular, from point $i)$ of Lemma \ref{lem:flow}, these minimisers are defined in the whole $[0,+\infty)$ and satisfy (here the Landau notation refers to the limit as $R\to \infty$)
    \begin{equation}\label{eq:profile}
    	\lim_{t\to+\infty}\dot{\gamma}_R(t) = \dot{\gamma}_R(+\infty)= \sqrt{2h} e^{i\vt^+} + O(R^{1-\beta}), \quad 
    	\lim_{t\to+\infty} \frac{\gamma_R(t)}{|\gamma_R(t)|} = \gamma_R(+\infty) =  e^{i\vt^+} + o(1).
    \end{equation}
	
	Now we show that $\dot{\gamma}_\infty(+\infty) = \sqrt{2h}e^{i\vt^+}$.  We can write
    \[
        \dot{\gamma}_\infty(t)= \dot{\gamma}_\infty(t)- \dot{\gamma}_R(t)+ \dot{\gamma}_R(t)- \dot{\gamma}_R(+\infty)+ \dot{\gamma}_R(+\infty)
    \]
    Take an interval $(-n,n)$ large enough so that $|\gamma_\infty(t)|=1/\ve$ for some $t\in(0,n)$ and any $\ve>0$. Then, uniform $C^2$ convergence implies that, for all $R>R(\ve)$ 
    \[
        \left\lvert \dot{\gamma}_\infty(t)- \dot{\gamma}_R(t) \right\rvert\le \ve\quad\text{and}\quad \left\lvert \lvert \gamma_\infty(t)\rvert - \lvert \gamma_R(t)\rvert \right\rvert \le \ve.
    \]
    Therefore, we have that 
    \[
        \left\lvert \dot{\gamma}_\infty(t)-  \sqrt{2h}e^{i\vt^+}\right\rvert \le \vert  \dot{\gamma}_\infty(t)- \dot{\gamma}_R(t)\vert+ \vert \dot{\gamma}_R(t)- \dot{\gamma}_R(+\infty) \vert+ \left\lvert \dot{\gamma}_R(+\infty)  - \sqrt{2h} e^{i\vt^+}\right\rvert\le \ve +O(\ve^{\beta-1})
    \]
    since $\ve$ is aribitraty and the limit velocity exists, we get the claim. We conclude, thanks to point $iii)$ of Lemma \ref{lem:flow}, that $\lim_{t\to\infty} \gamma_\infty(t)/\vert \gamma_\infty(t)\vert = e^{i\vt^+}$.

    The uniform convergence on compact sets, together with Lemma \ref{lem:compactness_minimisers}, implies that the limit $\gamma_\infty$ has itinerary coded by the sequence $s_1,\ldots, s_m$ and the proof is concluded. 
\end{proof}

\subsection{Proof of Theorem \ref{thm:trapped}}
We now prove the existence of trapping trajectories with prescribed positively infinite itineraries. The argument combines the two approximation procedures developed in the previous sections. On the one hand, we use the scattering construction of Theorem \ref{thm:scattering_bi} in the past, prescribing an asymptotic direction at $-\infty$ and letting one endpoint diverge to infinity. On the other hand, we approximate the prescribed positively infinite symbolic sequence by longer and longer finite admissible blocks, exactly as in the proof of Theorem \ref{thm:coding}. More precisely, for each finite truncation of the itinerary we consider a minimiser with one endpoint constrained to the tangent line to the sphere at the point $Re^{i\vartheta^-}$ and the other fixed inside the compact region $\mathcal D$. The compactness results established in Sections \ref{sec:compactness} and \ref{sec:fixed_end} imply that these minimisers converge, up to subsequences, in the $C^2_{\mathrm{loc}}$ topology to a global solution of \eqref{eq:motion}--\eqref{eq:energy}. The limiting trajectory inherits the prescribed asymptotic direction in the past and realises the whole positively infinite itinerary.

\begin{proof}[Proof of Theorem \ref{thm:trapped}]
    Fix $h$ satisfying \eqref{hyp:energy}, $\vt^-\in[0,2\pi)$, $k\in\N$ and a $k$-admissible positively infinite sequence $(s_i)_{i\in\N}\subset \{\pm 1, \pm 2\}^\N$. Consider the point $q=(0, K)$, $R>0$ and the family of approximating sequences of length $m\in\N$, $\sigma_m = (s_i)_{i=1}^m$.  Let $\gamma_R^m$ be a family of fixed-end minimisers connecting the tangent line to the sphere at the point $Re^{i\vartheta^-}$ and $q$, and realising the sequence $\sigma_m$. Reparametrise each $\gamma_R^m$ so that $|\gamma_R^m(0)| = K$ and 0 is the first instant of time spent inside the ball $B_K$ for any $\gamma_R^m$. We are going to pass to the limit as $R\to+\infty$ and $m\to+\infty$; therefore, we may assume $R=m$ and we simply call the sequence of minimisers $(\gamma_m)$. With the same proof of Theorem \ref{thm:scattering_bi}, one shows that the family $\gamma_m$ converges $C_{loc}^2$ on compact sets to a limit $\gamma_\infty\colon\R\to\R^2$. From Lemma \ref{lem:boundedness_minimisers}, we know that 
    \[
    \begin{cases}
        |\gamma_\infty(t)|>K \quad\text{for all } t<0 \\
        |\gamma_\infty(t)|<K \quad\text{for all } t>0
    \end{cases}
    \]
    and for this reason we can repeat the proof of Theorem \ref{thm:scattering_bi} to show that $\gamma_\infty(t)/|\gamma_\infty(t)|\to e^{i\vt^-}$ as $t\to-\infty$. Similarly, one sees that $\gamma_\infty$ agrees with the infinite sequence $(s_i)_{i\in\N}$ using the same argument as in the proof of Theorem \ref{thm:coding}, this time working only with positively infinite sequences. 
\end{proof}

\section*{Data availability}
No datasets were generated or analysed during the current study.

\section*{Acknowledgements}
The authors are grateful to Alberto Boscaggin for many valuable discussions on the topics addressed in this paper.

\section*{Funding}
The authors acknowledge the support of the INdAM--GNAMPA Research Group. The first author also acknowledges financial support from the Alexander von Humboldt Foundation.

\bibliography{references}
\bibliographystyle{plain}

\medskip

\noindent
S. Baranzini\\
 Universit\`a  San Raffaele di Roma\\
Via di Val Cannuta 247, 00166 Roma, Italy\\
\vspace{-0.2cm}

\noindent
Ruhr-Universit\"at Bochum\\
Universit\"atsstra\ss e 150, 44801 Bochum S\"ud, Germany \\
\texttt{stefano.baranzini@uniroma5.it}

\vspace{0.8cm}

\noindent
G. M. Canneori \\
Dipartimento di Matematica ``Giuseppe Peano'', Universit\`a degli Studi di Torino\\
Via Carlo Alberto 10, 10123 Torino, Italy\\
\texttt{gianmarco.canneori@unito.it}

\end{document}